\documentclass[12pt,a4paper]{amsart}
\usepackage[top=32mm, bottom=32mm, left=32mm, right=32mm]{geometry}
\usepackage{mathptmx}
\usepackage{mathrsfs}
\usepackage{amscd}
\usepackage{verbatim}
\usepackage{url}
\usepackage[all]{xy}
\usepackage{color}
\usepackage[colorlinks=true,citecolor=blue]{hyperref}
\usepackage{tikz}
\usepackage{amsfonts}
\usepackage{bbm}
\usepackage{graphicx}
\usepackage{amsmath}
\usepackage{amssymb}
\usetikzlibrary{arrows}
\usetikzlibrary{patterns}
\usetikzlibrary{lindenmayersystems,backgrounds}
\linespread{1.5}

\theoremstyle{plain}

\newtheorem{thm}{Theorem}[section]
\newtheorem{lem}[thm]{Lemma}
\newtheorem{prop}[thm]{Proposition}
\newtheorem{cor}[thm]{Corollary}
\newtheorem{example}[thm]{Example}

\theoremstyle{definition}
\newtheorem{defn}[thm]{Definition}
\newtheorem{rem}[thm]{Remark}

%\sloppy
\newcommand{\Z}{{\mathbb{Z}}}
\newcommand{\N}{\mathbb{N}}

\def \C {\mathbb C}

\begin{document}

\title[Structures of $R(f)-\overline{P(f)}$ for graph maps $f$]{Structures of $R(f)-\overline{P(f)}$ for graph maps $f$}

\author[J.~Mai]{Jiehua Mai}
\address{School of Mathematics and Quantitative
Economics, Guangxi University of Finance and Economics, Nanning, Guangxi, 530003, P. R. China \&
 Institute of Mathematics, Shantou University, Shantou, Guangdong, 515063, P. R. China}
\email{jiehuamai@163.com; jhmai@stu.edu.cn}

\author[E.~Shi]{Enhui Shi}
\address{School of Mathematics and Sciences, Soochow University, Suzhou, Jiangsu 215006, China}
\email{ehshi@suda.edu.cn}

\author[K.~Yan]{Kesong Yan*}
\thanks{*Corresponding author}
\address{School of Mathematics and Quantitative
Economics, Guangxi University of Finance and Economics, Nanning, Guangxi, 530003, P. R. China}
\email{ksyan@mail.ustc.edu.cn}

\author[F.~Zeng]{Fanping Zeng}
\address{School of Mathematics and Quantitative
Economics, Guangxi University of Finance and Economics, Nanning, Guangxi, 530003, P. R. China}
\email{fpzeng@gxu.edu.cn}

\subjclass[2010]{Primary 37E25; Secondary 37B20, 37C25, 54H20.}
\keywords{graph map, recurrent point, periodic point, eventually periodic point,
 minimal set.}

\date{Oct. 18, 2023}

\begin{abstract}
Let $G$ be a graph and $f: G\rightarrow G$ be a continuous map.
We establish a structure theorem which describes the structures of the set $R(f)-\overline{P(f)}$, where $R(f)$ and $P(f)$ are
the recurrent point set and the periodic point set of $f$ respectively. Roughly speaking,
the set  $R(f)-\overline{P(f)}$ is covered by finitely many pairwise disjoint $f$-invariant open sets $U_{1\,},\,\cdots,\,U_{n\,}$; each $U_i$ contains a unique minimal set $W_i$ which absorbs each point of $U_i$; each $W_i$ lies in finitely many pairwise disjoint circles each
of which is contained in a connected closed set; all of these connected closed sets are contained in $U_i$ and permutated cyclically by $f$. As applications of the structure theorem, several known results are improved or
reproved.
\end{abstract}

\maketitle

%%%%%%%%%%%%%%%%%%%%%%%%%%%%%%%%%%%%%%%%%%%%%%%%%%%%%%%%%%%%%%%%%%%%%%%%%%%%%%%%%%%%%%%%%%%%%%%%%%%%%%%%%%%%%%%%%%%%%%%%%%%%%%%%%%%%%%%%%%%%%%%%%%%%%%%%%%%%%%%%%%%%%
\section{Introduction and prelimilaries}

In this section, we will state the main theorem obtained. We will also introduce the backgrounds of the study, the notions and notations used in the paper, and the organizations of the paper.

\subsection{Backgrounds and the aim of the paper}
The study of the dynamics of graph maps can date back to the work of A. Blokh in 1980's (\cite{Bl86, Bl87, Blo87}). Since then, lots of literatures appeared in this area. One may consult \cite{ALM} for a systematic introduction to the combinatorial dynamics and chaotic
phenomena for graph maps before 2000 and consult \cite{AGG, AJM, GL, HKO, LLO, LOYZ, LS, RS} and their references for later investigations.
\medskip

Recurrence is one of the most fundamental notions in the theory of dynamical system. For a compact metric space $X$ and a continuous
map $f:X\rightarrow X$, there are several $f$-invariant subsets of $X$ which exhibit various recurrence behaviors, such as the periodic point set $P(f)$, the almost periodic point set $AP(f)$, and the recurrent point set $R(f)$. The $\omega$-limit point set $\omega(f)$ and
 the nonwandering point set $\Omega(f)$ are also $f$-invariant and exhibit some weaker dynamical behaviors than recurrence. The structures of these sets have been intensively studied during the development of dynamical systems. One may consult \cite{Ak, Fu} for the introductions to the abstract theory of recurrence and its applications in number theory.
\medskip

Due to the simplicity of the phase spaces in topology, finer and more interesting results around recurrence can be obtained for
graph maps. Blokh constructed the spectral decomposition of the sets $\overline{P(f)},\ \omega (f)$ and
$\Omega(f)$ for any graph map $f$, and obtained a series of
applications of the spectral decomposition (\cite{Bl95}). In
\cite{CH} and \cite{Sar},\, the authors showed that $\overline
{R(f)}=\overline{P(f)}$ \, for any interval map $f$. This was extended by Ye
 to tree maps (\cite{Ye}). For any graph map $f$, the
authors proved that \,$\overline{R(f)}=R(f)\cup\overline{P(f)}$\;\!
and \,$\overline{R(f)}=\overline{AP(f)}$\;\! (\cite{Haw, MS2}). We
suggest the readers to refer to \cite{AHNO, Bl15, MSh, MS4} for the study of recurrence for maps on phase spaces beyond graphs.
\medskip

The aim of this paper is to study the structure of\,
\vspace{0.5mm} $R(f)-\overline{P(f)}$\, for any graph map $f:G\to
G$. We will show that $R(f)-\overline{P(f)}$ is contained in
$G-\overline{EP(f)}$ which has only finitely many connected components\, $U_{1\,},\,\cdots,\,U_{n\,}$, where $EP(f)$ is the eventually
periodic point set of $f$. Then we describe the dynamical
behavior of $f$ \;\!on the intersection of each $U_i$
with $R(f)$\;\!. In the last of this section, we will give an explicit statement of the main theorem.

%%%%%%%%%%%%%%%%%%%%%%%%%%%%%%%%%%%%%%%%%%%%%%%%%%%%%%%%%%%%%%%%%%%%%%%%%%%%%%%%%%%%%%%%%%%%%%%%%%%%%%%%%%%%%%%%%%%%%%%%%%%%%%%%%%%%%%%%%%%%%%%%%%%%%%%%%%%%%%%%%%%%%
\subsection{Notions and notations}

Let $(X,d)$ be a metric space with metric $d$. For any\;\! $Y\subset
X$, \,denote by\, $\mbox{Int}_X(Y)$, \,$\partial_XY$, and\,
$\mbox{Clos}_X(Y)$ the interior, \,the boundary, and the closure
of\, $Y$ in $X$, respectively. If there is no confusion, we also
write \,$\overline Y$ for\, $\mbox{Clos}_X(Y)$. For any $y\in
Y\!\subset X$ and any\;\! $r>0$,\;\! write\, $B(y,r)=\{x\in
X:\,d(x,y)<r\}$\, and \,$B(Y,r)=\{x\in X:\,d(x,Y)<r\}$.

\medskip By a dynamical system, we mean a pair $(X, f)$,
where $X$ is a compact metric space and $f:X\rightarrow X$ is a
continuous map. Denote by \;\!$C^{\;\!0}(X)$\;\! the set of all
continuous maps from $X$ to $X$. Let\;\! $\Bbb N$ \;\!be the set
of all positive integers, \,and let\;\! $\Bbb Z_+=\Bbb
N\cup\{0\}$. For any \;\!$n\in\Bbb N$\;\!, \,write \;\!$\Bbb
N_n=\{1,\cdots , n\}$. For any $f\in C^{\;\!0}(X)$\;\!,\, let
$f^{\;\!0}$ be the identity map of $X$, and let $f^{\;\!n}=f\circ
f^{\;\!n-1}$ be the composition map of $f$ and $f^{\;\!n-1}$.
For $x\in X$, the set \;\!$O(x,f)\equiv\{f^{\,n}(x): n\in \Bbb Z_+\}$ \,is called the
{\it{orbit}}\, of \;\!$x$\;\! under $f$. A
point $x\in X$ is called a {\it{fixed point}} \,of $f$ if
\;\!$f(x)=x$\;\!;  is called a {\it{periodic
point}}\, of $f$ if $f^{\,n}(x)=x$ \,for some $n\in\N$\;\!;  is called an {\it{eventually periodic point}} \,of
$f$ if the orbit \;\!$O(x,f)$ \;\!is a finite set;\,
is called a {\it{non-wandering point}} \,of $f$ if for any
neighborhood \,$U$ of \;\!$x$\;\! in $X$ there is an\;\! $n\in
\Bbb N$\;\! such that $f^{n}(U)\cap U\neq\,\emptyset$\;\!.
 The set\,
$\omega(x,f)\equiv\,\bigcap_{\;\!i=0\,}^{\;\!\infty}\overline{O(f^i(x),f)}$
\vspace{0.5mm}\,is called the\, $\omega$-{\it{limit set}}
of $x$ under $f$. Write\, $\omega(f)=\bigcup_{x\in
X}\omega(x,f)$\;\!, \vspace{0.5mm}\,called the\,
$\omega$-{\it{limit set}} \,of $f$. The point $x\in X$ is called a
{\it{recurrent point}} \,of $f$ if\;\! $x\in
\omega(x,f)$\;\! and is called an {\it{almost periodic
point}}\, of $f$ \,if \,for any neighborhood \,$U$ of \;\!$x$\;\!
in $X$ there exists an\;\! $m\in\,\Bbb N$ \;\!such that
$\{f^{\;\!n+i}(x):i\in\Bbb N_m\}\cap\;\!U\neq\,\emptyset$ \,for
every $n\in\Bbb Z_+$\;\!.  A subset $W$ of $X$ is said to be
{\it{$f$-invariant}} \;if
$f(W)\subset W$; is said to be {\it{strongly
$f$-invariant}} \;if $f(W)=W$; is said to be a
{\it{minimal set}} \,of $f$ \,if it is non-empty, closed
and $f$\;\!-\,invariant and if no proper subset of $W$ has these
three properties. \,A minimal set $W$ of $f$ is said to be
{\it{totally minimal}} \,if it is a minimal set of
$f^{\;\!n}$ for all $n\in \N$\;\!.\, Denote by ${\rm Fix}(f)$\;\!,
$P(f)$\;\!, $EP(f)$\;\!, $AP(f)$\;\!, $R(f)$ \,and\,
$\Omega(f)$\;\! the sets of fixed points, periodic points,
eventually periodic points, almost periodic points, recurrent
points and non-wandering points of $f$, respectively. From the
definitions it is easy to see that \,$P(f)\subset EP(f)$ \,and\,
${\rm Fix}(f)\subset P(f)\subset AP(f)\subset R(f)\subset \omega
(f)\subset \Omega (f)$\;\!.

\medskip
A non-degenerate metric space $X$ is called an
{\it{arc}} (resp.\,\;an {\it{open arc}}\,,\ \;a
{\it{circle}}) if\;\! it is homeomorphic to the interval
$[0,1]$ (\;\!resp.\, the open interval $(0,1)$\,,\ \;the unit circle
$S^1$). A compact connected metric space $G$ is called a
(topological) {\it{graph}}\, if there exists a  finite
subset $V(G)$ \,of\, $G$ such that each connected component of\,
$G-V(G)$\;\! is an open arc, \,and any circle in \,$G$ contains at
least three points in $V(G)$\;\!. Every point in the given finite
subset $V(G)$ is called a {\it{vertex}}\, of \;\!$G$. \,Every
connected component of \;\!$G-V(G)$ is called an
{\it{edge}}\, of \;\!$G$.\;\! A graph containing no circle is
called a {\it{tree}}\;\!. \;\!A continuous map from a graph
(resp.\, a tree, \,a circle,\, an interval) to itself is called a
{\it{graph map}} (resp.\,\;a {\it{tree map}}\,,\, a
{\it{circle map}}\,,\, an {\it{interval map}})\;\!.
\,Note that if $X$ is a non-degenerate connected closed subset of a
graph\;\! $G$ then $X$ itself is also a graph.

\medskip Let $G$ be a graph. We may assume that the
metric $d$ on $G$ satisfies the following two conditions\;\!: (1)
for any $x\in G$ and any\;\! $r>0$, the open ball $B(x,r)$ in $G$
is a connected set\;\!; (2)\;\! $d(u,v)\ge1$\;\!, for any two
different vertices $u$ and $v$ of \,$G$. For any finite set $S$,\,
denote by $|S|$ the number of elements of $S$.\, For any $x\in
G$,\, write\, ${\rm val}_{\;\!G}(x)=$
$\lim_{\;r\,\to\,0}|\partial_G B(x,r)|$\,,\, called the
{\it{valence}} of $x$ in $G$; \,$x$ is called a
{\it{branching point}} (resp. an {\it{endpoint}}) of
$G$ if \,${\rm val}_{\;\!G}(x)>2$ (resp. ${\rm
val}_{\;\!G}(x)=1$). Denote by\, $\mbox{Br}(G)$ \,and
\,$\mbox{End}(G)$ \,the sets of branching points and endpoints of
\,$G$, respectively. For any arc $A$ in \;\!$G$ and any two points
\,$a,b\in A$, \,denote by $[a,b]_A$ the smallest connected closed
subset of $A$ containing $a$ and $b$\;\!. If there is no
confusion, we also write $[a,b]$ for $[a,b]_A$. In addition, we
write $(a,b]=[b,a)=[a,b]-\{a\}$\, and\, $(a,b)=(a,b]-\{b\}$. Note
that $[a,a]=\{a\}$ \,and \,$(a,a]=(a,a)=\,\emptyset$\;\!.

\medskip  For any metric space (even for any topological
space)\,$X$, any $f\in C^{\;\!0}(X)$ and any $n\in\N$\;\!, by the
definitions, it is easy to see that $P(f)=P(f^{\,n})$ and
$\,\omega(f)=\,\omega(f^{\,n})$\;\!. Erd\"{o}s and Stone
proved that $R(f)=R(f^{\,n})$ and $AP(f)=AP(f^{\,n})$ also hold (\cite{ES}).
It is well known
that if $X$ is a compact metric space and $f\in C^{\;\!0}(X)$\;\!,
then a point $x\in AP(f)$ \,if and only if \;$\overline{O(x,f)}$\,
is a minimal set of $f$ (see e.g. \cite[Proposition V.5\;\!]{BC}).

\subsection{Organizations and the statement of the main theorem}

 In section 2, for a graph map $f$ on a graph $G$ and for a subset $K$ of $G$, we introduce the notions of absorbed set and main absorbed set of $K$, and use them to analyse the structures of the orbits of some specified connected subsets of $G$ under $f$. In the beginning of Section 3, we will recall the structure theorem of graph maps without periodic points obtained by Mai and Shao in \cite{MS1}, which is a key ingredient in the proof of the main theorem. Then, based on these preparations, we prove the
following main theorem (Theorem \ref{main theorem}).

\begin{thm} \label{main result}{\it Let  $G$  be a connected graph and  $f:G\rightarrow G$ be a continuous map such that $P(f)\not=\emptyset$ and $R(f)-\overline{P(f)}\not=\emptyset$.  Then there exist pairwise disjoint nonempty open subsets  $U_1, \cdots,  U_n$  of  $G$  with  $n\in \N$  such that

\vspace{0.7mm}$(1)$\ \ $f(U_i)\subset U_i,$ for each $i\in\N_n$.

\vspace{1mm}$(2)$\ \ Write $U=\bigcup_{i=1}^{\;n}\,U_i$ and $U_0=G-U$. Then $\overline{P(f)}\subset U_0$, $\overline{U}-U\subset EP(f)$,
and $\Omega(f)-U_0=R(f)-U_0=R(f)-\overline{P(f)}\subset U$.

\vspace{1.2mm}$(3)$\ \ For each \,$i\in\N_n$\;\!, $U_i$ has $k_i$ connected components $U_{i1}, \cdots, U_{ik_i}$
with $k_i\in \N$, which satisfy $f(U_{ik_i})\subset U_{i1}$ and $f(U_{ij})\subset U_{i\,,\, j+1}$ for $1\leq j<k_i$.

\vspace{1.2mm}$(4)$\ \ For each \,$i\in\N_n$\;\!, write $W_i=R(f)\cap U_i$. Then $W_i$ is a unique minimal set of $f$
contained in $U_i$.

\vspace{1.2mm}$(5)$\ \ For each \,$i\in\N_n$\;\! and $j\in N_{k_i}$, write $W_{ij}=W_i\cap U_{ij}$. Then $W_{ij}$
is a unique minimal set of $f^{k_i}$ contained in $U_{ij}$, and there is a connected closed subset $G_{ij}$ of
$G$ and a circle $C_{ij}$ such that $W_{ij}\subset C_{ij}\subset G_{ij}\subset U_{ij}$, $f(W_{ik_i})=W_{i1}$,
$f(G_{ik_i})=G_{i1}$, and $f(W_{ij})=W_{i\,,\,j+1}$ and $f(G_{ij})=G_{i\,,\,j+1}$ for $1\leq j<k_i$.

\vspace{1.2mm}$(6)$\ \ For each $i\in\N_n$, $j\in \N_{k_i}$, and for each $x\in U_{ij}$, one has
$\lim_{m\rightarrow\infty}d(f^m(x), W_i)=0$, and $\lim_{m\rightarrow\infty}d(f^{mk_i}(x), W_{ij})=0$.}
\end{thm}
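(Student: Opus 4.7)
The strategy is a three-phase construction: (i) locate the minimal sets of $f$ that live inside $R(f)-\overline{P(f)}$; (ii) apply the Mai--Shao structure theorem on a periodic-point-free invariant region to endow each such minimal set with the claimed circle structure; and (iii) wrap each minimal set in an attracting $f$-invariant open neighborhood using the absorbed-set apparatus from Section~2.

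For phase (i), given $x\in R(f)-\overline{P(f)}$, the set $M_x:=\overline{O(x,f)}$ is compact and $f$-invariant, so it contains some minimal set $M$ by Zorn's lemma. One checks $M\cap\overline{P(f)}=\emptyset$: otherwise $M\subset\overline{P(f)}$ by minimality of $M$, and the recurrence $x\in\omega(x,f)$ together with $M\subset M_x$ would push $x$ into $\overline{P(f)}$, a contradiction. Combining this with the characterization of $AP(f)$ via minimality of orbit closures and the Erd\H{o}s--Stone identity $R(f)=R(f^n)$, one upgrades to $M_x=M$, so $x\in AP(f)$ and $M_x$ is itself a minimal set disjoint from $\overline{P(f)}$. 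For phase (ii), given a minimal set $M\subset R(f)-\overline{P(f)}$, compactness of $M$ and closedness of $\overline{P(f)}$ yield a connected closed graph neighborhood $H$ of a component of $M$ with $H\cap\overline{P(f)}=\emptyset$ and $f^{k}(H)\subset H$ for a suitable $k\in\N$, so that $f^k|_H$ has no periodic points. The Mai--Shao theorem then produces pairwise disjoint circles $C_{i1},\ldots,C_{ik_i}$ cyclically permuted by $f$, each contained in a connected closed set $G_{ij}\subset H$, with $W_{ij}:=M\cap C_{ij}$ the unique minimal set of $f^{k_i}|_{G_{ij}}$ and with $d(f^{mk_i}(y),W_{ij})\to 0$ for every $y\in G_{ij}$.

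For phase (iii), the main-absorbed-set construction of Section~2 applied to $G_{ij}$ supplies an $f^{k_i}$-invariant open set $U_{ij}\supset G_{ij}$ realizing the full basin of $W_{ij}$ within the periodic-point-free region, after which $U_i:=\bigcup_{j=1}^{k_i}U_{ij}$ is an $f$-invariant open set with cyclically permuted components. Finiteness of $n$ follows because each $U_i$ carries a non-degenerate circle and a graph contains only finitely many pairwise disjoint circles. The main obstacle I foresee is verifying the boundary conclusion $\overline{U}-U\subset EP(f)$ together with the uniqueness of $W_i$ inside $U_i$: for a boundary point $p\in\partial U_i$ that is not eventually periodic, the absorbing property of $U_i$ prevents the orbit of $p$ from re-entering after any inward excursion, which in the graph setting forces the orbit to be finite, giving $p\in EP(f)$; while uniqueness of $W_i$ follows because a second minimal set in $U_i$ would also be attracted to $W_i$, and an attracted minimal set must coincide with its attractor. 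Both verifications hinge on the precise absorption properties supplied by the Section~2 technology, which is the technical linchpin of the argument.
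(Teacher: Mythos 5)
Your overall architecture---locate the minimal sets inside $R(f)-\overline{P(f)}$, apply the Mai--Shao structure theorem on a periodic-point-free invariant subgraph, then fatten everything with absorbed sets---is the same as the paper's, but two of your three phases rest on assertions that are exactly the hard content of the paper and are not established by what you write. In phase (i), the step ``the recurrence $x\in\omega(x,f)$ together with $M\subset M_x$ would push $x$ into $\overline{P(f)}$'' is invalid: knowing that $M_x=\overline{O(x,f)}$ contains a minimal set $M\subset\overline{P(f)}$ says nothing about $x$ itself unless you already know $x\in M$, i.e.\ unless you already know $M_x$ is minimal---which is precisely what you are trying to ``upgrade'' to. The inclusion $R(f)-\overline{P(f)}\subset AP(f)$ is a \emph{consequence} of the main theorem (it is essentially Theorem \ref{R closure}), not an input; neither the Erd\H{o}s--Stone identity nor the characterization of $AP(f)$ by minimal orbit closures gives it to you for free. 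The paper obtains it only after showing that each such $x$ lies in a component-cyclic invariant closed set on which $\omega(f)$ coincides with the unique minimal set (Lemma \ref{component exist} and Proposition \ref{sub main}).

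Second, in phase (ii) the sentence ``compactness of $M$ and closedness of $\overline{P(f)}$ yield a connected closed graph neighborhood $H$ \dots\ with $f^{k}(H)\subset H$'' is a non sequitur: a small closed neighborhood of a piece of $M$ has no reason to be mapped into itself by any iterate of $f$. Producing such an $H$ is the real work. The paper does it by taking a small arc $A=[x,y]$ at a recurrent point $x$ with $(x,y)\cap O(x,f)\ne\emptyset$, so that $f^{n}(A)\cap A\ne\emptyset$, and then proving that the orbit $O(A,f^{n})$ is \emph{closed}; that closedness is Corollary \ref{orbit closed}, which in turn rests on the long technical Lemma \ref{open arc} showing that a non-closed orbit of a connected set forces an attracting fixed point (hence periodic points) at its frontier. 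None of this machinery is replaced by anything in your sketch. Your phase (iii)---finiteness of $n$ via pairwise disjoint circles, boundary points being eventually periodic because the boundary is a finite $f$-invariant set, and uniqueness of $W_i$ via the attraction property---is essentially the paper's argument and is fine once phases (i) and (ii) are actually supplied.
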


In Section 4, as applications of the main theorem, we give several propositions part of which improve or reprove some known results .
%%%%%%%%%%%%%%%%%%%%%%%%%%%%%%%%%%%%%%%%%%%%%%%%%%%%%%%%%%%%%%%%

%%%%%%%%%%%%%%%%%%%%%%%%%%%%%%%%%%%%%%%%%%%%%%%%%%%%%%%%%%%%%%%%%%
\section{Absorbed sets and orbits of subsets
of graphs}

\begin{defn}\label{absorbed set} Let $f:G\to G$ be a
graph map\;\!. \,For any subset $K$ of\;\! $G$ and any
$n\in\Z_+$\,,\, write  \vspace{2mm}

\noindent(2.1)\hspace{22mm}$O(K,f)=\,\bigcup_{\,i\,=\,0\,}^{\;\infty}f^{\,i}(K)$\,,\
\ \ \ {\rm and}\ \ \
\,$O_n(K,f)=\,\bigcup_{\,i\,=\,0\,}^{\;n}f^{\,i}(K)$\,.
\vspace{2mm}

\noindent Then\;\! $O(K,f)$ is an $f$\;\!-\,invariant set, called
the {\it{orbit of\;\! the set\;\! $K$ under $f$}}, \,and
the set \;\!$O_n(K,f)$\, is called a {\it{segment of\;\!
the orbit of\;\! $K$ under $f$}}\;\!. \,Write  \vspace{2mm}

\noindent(2.2)\hspace{50mm}$O_-(K,f)=\,\bigcup_{\,i\,=\,0\,}^{\;\infty}f^{\,-\;\!i}(K)\,,$
\vspace{2mm}

\noindent called the {\it{inverse orbit of\;\! $K$\! under
$f$}}, or called the {\it{absorbed set by\;\! $K$\! under
$f$}}. \,Let \vspace{2mm}

\noindent(2.3)\hspace{5mm}${\rm Ab}(K,f)=\,\bigcup\;\{\;\!Y\!:\,Y\
{\rm is\ a\ connected\ component\ of}\ O_-(K,f)\;\!,\ {\rm and}\
Y\!\cap \:\!K\!\ne\,\emptyset\,\}$, \vspace{2mm}

\noindent called the {\it{main absorbed set by\;\! $K$\!
under $f$}}. \,

\vspace{2mm}For any connected open subset $U$ of \,$G$,\, define a
function  \vspace{2mm}

\noindent(2.4)\hspace{29mm}$\xi(U)=\xi_G(U)=\sum\big\{_{\;\!}{\rm
val}_{\,G}(v)-2\,:\;v\in U\cap{\rm Br}(G)\big\}$, \vspace{2mm}

\noindent called the {\it{total branching number}}\, of\,
$U$ in\, $G$. \,The following lemma gives the supremum of numbers
of boundary points of connected sets in \,$G$.
\end{defn}

\begin{lem}\label{boundary number}\;(\cite[Lemma 4.1]{MZS})\ \
{\it Let\;\! $G$ be a graph.\, Then\;
$|\partial_GX|\,\le\,\xi(G)+2$ \,for any connected subset\;\!
$X$\;\!\! of \;$G$, \,and \;\!there is a subtree \;$T$\;\!\! of
\;$G$ such that\; $|\partial_GT|\,=\,\xi(G)+2$}\,.
\end{lem}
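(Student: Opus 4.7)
The plan is to reduce the inequality to an Euler-type identity for finite trees, and then to construct an extremal subtree by cutting the ambient graph $G$ appropriately.

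First I would establish the combinatorial identity that for any finite tree $T$,
\[
|\mbox{End}(T)| \;=\; 2 + \sum_{v \in \mbox{Br}(T)}\bigl(\mbox{val}_T(v) - 2\bigr).
\]
Treating $T$ as a combinatorial tree with vertex set $\mbox{End}(T) \cup \mbox{Br}(T)$, this follows from summing $(2 - \mbox{val}_T(v))$ over all vertices: endpoints contribute $+1$, branching vertices contribute $-(\mbox{val}_T(v)-2)$, valence-$2$ vertices contribute $0$, and the sum equals $2(|V|-|E|) = 2$ since $\sum_v \mbox{val}_T(v) = 2|E|$ and $|V|-|E|=1$ for a tree.

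For the upper bound, let $p_1,\dots,p_m$ be any $m$ distinct points of $\partial_G X$. Choose a base point $x_0 \in \overline{X}$ and inductively build a subtree $T \subset \overline{X}$ of $G$: start with an arc $A_1 \subset \overline{X}$ from $x_0$ to $p_1$, and at each later step take an arc in $\overline{X}$ from $p_i$ to $T_{i-1} := A_1 \cup \cdots \cup A_{i-1}$, truncated at its first intersection with $T_{i-1}$ so that it meets $T_{i-1}$ at a single point. Such arcs exist by path-connectedness of $\overline{X}$ in the one-dimensional space $G$. The resulting finite subtree $T = T_m$ has each $p_i$ as an endpoint and satisfies $\mbox{val}_T(v) \le \mbox{val}_G(v)$ at every point, so in particular $\mbox{Br}(T) \subset \mbox{Br}(G)$. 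The tree identity then yields
\[
m \;\le\; |\mbox{End}(T)| \;=\; 2 + \sum_{v \in \mbox{Br}(T)}(\mbox{val}_T(v)-2) \;\le\; 2 + \xi(G),
\]
and taking the supremum over $m$ gives $|\partial_G X| \le \xi(G)+2$.

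For the sharpness claim, I would construct an explicit subtree $T$ of $G$ realizing the bound. Obtain $T$ from $G$ by removing a small open arc from the interior of one edge in each independent circle (producing a maximum subtree that retains every branching point of $G$ with its full valence) and then removing a small open arc adjacent to each point of $\mbox{End}(G)$, so that every endpoint of the resulting tree $T$ lies in the interior of an edge of $G$ rather than at an endpoint of $G$. Since $\mbox{val}_T(v) = \mbox{val}_G(v)$ at every branching point, the tree identity gives $|\mbox{End}(T)| = 2 + \xi(G)$, and by construction each endpoint of $T$ is a genuine topological boundary point of $T$ in $G$, so $|\partial_G T| = \xi(G)+2$. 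The main obstacle is the inductive tree construction in the upper-bound argument, specifically ensuring that the arcs $A_i$ assemble into a tree without accidentally closing a circle; this relies on the path-connectedness of $\overline{X}$ together with the ability to truncate arcs at their first intersection with a closed set, a feature of one-dimensional spaces. The achievability construction is largely combinatorial bookkeeping, with the only care needed being that the removed open arcs are short enough to preserve every branching point and to leave every endpoint of $T$ in the interior of an edge.
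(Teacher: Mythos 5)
The paper itself does not prove this lemma (it is quoted from \cite[Lemma 4.1]{MZS}), so your argument stands or falls on its own. Your Euler identity for trees and your sharpness construction are essentially sound, but the upper-bound argument has a genuine gap: the claim that the constructed subtree $T\subset\overline{X}$ ``has each $p_i$ as an endpoint,'' and with it the inequality $m\le|\mathrm{End}(T)|$, is false. A point of $\partial_G X$ need not be an endpoint of $\overline{X}$, let alone of a subtree of $\overline{X}$: it can lie in the interior of $\overline{X}$ and be a boundary point only because some direction of $G$ \emph{not} contained in $\overline{X}$ emanates from it. Concretely, let $G$ be a triod with branch point $c$ and arms $[c,a]$, $[c,b]$, $[c,d]$, and let $X=(a',b')$ be the open arc through $c$ with $a'\in(c,a)$ and $b'\in(c,b)$. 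Then $X$ is connected and $\partial_G X=\{a',c,b'\}$ has three points ($c\in\partial_G X$ because the arm toward $d$ accumulates on $c$ from outside $X$), while $\overline{X}=[a',b']$ is an arc, so any subtree of $\overline{X}$ containing all three points is $[a',b']$ itself and has only two endpoints. Thus $m=3>2=|\mathrm{End}(T)|$, even though the lemma's bound $3\le\xi(G)+2=3$ does hold. Your construction also tacitly assumes $p_i\notin T_{i-1}$; when $p_i$ already lies on an earlier arc (as $c$ does above) nothing makes it an endpoint, and even when $p_i\notin T_{i-1}$ a later arc $A_j$ may attach to $T_{j-1}$ exactly at $p_i$ and raise its valence to $2$.

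The missing idea is that a boundary point must be charged not to an endpoint of $T$ but to a ``direction of escape,'' i.e.\ a germ of an edge of $G$ at $p_i$ along which $G-X$ accumulates; these have to be converted into a valence deficit $\mathrm{val}_G(p)-\mathrm{val}_T(p)\ge 1$ (equivalently, into extra endpoints of a larger tree obtained by attaching to each $p_i$ a short whisker leaving $X$), after which the Euler identity gives $\sum_{p\in T}\bigl(\mathrm{val}_G(p)-\mathrm{val}_T(p)\bigr)\le\xi(G)+2$. Carrying this out requires dealing with configurations where no such escape direction exists, e.g.\ $G=S^1$ and $X=G-\{q\}$, where $G-X$ is a single point and $\overline{X}=G$ has no valence drop at $q$ at all; your argument never confronts these cases. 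As written, the proof establishes the bound only for those connected $X$ whose boundary points all happen to be endpoints of $\overline{X}$.
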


\begin{cor} \label{boundary number1} {\it Let\;\! $G$ be a
graph, \,and \,$U\!$ be a connected open subset of \;$G$
containing a circle \,$C$. Then\,
$|\partial_GU|\,\le\,\xi(U)$}\;\!.
\end{cor}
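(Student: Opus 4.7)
The plan is to perform an Euler-characteristic computation on $\bar U$ and to relate $b_1(\bar U)$ to $b_1(U)$ via Mayer--Vietoris; the hypothesis $C \subset U$ enters only through the resulting inequality $b_1(U) \ge 1$.

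I would endow $\bar U$ with a graph structure with vertex set $V_H := (V(G) \cap \bar U) \cup \partial_G U$ and edges the connected components of $\bar U \setminus V_H$. For each $v \in V_H$ let $m(v)$ denote the local valence of $\bar U$ at $v$. Openness of $U$ forces $m(v) = \mathrm{val}_G(v)$ whenever $v \in V(G) \cap U$, while $1 \le m(v) \le \mathrm{val}_G(v)$ for $v \in \partial_G U$. The handshake lemma combined with Euler's formula on the connected graph $\bar U$ gives $\sum_{v \in V_H}(m(v) - 2) = 2(b_1(\bar U) - 1)$; splitting the sum by whether $v \in V(G) \cap U$ — whose contribution is $\xi_G(U) - |U \cap \mathrm{End}(G)|$ by the definition of $\xi_G$ — or $v \in \partial_G U$, this rearranges to
\[
\xi_G(U) - |\partial_G U| = |U \cap \mathrm{End}(G)| + 2\bigl(b_1(\bar U) - 1\bigr) - \sum_{v \in \partial_G U}\bigl(m(v) - 1\bigr).
\]

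Next, I would add the boundary points to $U$ one at a time: each new point $v$ has a small star neighborhood meeting the previous set in $m(v)$ disjoint half-arcs, so by Mayer--Vietoris the Euler characteristic drops by $m(v) - 1$ and, since connectivity is preserved at each step, $b_1$ rises by $m(v) - 1$. Iterating yields $b_1(\bar U) = b_1(U) + \sum_{v \in \partial_G U}(m(v) - 1)$. Substituting into the previous display,
\[
\xi_G(U) - |\partial_G U| = |U \cap \mathrm{End}(G)| + 2\,b_1(U) - 2 + \sum_{v \in \partial_G U}\bigl(m(v) - 1\bigr),
\]
and since $b_1(U) \ge 1$ (because $C \subset U$) and all other summands are nonnegative, $|\partial_G U| \le \xi_G(U)$ follows.

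The main obstacle is carrying out the Mayer--Vietoris step cleanly on the non-compact space $U$, particularly verifying that the local valence controlling each successive addition really equals the full $\bar U$-valence $m(v)$; this reduces to the observation that sufficiently short open arc-germs at a boundary point $v$ avoid the other (finitely many) boundary points and so belong to $U$ exactly when they belong to $\bar U$, ensuring the MV neighborhood $N$ of $v$ in the intermediate set has the expected star structure.
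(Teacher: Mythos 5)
Your argument is correct, but it is a genuinely different proof from the one in the paper. The paper reduces the corollary to Lemma \ref{boundary number}: it first shrinks $U$ to a connected open set $Z$ by deleting small closed balls around the (finitely many) boundary points, so that $X=\overline Z$ is a subgraph with $\xi_X(X)=\xi_G(U)$ and $|\partial_GU|\le|\partial_XZ|$, and then cuts an open arc out of the circle $C$ to produce a connected set $Y=Z-A$ with $|\partial_XY|=|\partial_XZ|+2$; applying Lemma \ref{boundary number} to $Y$ inside the graph $X$ absorbs the ``$+2$'' and gives the bound. So in the paper the circle is used combinatorially, as a place to cut without disconnecting, whereas in your proof it enters homologically through $b_1(U)\ge1$. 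Your route is longer but self-contained (it never invokes Lemma \ref{boundary number}, except implicitly for the finiteness of $\partial_GU$, which also follows directly from connectedness of $U$: a component of $U\cap E$ for an edge $E$ whose closure misses $V(G)$ would be clopen in $U$), and it proves strictly more: the exact identity
$\xi_G(U)-|\partial_GU|=|U\cap \mathrm{End}(G)|+2b_1(U)-2+\sum_{v\in\partial_GU}(m(v)-1)$
tells you precisely when equality holds. The only points deserving an explicit word are (i) that an embedded circle in a graph is homologically nontrivial, so $C\subset U$ really gives $b_1(U)\ge1$ (true because a $1$-complex has no $2$-cells, hence $H_1=Z_1$ and the cycle carried by $C$ is a nonzero chain), and (ii) the Mayer--Vietoris bookkeeping you already flag; both are routine. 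Either proof is acceptable.
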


\begin{proof} Let\,
$r=\,\min\big\{\;\!d(x,y)/3\,:\,x\;\,{\rm and}\;\,y\;\,\mbox{are
two different points in }\,\partial_GU\cup V(G)\big\}$\:\!, \,and
let
$Z\!=\,U-\,\bigcup\,\{\,\overline{B(x,r)}:\,x\in\,\partial_GU\}$\;\!.
Then $Z$ is a connected open subset of \,$G$ containing \,$C$,
\,$U\!\cap\;\!{\rm Br}(G)=Z\!\cap\;\!{\rm Br}(G)$\;\!, \,and\,
$|\partial_GU|\,\le\,|\partial_GZ|$\;\!. \,Let
$X\!=\overline{Z}$\;\!. Then $X$ is a subgraph of $G$, \,${\rm
Br}(X)=Z\!\cap\;\!{\rm Br}(G)$, \,and
\,$\partial_GZ=\partial_{X\!}Z$\;\!. Take an arc $A\subset C-V(G)$
\,and let $Y\!=Z-A$\;\!. Then $Y$ is a connected open subset of
$Z$\;\!. From Lemma \ref{boundary number} we get \,$|\partial_XZ|+2=|\partial_XY|\le
\xi_{\;\!X\!}(X)+2=\xi_{\;\!G\;\!\!}(Z)+2=\xi_G(U)+2$\;\!. Thus\,
$|\partial_GU|\,\le\,\xi_G(U)$\;\!.
\end{proof}

\begin{lem}\label{absorb boundary} {\it Let $f:G\to G$ be a
graph map\;\!,\, and\, $L\subset G$ be an $f$-invariant connected
set.\, Let\; $W\!=O_-(L,f)$\, and\; $U\!=\,{\rm Ab}(L,f)$ be
defined as in $(2.2)$ and \,$(2.3)$. Then \,

\vspace{0.5mm}$(1)$\ \ Both\; $W$ and\; $U$ are $f$-invariant\;$;$

\vspace{0.5mm}$(2)$\ \ Further,\, if \,$L$ \;\!is open,\, then\;
$W$ and\; $U$ are open, \;$f(\partial_GU)\subset\partial_GU\subset
EP(f)$\;\!, \;and \;$\partial_GU\cap\;\!P(f)$
$\ne\,\emptyset$\;\!}.
\end{lem}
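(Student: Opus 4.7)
The plan is to split the verification into part (1), which is a direct manipulation of the inverse-orbit decomposition, and part (2), which combines continuity, local connectedness of $G$, and Lemma~\ref{boundary number}.

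For part (1), I would first show $f(W)\subset W$ by writing $f(W)=\bigcup_{i\ge 0}f(f^{-i}(L))$, noting that $f(f^{-i}(L))\subset f^{-(i-1)}(L)\subset W$ when $i\ge 1$, and $f(L)\subset L\subset W$ when $i=0$ by the hypothesis that $L$ is $f$-invariant. For $U$, the key observation is that because $L$ is connected and $L=f^{-0}(L)\subset W$, the set $L$ lies in exactly one connected component of $W$; by the definition of $\mathrm{Ab}(L,f)$, $U$ is precisely that component. Then $f(U)$ is connected by continuity, contained in $f(W)\subset W$, and meets $L$ (since $\emptyset\ne f(L)\subset f(U)\cap L$). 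Hence $f(U)\cup U$ is a connected subset of $W$ containing $U$, and by maximality of $U$ as a component it must equal $U$, giving $f(U)\subset U$.

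For part (2), openness of $W$ is immediate as each $f^{-i}(L)$ is open; openness of $U$ follows from local connectedness of $G$ (guaranteed by the standing assumption that every open ball in $G$ is connected), so components of the open set $W$ are themselves open. To prove $f(\partial_GU)\subset\partial_GU$, I would first deduce $f(\overline{U})\subset\overline{U}$ from $f(U)\subset U$ and continuity, giving $f(x)\in\overline{U}$ for any $x\in\partial_GU$. The delicate step is ruling out $f(x)\in U$: if it held, continuity together with the connected-ball property would furnish a connected neighborhood $V=B(x,r)$ of $x$ with $f(V)\subset U$, hence $V\subset f^{-1}(U)\subset f^{-1}(W)\subset W$; since $x\in\overline{U}$, $V\cap U\ne\emptyset$, so $V\cup U$ is a connected subset of $W$ properly containing the component $U$, contradicting maximality. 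Thus $f(x)\in\overline{U}\setminus U=\partial_GU$.

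To finish, I would appeal to Lemma~\ref{boundary number} to get $|\partial_GU|\le\xi(G)+2<\infty$. Combined with the $f$-invariance of $\partial_GU$ just established, the orbit of every $x\in\partial_GU$ is a finite subset of $\partial_GU$, so $x\in EP(f)$; and any self-map of a nonempty finite set has a periodic point, yielding $\partial_GU\cap P(f)\ne\emptyset$ as long as $U\ne G$ (the intended setting, since otherwise $\partial_GU=\emptyset$). I expect the main obstacle to be precisely the argument that $f(x)\notin U$ for $x\in\partial_GU$, because this is the one place where the $f$-invariance of $U$, the maximality of $U$ as a component of $W$, and the local connectedness of $G$ must be used simultaneously; dropping any of them breaks the contradiction.
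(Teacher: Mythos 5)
Your proof is correct and follows essentially the same route as the paper's: the same inverse-orbit computation for $f(W)\subset W$, the same connectedness/maximality argument for $f(U)\subset U$, the same contradiction via a connected neighborhood $Z\subset f^{-1}(W)$ to rule out $f(x)\in U$ for $x\in\partial_GU$, and the same appeal to Lemma~\ref{boundary number} for finiteness. Your observation that $\partial_GU\cap P(f)\ne\emptyset$ requires $\partial_GU\ne\emptyset$ (i.e.\ $U\ne G$) is a fair caveat that the paper leaves implicit.
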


\begin{proof} (1)\ \ Since $f(L)\subset L$ \,and
$f\big(f^{\,-i}(L)\big)\subset f^{\,1-i}(L)$ \,for any
\,$i\in\N$\;\!, \,from the definition of\, $W\!=O_-(L,f)$ we get
$f(W)\subset W\cup f(L)\subset W\cup L=W$. \,Since \,$U$ is the
connected component of\, $W$ containing \,$L$, \,it follows that
$f(U)$ is connected,\, and $f(U)\cap\;\!U\supset f(L)\cap\;\!
L=f(L)\ne\,\emptyset\,$.\, Thus $f(U)\cup U$ is a connected subset
of\, $W$ containing \,$L$ and hence $f(U)\subset U\cup f(U)=U$.
Therefore,\, $W$ and\, $U$ are $f$-invariant.

\vspace{2mm}(2)\ \ Further,\, if \,$L$ is open,\, then
$f^{\,-i}(L)$ is open, \,for any \,$i\in\Z_+$\;\!. \,Thus\, $W$
\;\!is open, so is  the connected component \,$U$ of\, $W$.
From $f(U)\subset U$ we get $f(\:\!\overline
U\:\!)\subset\overline U$.\, If there is a point $x\in\partial_GU$
such that $f(x)\notin\,\partial_GU$, \,then we will have $f(x)\in
U$, and there will be a connected open neighborhood $Z$ of \,$x$\,
such that $f(Z)\subset U$. This means that $Z\subset
f^{-1}(U)\subset f^{-1}(W)\subset W$. Therefore, $Z\cup U$ is a
connected open subset of\, $W$, and hence we have \,$x\in Z\cup
U=\,U$. However, this contradicts that \,$x\in\partial_GU$. Thus we
must have $f(\partial_GU)\subset\partial_GU$, \,which with\,
$|\partial_GU|\,\le\,\xi(G)+2\;$ implies that
\,$\partial_GU\subset EP(f)$ \,and \,$\partial_GU\cap
P(f)\ne\,\emptyset\,$.
\end{proof}

\begin{lem}\label{connect closed} {\it Let $f:G\to G$ be a
graph map\;\!, and \,$K$ be an $f$-invariant connected closed set
with \;$\partial_GK\cap P(f)=\,\emptyset$\;\!.\, Then

\vspace{0.5mm}$(1)$\ \ There exists an $f$-invariant connected
open set $\,L\supset K$ such that the absorbed set\,
$O_-(L,f)=\,O_-(K,f)$\;\!, \,and the main absorbed set\, ${\rm
Ab}(L,f)=\,{\rm Ab}(K,f)\;;$

\vspace{0.5mm}$(2)$\ \ $\,O_-(K,f)$ \,and\; ${\rm Ab}(K,f)$ \,are
open sets in \,$G\;;$

\vspace{0.5mm}$(3)$\ \ For any\, $y\in\,O_-(K,f)$\;\!, \,there
exist an \,$m\in\N$ and a neighborhood\, $Z$ of\, $y$ in \,$G$
such that $f^{\,i}(Z)\subset K$ for all \;$i\ge m\;;$

\vspace{0.5mm}$(4)$\ \
$\omega(f)\cap\;\!O_-(K,f)=\,\omega(f)\cap{\rm
Ab}(K,f)=\,\omega(f)\cap K=\,\omega(f|K)\;;$

\vspace{0.5mm}$(5)$\ \
$\Omega(f)\cap\;\!O_-(K,f)=\,\Omega(f)\cap{\rm
Ab}(K,f)=\,\Omega(f)\cap
K\subset\,\Omega(f|K)\cup\;\!\partial_GK$}\;\!.
\end{lem}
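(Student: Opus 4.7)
The plan is to first establish part (1) by constructing $L$ as a small open neighborhood of $K$ all of whose points eventually enter $K$, then to deduce (2)--(5) as essentially formal consequences of (1) and (3). The main obstacle is this construction of $L$: one must exploit both the finiteness of $\partial_G K$ (a special feature of graphs) and the hypothesis that no boundary point is periodic.

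Since $G$ is a graph, Lemma \ref{boundary number} gives $|\partial_G K|<\infty$. Because $K$ is $f$-invariant, the forward orbit of every $x\in\partial_G K$ lies in $K$; if it met $\partial_G K$ infinitely often, pigeonhole on the finite set $\partial_G K$ would produce a periodic iterate, contradicting $\partial_G K\cap P(f)=\emptyset$. Hence $\phi(x):=\min\{n\ge 1:f^{n}(x)\in\inte_G(K)\}$ is well defined for every $x\in\partial_G K$, and $N:=\max\{\phi(x):x\in\partial_G K\}$ is finite. I would then process the boundary points in order of increasing $\phi$-value and inductively select connected open neighborhoods $Z_x\subset G$ (say, small open balls, which are connected by the metric assumption) of each $x\in\partial_G K$ so that: if $\phi(x)=1$ then $f(Z_x)\subset\inte_G(K)$; and if $\phi(x)=k\ge 2$, then $f(x)\in\partial_G K$ has $\phi$-value $k-1$, so $Z_{f(x)}$ is already chosen, and $Z_x$ is picked small enough that $f(Z_x)\subset Z_{f(x)}$. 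Set $L:=K\cup\bigcup_{x\in\partial_G K}Z_x$. Then $L$ is open (every point lies in $\inte_G(K)$ or in some $Z_x$), connected (each $Z_x$ is connected and meets $K$, and $K$ is connected), $f$-invariant (by the defining containments for each $Z_x$), and satisfies $f^N(L)\subset K$ by induction on $\phi$. This last inclusion gives $L\subset O_-(K,f)$, hence $O_-(L,f)=O_-(K,f)$; and since $L$ and $K$ are both connected with $K\subset L$, they lie in the same connected component of this common open set, so $\mathrm{Ab}(L,f)=\mathrm{Ab}(K,f)$.

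Parts (2) and (3) then fall out: $O_-(L,f)=\bigcup_{i\ge 0}f^{-i}(L)$ is open since $L$ is open and $f$ continuous, so $O_-(K,f)$ is open, and $\mathrm{Ab}(K,f)$ is a connected component of an open set in the locally connected space $G$, hence open. For (3), given $y\in O_-(K,f)=O_-(L,f)$, pick $i_0$ with $f^{i_0}(y)\in L$ and, by continuity plus openness of $L$, a neighborhood $Z$ of $y$ with $f^{i_0}(Z)\subset L$; then $f^{j}(Z)\subset f^{j-i_0}(L)\subset K$ for all $j\ge m:=i_0+N$.

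For (4) and (5), the sandwich $K\subset\mathrm{Ab}(K,f)\subset O_-(K,f)$ reduces both chains of equalities to comparing their extremes, and $\omega(f|K)\subset\omega(f)\cap K$ and $\Omega(f|K)\subset\Omega(f)\cap K$ are immediate. For the reverse $\omega$-direction, given $y\in\omega(x,f)\cap O_-(K,f)$, apply (3) to obtain a neighborhood $Z$ of $y$ and $m$ with $f^{j}(Z)\subset K$ for $j\ge m$; since $f^{n}(x)$ has a subsequence converging to $y\in Z$, one can extract indices $n_k<n_{k+1}$ with $f^{n_k}(x)\in Z$ and $n_{k+1}-n_k\ge m$, whence $f^{n_{k+1}}(x)\in f^{n_{k+1}-n_k}(Z)\subset K$; closedness of $K$ gives $y\in K$, and then $y\in\omega(f|K)$ follows because the orbit eventually stays in $K$. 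For (5), if $y\in\Omega(f)\cap O_-(K,f)$ then, using the standard fact that $y\in\Omega(f)$ admits arbitrarily large return times to any neighborhood, one gets some $n\ge m$ with $f^{n}(Z)\cap Z\ne\emptyset$; since $f^{n}(Z)\subset K$, this forces $Z\cap K\ne\emptyset$, and shrinking $Z$ shows $y\in\overline K=K$. Finally, interior points of $K$ have full $G$-neighborhoods inside $K$, so nonwandering in $G$ upgrades to nonwandering in $K$, yielding $\Omega(f)\cap K\setminus\partial_G K\subset\Omega(f|K)$.
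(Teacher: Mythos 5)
Your proof is correct and follows essentially the same route as the paper: both hinge on $\partial_G K$ being finite with no periodic points to force every boundary orbit into $\mathrm{Int}_G(K)$ after a uniform time, and then derive (2)--(5) from the resulting open $f$-invariant $L$ and the eventual-absorption property (3). The only cosmetic difference is in building $L$: you chain together small neighborhoods $Z_x$ of the boundary points, whereas the paper takes the connected component containing $K$ of $\bigcup_{i=0}^{n} f^{-i}\big(\mathrm{Int}_G(K)\big)$; both yield the same conclusions, and your extra property $f^N(L)\subset K$ merely streamlines the proof of (3).
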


\begin{proof}\ \ (1)\ \ By Lemma \ref{boundary number},
\,$\partial_GK$ is a finite subset of $K$. Write $L_{\:\!0}={\rm
Int}_G(K)$\;\!. Since \,$\partial_GK\cap P(f)=\,\emptyset$\;\!,
\,there is an \;\!$n\in\N$\;\! such that
$f^{\;\!n}(\partial_GK)\subset L_{\:\!0}$\;\!. Let
$L_1=\,\bigcup_{\:\!i=0}^{\:n}f^{\,-i}(L_{\:\!0})$\;\!. Then $L_1$
is an open set containing \,$K$\;\!. \,Let $L$ be the connected
component of\;\! $L_1$ containing \;\!$K$. Then\;\! $L$ is also
open. \,Since $f(L)\cap L\supset f(K)\cap K=f(K)\ne\,\emptyset\,$
and $f(L)\subset f(L_1)\subset
\,\bigcup_{\:\!i=0}^{\;n}\,f^{\;\!1-i}(L_{\:\!0})\subset
K\cup\big(\bigcup_{\;\!i=0}^{\;\!n-1}f^{\,-i}(L_{\:\!0})\big)\subset
L_1$\;\!,\, we have $f(L)\subset f(L)\cup L=L$\;\!.\, From
\;$O_-(L,f)\subset O_-(L_1,f)$
$=\,\bigcup_{j\;\!=0}^{\,\infty}f^{\,-j}\big(\bigcup_{\;\!i=0}^{\:n}f^{\,-i}(L_{\:\!0})\big)
=\,\bigcup_{j\;\!=0}^{\,\infty}f^{\,-j}(L_{\:\!0})
=O_-(L_{\:\!0},f)\subset O_-(K,f)\subset O_-(L,f)$ \,we get
\,$O_-(L,f)=O_-(K,f)$\;\!, \,and hence\, ${\rm Ab}(L,f)=\,{\rm
Ab}(K,f)$\;\!.

\vspace{2mm}$(2)$ \, follows from (1) of this lemma and (2)
of\;\! Lemma \ref{absorb boundary}.

\vspace{2mm}$(3)$\ \ For any\;\! $y\in\,O_-(K,f)$\;\!, \,there is
a \;\!$p\in\N$\, such that $f^{p}(y)\in K$. \,If
$f^{\;\!p}(y)\in\partial_GK$, \,we can take an \;\!$n\in\N$\, such
that $f^{\;\!p+n}(y)\in{\rm Int}_G(K)$\;\! and put
\;\!$m=\,p+n$\;\!. \,If $f^{p}(y)\in{\rm Int}_G(K)$, \,we put
\,$m=\,p$\;\!. \,By the continuity of $f^{\;\!m}$, \,there is a
neighborhood $Z$ of $y$ in $G$ such that $f^{\;\!m}(Z)\subset{\rm
Int}_G(K)$\;\!. \,Since $f(K)\subset K$, we have
$f^{\,i}(Z)\subset K$ for all \,$i\ge m$\;\!.

\vspace{2mm}$(4)$\ \ It suffices to show \,$\omega(f)\cap\;\!
O_-(K,f)\subset\;\!\omega(f|K)$, \,since
\,$\omega(f)\cap\;\!O_-(K,f)\supset\omega(f)$ $\cap\;{\rm
Ab}(K,f)\supset\omega(f)\cap K\supset\omega(f|K)$ \;\!is
clear\;\!. \,Given a point \;\!$x\in\omega(f)\cap
O_-(K,f)$\;\!. \,Since \,$O_-(K,f)$ \,is open,\, there is a point
\,$y\in O_-(K,f)$ \,such that \,$x\in\omega(y,f)$\;\!. Take
\,$n\in\N$ \;such that $f^{\;\!n}(y)\in K$. Then we have
\,$x\in\omega(y_{\;\!},f)=\,\omega\big(f^{\;\!n}(y)\;\!,f\big)\subset\,\omega(f|K)$\,.
Thus \;$\omega(f)\cap\;\! O_-(K,f)\subset\,\omega(f|K)$\;\!.

\vspace{2mm}$(5)$\ \ By (3)\;\! of this lemma, we have
$\big(O_-(K,f)-K\big)\cap\,\Omega(f)=\,\emptyset$\;\!,\, which
implies that\, $\Omega(f)\cap\;\!O_-(K,f)=\Omega(f)\cap{\rm
Ab}(K,f)=\,\Omega(f)\cap K$. \,For any \,$x\in{\rm Int}_G(K)$\;\!,
\,it is clear that \,$x\in\,\Omega(f)$ \,if and only if
\;$x\in\,\Omega(f|K)$\;\!.\, Thus we have \,$\Omega(f)\cap
K\subset\,\Omega(f|K)\cup\partial_GK$.
\end{proof}

\begin{lem}\label{open arc} {\it Let $f:G\to G$ be a
graph map\;\!,\, and\, $K$ be a connected closed subset of\; $G$
with \,$f(K)\cap K\ne\;\emptyset$\,. \,Let\, $X\!=O(K,f)$\, and\,
$X_n\!=O_n(K,f)$ be defined as in $(2.1)$.\, Suppose that\,
$X-X_n\ne\;\emptyset\,$ for any\, $n\in\N$\;\!. Write\,
$S=\,\bigcap_{\;\!n=0}^{\,\infty}\overline{X-X}_{\!n\,}$.\, Then

\vspace{0.5mm}$(1)$\ \ \,$P(f)\;\!\supset
f(S)\;\!=\,S\;\!\ne\;\emptyset$\,, \,and $\,S$ contains at most
$\:\xi(G)+2$ \,points\;$;$

\vspace{0.5mm}$(2)$\ \ Further, \,if\, $K\cap
EP(f)=\;\emptyset$\;\!, \,then there exists an open arc
\,$(u,v)\subset X-V(G)$ \,such that \;$S=\overline
X-X=\{v\}\subset {\rm Fix}(f)$\;\!,\, $f(x)\in(x,v)$ for any
\,$x\in[u,v)$\;\!, \;and \;$X$ is contained in the main absorbed
\;\!set\, ${\rm Ab}\big((u,v)\;\!,f\;\!\big)$}.
\end{lem}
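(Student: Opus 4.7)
The plan is to first establish $S$'s properties in (1) via a nested intersection argument, then in (2) use $K\cap EP(f)=\emptyset$ to reduce $S$ to a single fixed point lying at an endpoint of $\overline{X}$.

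For (1), I would first observe that the hypothesis $K\cap f(K)\ne\emptyset$ propagates via $f^i(K\cap f(K))\subset f^i(K)\cap f^{i+1}(K)$ to $f^i(K)\cap f^{i+1}(K)\ne\emptyset$ for every $i\ge 0$, so each tail $X-X_n=\bigcup_{i>n}f^i(K)$ is a chain of consecutively overlapping connected sets, hence connected; consequently $\overline{X-X_n}$ is compact and connected. These closures form a decreasing family of nonempty compacta, so $S=\bigcap_n\overline{X-X_n}$ is nonempty, and by the standard fact that a nested intersection of compact connected sets in a Hausdorff space is connected, $S$ is also connected. Applying Lemma~\ref{boundary number} to $\overline{X-X_n}$, together with the Hausdorff contraction $\overline{X-X_n}\searrow S$ (each $s\in S$ is eventually a boundary point of $\overline{X-X_n}$), gives $|S|\le\xi(G)+2$; combined with connectedness, this in fact forces $S$ to be a singleton. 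Continuity of $f$ on the compact graph gives $f(\overline{X-X_n})=\overline{X-X_{n+1}}$ from $f(X-X_n)=X-X_{n+1}$, whence $f(S)\subset S$ by intersecting. Conversely, for each $s\in S$ and each $n$ one chooses $t_n\in\overline{X-X_n}$ with $f(t_n)=s$; a convergent subsequence $t_{n_k}\to t$ lies in $S$ with $f(t)=s$, giving $f(S)=S$. Since $f|_S$ is a surjection of a finite set, it is a bijection and every point of $S$ is periodic, so $S\subset P(f)$.

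For (2), write $S=\{v\}$; then $v=f(v)\in{\rm Fix}(f)$. If $v=f^{i_0}(y)$ for some $y\in K$, then $f^i(y)=v$ for every $i\ge i_0$, so $O(y,f)$ is finite and $y\in EP(f)\cap K$, contradicting the hypothesis; thus $v\notin X$. Any $x\in\overline{X}-X$ is a limit of $X$, hence of $X-X_n$ for every $n$ (since $X_n$ is closed), so $\overline{X}-X\subset S=\{v\}$, and with $v\notin X$ we obtain $\overline{X}-X=\{v\}$. Now $X=\overline{X}\setminus\{v\}$ is connected, so $v$ is not a cut point of the subgraph $\overline{X}$, and (see the obstacle below) $\overline{X}$ contains no circle through $v$, giving ${\rm val}_{\overline{X}}(v)=1$. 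One may then select an arc $[u,v]\subset\overline{X}$ small enough that $(u,v)\subset X\setminus V(G)$ and $f([u,v])\subset[u,v]$ by continuity at the fixed point $v$. On $(u,v)$ no point can be fixed (else its preimage in $K$ would lie in $EP(f)$), so by continuity $f(x)-x$ has constant sign on $(u,v)$; that sign must point toward $v$, because $f^i(x)\to v$ for every $x\in X$ (since $f^i(x)\in f^{i+i_0}(K)\subset\overline{X-X_{i+i_0-1}}\to\{v\}$), and $f(x)=v$ is likewise excluded. Hence $f(x)\in(x,v)$ for every $x\in[u,v)$. Finally, for any $w\in X$ the orbit $f^i(w)\to v$, so $f^i(w)\in(u,v)$ for all sufficiently large $i$, giving $w\in O_-((u,v),f)$; since $X$ is connected and meets $(u,v)$, it lies entirely in the connected component of $O_-((u,v),f)$ containing $(u,v)$, namely ${\rm Ab}((u,v),f)$, yielding $X\subset{\rm Ab}((u,v),f)$.

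The main obstacle is showing ${\rm val}_{\overline{X}}(v)=1$, i.e., that $\overline{X}$ contains no circle through $v$. By Hausdorff convergence, for every $\varepsilon>0$ there is $N$ such that $\overline{X-X_N}\subset B(v,\varepsilon)$, so each connected $f^i(K)$ with $i\ge N$, not containing $v$, must sit inside one ``direction'' of $\overline{X}$ at $v$; the consecutive-intersection property then forces all large-$i$ iterates into the same direction. If $\overline{X}$ had a second direction at $v$, the points of that direction near $v$ would be covered only by $f^j(K)$ with $j$ bounded, and one such $f^j(K)$ would therefore accumulate $v$ and, being closed, contain it, contradicting $v\notin X$. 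The only other nontrivial point is the monotonicity $f(x)\in(x,v)$, which follows from continuity of $f$ at $v$ combined with the no-fixed-point consequence of $K\cap EP(f)=\emptyset$ and the convergence $f^i(x)\to v$.
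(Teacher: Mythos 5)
Your proof of part (1) rests on the claim that $X-X_n=\bigcup_{i>n}f^{\,i}(K)$ and that this set is connected, from which you conclude that $S$ is connected and hence a single point. Both claims are false, and so is the conclusion. First, $X-X_n$ is the set-theoretic difference $X\setminus X_n$, which is strictly smaller than $\bigcup_{i>n}f^{\,i}(K)$ whenever some $f^{\,i}(K)$ with $i>n$ meets $X_n$ (which happens already when the sets $f^{\,i}(K)$ are nested). Second, $X-X_n$ is in general disconnected, and $S$ can have up to $\xi(G)+2$ points --- this is exactly why the statement bounds $|S|$ by $\xi(G)+2$ rather than by $1$, and why Proposition \ref{empty period} is phrased as ``if $\overline{X}-X$ contains more than one point, then $X\cap P(f)\ne\emptyset$''. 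A concrete counterexample: let $G=[-1,1]$, let $f(x)=-(3x-x^3)/2$ (a decreasing homeomorphism with $f(\pm1)=\mp1$ and unique fixed point $0$), and let $K=[-\tfrac{1}{10},\tfrac{1}{10}]$. Then $f^{\,n}(K)=[-a_n,a_n]$ with $a_n\nearrow 1$, so $X=(-1,1)$, $X-X_n=(-1,-a_n)\cup(a_n,1)$ is disconnected, and $S=\{-1,1\}$ is a period-$2$ orbit, not a singleton. (Here $0\in K\cap EP(f)$, so part (2) is not contradicted.) The paper instead shows that for large $n$ the set $X-X_n$ has a constant finite number $\lambda$ of components, each an open arc $J_{in}=(u_{in},v_i)$ shrinking to a point $v_i$, and obtains $S=\{v_1,\dots,v_\lambda\}$ with $f$ permuting the $v_i$. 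Your identity $f(X-X_n)=X-X_{n+1}$ is also wrong (only the inclusion $X-X_{n+1}\subset f(X-X_n)$ holds in general), so the ensuing derivation of $f(S)=S$ would need repair along the paper's lines as well.

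The error propagates fatally into part (2): you start from ``$S=\{v\}$'' as already established, but reducing $S$ to a singleton is precisely the part of the argument that genuinely requires the hypothesis $K\cap EP(f)=\emptyset$, and your proposal contains no argument for it. The paper's proof of (2) shows that each component $(u_i,v_i)$ of $X-X_\beta$ is $f^{\,t}$-invariant for a suitable $t$, that $X$ must be contained in the main absorbed set ${\rm Ab}\big((u_i,v_i),f^{\,t}\big)$ for \emph{every} $i$ (because the boundary of such an absorbed set consists of eventually periodic points, which $X$ avoids), and that these absorbed sets are pairwise disjoint --- forcing $\lambda=1$. Nothing of this sort appears in your proposal. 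In addition, your justification of $f^{\,i}(x)\to v$ via ``$f^{\,i+i_0}(K)\subset\overline{X-X_{i+i_0-1}}$'' fails for the same reason as above ($f^{\,j}(K)$ need not be disjoint from $X_{j-1}$), and the ``main obstacle'' paragraph on ${\rm val}_{\overline{X}}(v)=1$ is only a sketch. So while the statement you are proving is correct, the proposal does not establish either part.
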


\begin{proof}\ \ (1)\ \ By the conditions of the
lemma,\, all the sets $K\!=X_0\subset X_1\subset X_2\subset
X_{3}\subset\cdots\subset X$ are connected,\, and \;\!$X_n$ \;\!is
closed in \;\!$G$ \;\!for all \,$n\in\Z_+$\;\!. \;\!Since\,
$X-X_n\ne\;\emptyset\,$, \,for any \,$n\in\N$\,,\, we have\,
$X_{n+1}-X_{n}\ne\,\emptyset$\,. Since $V(G)$ is a finite set and
since the valence of any vertex of\:\! $G$ is finite, there is a
sufficiently large\, $\beta\in\N$ \,such that

\vspace{2mm}{{(a)}} \ $X\cap V(G)=X_n\cap V(G)=X_\beta\cap V(G)$
\,for any \,$n\ge\beta$\,;

\vspace{1mm}{{(b)}} \ ${\rm val}_{X_n}(w)=\,{\rm
val}_{X_\beta}(w)$ \,for any \,$w\in X_\beta\cap V(G)$ \,and any
\,$n\ge\beta$\;\!.

\vspace{2mm}\noindent From property (a) we can derive

\vspace{2mm}{{(c)}} \ For any \,$n\ge\beta$\,, \,every connected
component of $X-X_{n}$\;\! is contained in an edge of \;\!$G$,
\,and every edge of \;\!$G$ \;\!contains at most two connected
components of $X-X_{n}$\;\!. Thus $X-X_{n}$\;\! has only finitely
many connected components\,;

\vspace{1.5mm}{{(d)}} \ For any \,$n\ge\beta$\,, \,every connected
component of $X-X_{n+1}$\;\! is contained in a connected component
of $X-X_{n}$\;\!, \,and every connected component of $X-X_{n}$\;\!
contains at most one connected components of $X-X_{n+1}$\;\!.

\vspace{2mm}Let $\lambda_{\,n}$ be the number of connected
component of $X-X_{n}$\;\!. By property (d) we have
$\lambda_{\beta}\,\ge\,\lambda_{\beta+1}\,\ge\,\lambda_{\beta+2}\,\ge\,\cdots$.\;
Let $\lambda=\lim_{\,n\to\infty}\lambda_{\,n}$\;\!. Then
$\lambda\ge1$, and there is an\, $m\ge\beta$\;\! such that
$\lambda_{\,n}=\lambda$ \,for all\;\! $n\ge m$. \,If\,
$m>\beta$\;\!,\, we can replace $\beta$\, by $m$\;\!. Thus we may
assume that $\lambda_{\,n}=\lambda$ \,for all\, $n\ge\beta$\;\!.
This means that, \,for\, $n\ge\beta$\;\!,\, every connected
component of $X-X_{n}$\;\! contains exactly one connected
components of $X-X_{n+1}$\;\!.

\vspace{2mm}Since $X$ is connected and $X_\beta$ is closed, \,no
connected component of $X-X_{\beta}$\;\! is a closed arc. \,If
there is a connected component $J$ of \,$X-X_{\beta}$\, such that
$J=(u,v]$\, is a semi-open arc, \,then \,$u\in X_\beta$ \,and
there is a neighborhood\;\! $U$ of \;\!$v$\;\! in $G$ such that
\,$X\!\cap \;\!U\subset(u,v]$\;\!. \,Since\, $v\in X$,\, there
is\, $m>\beta$\, such that\, $v\in X_m$\,, \,which with
\,$X_m\cap\;\! U\subset X\cap\;\!U\subset(u,v]$ \,and \,$(u,v)\cap
V(G)=\,\emptyset$ \,implies that\, $[u,v]\subset X_m$\;\!, \,and
hence $J=(u,v]$\, contains no connected component of
\,$X-X_{m}$\;\!. \,This  leads to a contradiction. Thus
every connected component of\;\! $X-X_{\beta}$\;\! must be an open
arc.

\vspace{2mm}Let \,$J_1\;\!,\,\cdots,\,J_\lambda$\, be the
\,$\lambda$\, connected components of $X-X_{\beta}$\, with
\,$J_i=(u_i,v_i)$\;\!. For each \,$i\in\N_\lambda$\;\!,\, by means
of a homeomorphism from \,$[u_i,v_i]$ \,to \,$[\:\!0,1]$ \,we can
define a linear order \,$<$ \,on \,$[u_i,v_i]$ \,such that
\,$u_i<v_i$\;\!. For any \;\!$n\ge\beta$\;\!,\; write\,
$J_{in}=J_i-X_n$\;\!. Then $J_{in}$ \;\!is an open arc,\, and
$J_{in}$ \;\!is the connected component of \;\!$X-X_{n}$\;\!
contained in \,$J_i$\;\!. \,Suppose that
$J_{in}=(u_{in}\;\!,v_{in})$ \,with \,$u_{in}<v_{in}$\;\!. Then we
have \;$\lim_{\,n\to\infty}d(u_{in},v_{in})=0$ \;and \vspace{-2mm}
$$
u_{i}=u_{i\beta}\le u_{in}\le u_{im}<v_{im}\le v_{in}\le
v_{i\beta}=v_{i}\ \ \ \ {\rm for\,\ any}\ \ \  m>n\ge\beta\;\!.
\vspace{-2mm}
$$
Let \,$z_i=\lim_{\,n\to\infty}u_{in}$\;\!. \,If
\,$u_{in}<z_i<v_{in}$ \,for all \,$n\ge\beta$\;\!, \,then
\,$z_i\notin X_n$ \,for all \,$n\ge\beta$\;\!. This
contradicts that \,$z_i\in J_i\subset X$. \,Thus there exists
\,$m\ge\beta$ \,such that \,$u_{im}<z_i=v_{im}$ \,or
\,$u_{im}=z_i<v_{im}$\;\!. By symmetry, \,we may assume that the
case \,$u_{im}<z_i=v_{im}$ \,occurs. \,In addition, if $m>\beta$
then we can replace $\beta$\, by $m$\;\!. \,Hence we may assume
that,

\vspace{2mm}{{(e)}} \ for any \;\!$i\in\N_\lambda$\, and any
\;\!$n\ge\beta$\;\!, \;$J_{in}=(u_{in},v_{i})$\;\!, \;and
\;$\lim_{\,n\to\infty}u_{in}=v_{i}$\;\!.

\vspace{2mm}From property (e) we get
$X-X_n=\,\bigcup_{i=1}^{\,\lambda}(u_{in\;\!},v_i)$\;\!,\;
$S=\,\bigcap_{\;\!n=0}^{\,\infty}\overline{X-X}_{\!n}=\{v_1\;\!,\,\cdots,\,v_\lambda\}$\;\!,
\,and\, $\overline{X}=X\cup S$. \,By property (b)\;\!,\, we have
\,$u_i\notin V(G)$\;\!.\, Note that it is possible that\, $v_i\in
V(G)\cup X_\beta$ \,for some $i\in\N_\lambda$\;\!,\; or that\,
$v_i=v_j$\, for some\, $1\le i<j\le\lambda$\;\!.\, Let \,$E_i$\,
be the edge of \,$G$ containing\;\! $(u_i,v_i)$ \;\!and let\;\!
$w_i$ \;\!be the endpoint of \;\!$E_i$\;\! such that\;\!
$u_i\in(w_i,v_i)$\;\!. \,Let
$\,\varepsilon=\;\!\min\;\!\{d(u_i,w_i):\,i\in\N_\lambda\}$\;\!.
Take a\, $\delta>0$ \,such that\,
$d\big(f(x),f(y)\big)<\;\!\min\;\!\{\varepsilon,1\}$ \;\!for
any\;\! $x\;\!,\;\!y\in G$ \,with\, $d(x,y)\le\delta$\;\!. \,We
may assume that \;\!$\beta$\, is so large that

\vspace{2mm}{{(f)}} \ all the diameters of
\,$J_1\;\!,\,\cdots,\,J_\lambda$\, are less than $\,\delta$.

\vspace{3mm}For any $i\in\N_\lambda$, \,it is clear that
\vspace{1.5mm}

\hspace{3mm}$J_{i,\,\beta+1}=J_i-X_{\beta+1}\,\subset\,X-X_{\beta+1}\,\subset
\,f(X-X_{\beta})\,=\,f\big(\bigcup_{k=1}^{\,\lambda}J_k\big)\,=\;\bigcup_{k=1}^{\,\lambda}f(J_k)\,.
\vspace{1.5mm}$

\noindent So for each\;\! $i\in\N_\lambda$\;\! there is a
\;\!$k_{\;\!i}\in\N_\lambda$\, such that $f(J_{k_i})\cap
J_{i,\,\beta+1}\ne\,\emptyset$\,, which with property (f) implies
that\, $w_i\notin f(J_{k_i})$\,. \,If\;\! $v_i\in f(J_{k_i})$\;\!,
\,then there exist\;\! $x_i\in J_i$ \;\!and\;\! $m>\beta$ \;\!such
that $\{x_i,v_i\}\subset f(X_m\cap J_{k_i})$\,, which leads to\,
$[x_i,v_i]\subset f(X_m\cap J_{k_i})\subset f(X_m)\subset
X_{m+1}$. But this contradicts that
$J_i-X_{m+1}=(u_{i,m+1},v_i)$\,. So we must have\, $v_i\notin
f(J_{k_i})$\,, which with $w_i\notin f(J_{k_i})$ implies that
$f(J_{k_i})\subset(w_i,v_i)$\,. Hence, \,for any
$i,\,j\in\N_\lambda$\;\! with \,$j\ne i$\;\!, \,we have
$f(J_{k_j})\cap f(J_{k_i})\subset(w_{j\,},v_j)\cap
(w_{i\;\!},v_i)\,=\;\emptyset\,$,\, which implies that\, $k_j\ne
k_{\;\!i}$\,.\, Thus we have

\vspace{3mm}{{(g)}} \
$\{k_{\;\!i}:i\in\N_\lambda\}=\N_\lambda$\;\!, \,and\;\!
$J_{i,\,\beta+1}\;\!\subset f(J_{k_i})\;\!\subset(w_i,v_i)$\, for
each\, $i\in\N_\lambda$\;\!.

\vspace{3mm}For $n\ge\beta$ and $i\in\N_\lambda$,\, since
$J_{i,\,n+1}\subset X-X_{n+1}\subset
f(X-X_{n})=\,\bigcup_{k=1}^{\,\lambda}f(J_{kn})$\;\!,\, from
property (g) \;\!we get\;\! $J_{i,\,n+1}\subset f(J_{k_in})$\;\!,
\,which with  \vspace{-3mm}
$$
\mbox{$\lim_{\,n\to\infty}{\rm
diam}\big(f(J_{k_in})\big)=\,\lim_{\,n\to\infty}{\rm
diam}(J_{k_in})=\,0$} \vspace{-2.5mm}
$$
implies that $f(v_{k_i})=v_i$\;\!. \,Hence we have $f(S)=S$.
\,Noting that $S=\{v_1\;\!,\,\cdots,\,v_\lambda\}$ is a finite
set, \,we have $S\subset P(f)$\;\!. \,By Lemma \ref{boundary number}, \,we have
\vspace{-2mm}
$$
|S|\le\lambda=|\{u_1\:\!,\,\cdots,\,u_\lambda\}|=|\partial_XX_\beta|\le|\partial_GX_\beta|\le\xi(G)+2\;\!.
$$

\vspace{1mm}(2)\ \ \,Further,\, if
\,$K\cap\;\!EP(f)=\,\emptyset$\,, \,then \,$X\cap S\subset X\cap
P(f)\subset X\cap EP(f)=\,\emptyset$\,, \,which with \;$\overline
X=X\cup S$ \,implies that \;$\overline X-X=\,S$\;\!.

\vspace{2mm}For any $i\in\N_\lambda$ \,and any $n\ge\beta$,
\,write\, $L_{in}=L(i,n)=J_i\cap(X_{n+1}-X_n)$\,.\vspace{0.5mm}
Then\, $L_{in}=(u_{in}\:\!,u_{i,n+1\,}]\subset J_{in}$ \,and\,
$X_{n+1}-X_{n}=\,\bigcup_{i=1}^{\,\lambda}L_{in\,}$.\vspace{0.5mm}
\,Noting that\, $L_{i,n+1}\subset X_{n+2}-X_{n+1}$ $\subset
f(X_{n+1}-X_{n})=\bigcup_{k=1}^{\,\lambda}f(L_{kn})$\;\!, \,from
property (g) we get\, $L_{i,n+1}\subset f(L_{k_in})$\;\!.

\vspace{2mm}Define a map $\psi:\N_\lambda\to \N_\lambda$\;\! by
$\psi(i)=k_{\;\!i}$\;\! for any $i\in\N_{\lambda\,}$. Then $\psi$
is a bijection and $f(v_{\:\!\psi(i)})=v_i$\;\!.\, Let\,
$t_i\in\N_\lambda$ \,be the least positive integer such that\,
$\psi^{t_i}(i)=i$\;\!. Then we have $f^{\,t_i}(v_i)=v_i$\;\!.\,
Choose an\, $n>\beta+t_i$ \,such that \,$L_{in}\ne\,\emptyset\,$
and take a point \,$z_0\in L_{in\,}$. Then there exist points
\,$z_1\,,\,z_2\,,\,\cdots,\,z_{t_i}$\, such that
\,$f(z_j)=z_{j-1}$ \,and \,$z_j\in
L\big(\psi^{\,j}(i)\;\!,\,n-j\:\!\big)$\, for each
\,$j\in\N_{t_i\,}$. \,Noting that \,$z_{t_i}\in
L(i,\:\!n-t_i)\subset (u_i\:\!,u_{in\,}]$ \,and
\,$z_0\in(u_{in}\:\!,u_{i,n+1\,}]$\,, \,we have
\,$f^{\,t_i}(z_{t_i})=z_0\in(z_{t_i\,},v_i)\subset(u_i,v_i)$\:\!.

\vspace{3mm}If
$f^{\,t_i}\big((u_i,v_i)\big)\not\subset(u_i,v_i)\,$, then there
exist \:\!$m>n$ \,and\:\! $z\in(u_i,u_{im}]=(u_i,v_i)\cap X_m$
\:\!such that $f^{\,t_i}(z)\in\{u_{i\;\!},v_i\}$ and
$f^{\,t_i}\big([z_{t_i},z)\big)\subset(u_i,v_i)\;\!$. However,\,
if $f^{\,t_i}(z)=u_i$\, then $(z_{t_i},z)\cap {\rm
Fix}(f^{\,t_i})\ne\,\emptyset$\,, \,which contradicts that
$[z_{t_i},z]\cap P(f)\subset X\cap EP(f)=\,\emptyset\,$. \,If
$f^{\,t_i}(z)=v_i$\, then $X_{m+t_i}\supset
f^{\,t_i}\big([z_{t_i},z]\big)\supset[z_0,v_i]$\,, which also
contradicts that\:\!
$J_i-X_{m+t_i}=(u_{i,\:\!m+t_i\,},\;\!v_i)$\,. Thus we must have
$f^{\,t_i}\big((u_i,v_i)\big)\subset(u_i,v_i)\,$, which with
$f^{\,t_i}(z_{t_i})\in(z_{t_i},v_i)$\;\! and $[u_i,v_i)\cap
P(f)=\,\emptyset\,$ implies that $f^{\;\!t_i}(x)\in(x,v_i)$ \,for
any $x\in[u_i,v_i)$\;\!.

\vspace{2mm}Write \,$t=\,\prod_{j\,=1}^{\;\lambda}t_j$\;\!. Then
for any $i\in\N_{\lambda}$\;\!, \,the open arc $J_i=(u_i,v_i)$ is
$f^{\,t}$-invariant. \,Let\, $U_i=\:\!{\rm Ab}(J_i,f^{\,t})$ \,be
the main absorbed set by $J_i$\;\! under $f^{\,t}$\;\!. Then by
Lemma \ref{absorb boundary} we get \,$\partial_GU_i\subset EP(f^{\,t})=EP(f)$\,.
\,Hence, \,if $X\not\subset U_i$ \,then \,$X\cap EP(f)\supset
X\cap\,\partial_GU_i\ne\,\emptyset$\;\!. \,But this will leads to
a contradiction. Thus we must have \,$X\subset U_i$\;\!.

\vspace{2mm}If \,$\lambda\ge2$\;\!, \,then we have both
\,$X\subset U_1$ \,and \,$X\subset U_2$\,. \,On the other hand,
\,from \,$J_1\cap J_2=\,\emptyset$ \,we get \,$U_1\cap
U_2=\,\emptyset$\;\!. \,These will lead to a contradiction. Thus
we must have \,$\lambda=1$\;\!, \,which implies that \,$t=t_1=1$.
\,Let \,$u=u_1$ \,and \,$v=v_1$\;\!. \,Then the open arc \,$(u,v)$
\,satisfies the all conditions in Lemma \ref{open arc}, \,and the proof is
complete.
\end{proof}

As a corollary of Lemma \ref{open arc}, \,we have the following

\begin{prop}\label{empty period} \ {\it Let $f:G\to G$ be
a graph map\;\!,\, and\, $K$ be a connected closed subset of\; $G$
\;\!with $f(K)\cap K\ne\;\emptyset$\,. \,Let\, $X\!=\,O(K,f)$\, be
the orbit of \,$K$ under $f$. \,If\; $\overline X-X$ contains more
than one point, \,then \,$X\cap P(f)\ne\,\emptyset$}\;\!.
\end{prop}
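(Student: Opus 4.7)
The plan is to argue by contraposition: assume $X\cap P(f)=\emptyset$ and deduce that $\overline X-X$ contains at most one point, directly contradicting the hypothesis. The proposition is stated as a corollary of Lemma \ref{open arc}, so the main work is to verify that the hypotheses of that lemma hold under this contrapositive assumption, then read off the conclusion.

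The first step is to upgrade $X\cap P(f)=\emptyset$ to $K\cap EP(f)=\emptyset$. Indeed, if some $x\in K$ were eventually periodic, then a sufficiently high iterate $f^{\,n}(x)\in f^{\,n}(K)\subset X$ would be periodic, giving a point of $X\cap P(f)$, contradiction. Since $K\subset X$, we automatically also have $K\cap P(f)=\emptyset$. Next I would split on whether the increasing chain $K=X_0\subset X_1\subset X_2\subset\cdots$ stabilizes inside $X$. If $X=X_n$ for some $n\in\N$, then $X$ is a finite union of the closed sets $f^{\,i}(K)$, hence closed; so $\overline X-X=\emptyset$ and we are done immediately. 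Otherwise $X-X_n\ne\emptyset$ for every $n\in\N$, and together with the standing hypothesis $f(K)\cap K\ne\emptyset$ and the derived $K\cap EP(f)=\emptyset$, all three hypotheses of Lemma \ref{open arc}(2) are in force. That lemma then yields an open arc $(u,v)\subset X-V(G)$ with $\overline X-X=S=\{v\}\subset{\rm Fix}(f)$, so $|\overline X-X|=1$.

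In both cases $|\overline X-X|\le 1$, contradicting the assumption that $\overline X-X$ has more than one point; this establishes the proposition. I do not expect a serious obstacle here: the substance lies in Lemma \ref{open arc}, and the only conceptual point is the passage from $X\cap P(f)=\emptyset$ to $K\cap EP(f)=\emptyset$, which uses nothing more than forward invariance of $X$ and the definition of $EP(f)$. The case where the orbit stabilizes after finitely many iterates must be handled separately only because Lemma \ref{open arc} explicitly requires $X-X_n\ne\emptyset$ for all $n$; once that dichotomy is made, each branch is essentially a one-line argument.
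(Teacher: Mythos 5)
Your argument is correct and follows essentially the same route as the paper: both proofs are contrapositive reductions to Lemma \ref{open arc}(2), upgrading $X\cap P(f)=\emptyset$ to $K\cap EP(f)=\emptyset$ and reading off $|\overline X-X|\le 1$. The only cosmetic difference is that the paper dispenses with your stabilization case by noting directly that the hypothesis ($\overline X-X$ has more than one point) forces $X$ to be non-closed, hence $X-X_n\ne\emptyset$ for all $n$.
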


\begin{proof} Let $X_n=\,O_n(K,f)$ be defined
as in $(2.1)$\;\!. Then $X_n$ is closed. Since $X$ is not closed,
\,we have $X-X_n\ne\;\emptyset\,$ for any $n\in\N$\;\!. \,If
\;\!$X\cap P(f)=\,\emptyset$\;\!, \,then $X\cap
EP(f)=\,\emptyset$\;\!, \,and by Lemma \ref{open arc},\; $\overline X-X$ will
contain only one point.  \,Therefore, \,if\; $\overline X-X$
contains more than one point then we must have $X\cap
P(f)\ne\,\emptyset$\;\!.
\end{proof}

\begin{lem}\label{set orbit} {\it Let $f:G\to G$ be a
graph map\;\!,\, and\, $K$ be a connected closed subset of\, $G$
with \,$f(K)\cap K\ne\;\emptyset$\,. \,Let\, $X\!=\,O(K,f)$\,
and\, $X_n=\,O_n(K,f)$ \;\!be defined as in $(2.1)$. \,If\,
$X-X_n\ne\;\emptyset\,$ for any\, $n\in\N$\;\!,\; then the
following eight conditions are equivalent\,$:$

 $(1)$\ \ $K\cap EP(f)=\,\emptyset\,;$

 $(2)$\ \ $X\cap EP(f)=\,\emptyset\,;$

 $(3)$\ \ $X\cap P(f)=\,\emptyset\,;$

 $(4)$\ \ $X\cap R(f)=\,\emptyset\,;$

 $(5)$\ \ $X\cap\,\omega(f)=\,\emptyset\,;$

 $(6)$\ \ $X\cap\,\Omega(f)=\,\emptyset\,;$

 $(7)$\ \ $\overline X-X\ne\,\emptyset\,$, \,and
\;$\lim_{\;n\to\infty\,}d\big(f^{\;\!n}(x)\,,\;\!\overline
X-X\big)=\,0$ \,for any \;$x\in X\,;$

 $(8)$\ \ There exists an open arc \,$(u,v)\subset X-V(G)$ \,such
that \;$\overline X-X=\{v\}\subset {\rm Fix}(f)$\;\!,\,
$f(x)\in(x,v)$ for any \,$x\in[u,v)$\;\!, \;and
\;$X\subset\;\!{\rm Ab}\big((u,v)\;\!,f\;\!\big)$}\;\!.
\end{lem}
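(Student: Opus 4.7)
My plan is to close the cycle $(1)\Rightarrow(8)\Rightarrow(6)\Rightarrow(5)\Rightarrow(4)\Rightarrow(3)\Rightarrow(1)$ and fold in $(2)$ and $(7)$ via short side implications. The inclusions $P(f)\subset R(f)\subset\omega(f)\subset\Omega(f)$ and $P(f)\subset EP(f)$ immediately yield $(6)\Rightarrow(5)\Rightarrow(4)\Rightarrow(3)$ and $(2)\Rightarrow(3)$, and $K\subset X$ gives $(2)\Rightarrow(1)$. For the converses $(3)\Rightarrow(2)$ and $(3)\Rightarrow(1)$, I will use the $f$-invariance of $X$: any point of $EP(f)$ lying in $X$ (or in $K$) has some forward iterate in $P(f)$, which still lies in $X$, contradicting $(3)$. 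This disposes of $(1)\Leftrightarrow(2)\Leftrightarrow(3)$.

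Next, $(1)\Rightarrow(8)$ is precisely Lemma \ref{open arc}(2): its hypotheses are exactly the standing assumptions of the present lemma together with $(1)$, and its conclusion is $(8)$ word for word. The implication $(8)\Rightarrow(7)$ is a short monotonicity argument: for any $x\in X\subset{\rm Ab}((u,v),f)$ there is $k\ge 0$ with $f^{k}(x)\in(u,v)$, and under the linear order with $u<v$ the tail $\{f^{n}(x)\}_{n\ge k}$ lies in $(u,v)$ and is strictly increasing (because $f(y)\in(y,v)$ for $y\in[u,v)$), so it converges to a fixed point of $f$ in $[u,v]$; since that same inequality excludes fixed points in $[u,v)$, the limit must be $v$, giving $d(f^{n}(x),\overline X-X)\to 0$ with $\overline X-X=\{v\}\ne\emptyset$.

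The crucial step is $(8)\Rightarrow(6)$, which I will carry out in two stages. Stage one: I show $(u,v)\cap\Omega(f)=\emptyset$. Given $y\in(u,v)$, set $\alpha=d(f(y),y)>0$ (positive since $f(y)\in(y,v)$), and by continuity of $f$ choose $\varepsilon\in(0,\alpha/3)$ with $B(y,\varepsilon)\subset(u,v)$ and $f(B(y,\varepsilon))\subset B(f(y),\alpha/3)$. Every $z\in B(y,\varepsilon)$ then satisfies $f(z)>f(y)-\alpha/3=y+2\alpha/3>y+\varepsilon$; since $[u,v]$ is $f$-invariant, $f(z)\in[u,v]$, and by strict monotonicity $f^{n}(z)\ge f(z)>y+\varepsilon$ for all $n\ge 1$ (the case $f(z)=v$ being trivial). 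Hence $f^{n}(B(y,\varepsilon))\cap B(y,\varepsilon)=\emptyset$ and $y\notin\Omega(f)$. Stage two: for an arbitrary $x\in X$ I choose $k$ with $f^{k}(x)\in(u,v)$; since $f(\Omega(f))\subset\Omega(f)$ for any continuous $f$, the assumption $x\in\Omega(f)$ would force $f^{k}(x)\in\Omega(f)\cap(u,v)$, contradicting stage one. So $X\cap\Omega(f)=\emptyset$, i.e., $(6)$.

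This closes the cycle, and $(7)$ is incorporated via $(8)\Rightarrow(7)$ above together with the short implication $(7)\Rightarrow(3)$: a periodic $x\in X$ has a finite orbit in $X$, which is disjoint from the closed set $\overline X-X$, so the positive distance $d(O(x,f),\overline X-X)$ prevents $d(f^{n}(x),\overline X-X)\to 0$. I expect stage one of $(8)\Rightarrow(6)$ to be the main obstacle, since this is the only place where the monotone dynamics on $(u,v)$ must be coupled with uniform continuity of $f$ to certify that $B(y,\varepsilon)$ is a wandering neighborhood; the delicate point is to choose $\varepsilon$ small relative to the gap $\alpha=d(f(y),y)$ so that $f$ displaces the entire ball past $B(y,\varepsilon)$ in a single step.
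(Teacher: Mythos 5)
Your proof is correct and follows essentially the same route as the paper's: the only substantive implication, $(1)\Rightarrow(8)$, is delegated to Lemma \ref{open arc}(2) in both arguments, and the remaining steps are the trivial inclusions together with the observation that the monotone dynamics on $(u,v)$ excludes nonwandering points and forces every orbit in ${\rm Ab}\big((u,v),f\big)$ to converge to $v$. The only differences are cosmetic: you spell out the wandering-neighbourhood argument for $(u,v)\cap\Omega(f)=\emptyset$ and route $(7)$ back through $(3)$ rather than $(4)$, whereas the paper states these easy steps without detail.
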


\begin{proof} (6) $\Rightarrow$ (5) $\Rightarrow$
(4) $\Rightarrow$ (3)\, are trivial, \,since\, $\Omega(f)\supset
\omega(f)\supset R(f)\supset P(f)$\;\!.\, By the definition of the
orbit $X\!=\,O(K,f)$\;\!, \,(3) $\Leftrightarrow$ (2)
$\Leftrightarrow$ (1)\;\! are clear.\, Since $\overline
X-X\subset\partial_GX$ is a finite set,\, from the definition of
recurrent points we can directly derive \,(7) $\Rightarrow$ (4).
\,In Lemma 2.6 we have proved (1) $\Rightarrow$ (8). \,Thus it
suffices to show that (8) $\Rightarrow$ (6) \,and\, (8)
$\Rightarrow$ (7)\;\!.

\vspace{1mm}Suppose that (8) is true. Then
\,$(u,v)\cap\,\Omega(f)=\,\emptyset$\;\!, \,and
\;$\lim_{\;n\to\infty\,}f^n(x)=v$ \;for any\, $x\in[u,v)$\;\!.\,
Since \,$X\!\subset\;\!{\rm Ab}\big((u,v)\;\!,f\;\!\big)$\;\!,\,
we also have \,$X\cap\,\Omega(f)=\,\emptyset$\;\!,\, and
\;$\lim_{\;n\to\infty\,}f^n(x)=v$ \;for any\, $x\in X$\;\!. \,Hence
(6) and (7) are true.
\end{proof}

\vspace{4mm}From Lemma \ref{set orbit} we obtain the following corollary at
once.

\begin{cor}\label{orbit closed} {\it Let $f:G\to G$ be a
graph map\;\!,\, and\, $K$ be a connected closed subset of\, $G$
with \,$f(K)\cap K\ne\;\emptyset$\,. \,Let\, $X\!=O(K,f)$\, and\,
$X_n=\,O_n(K,f)$\;\! be defined as in $(2.1)$\;\!. \,If\, $K\cap
EP(f)=\,\emptyset\;\!$ and\: $K\cap\, \Omega(f)\ne\,\emptyset\,$,
\,then there exists an\, $n\in\N$\;\! such that\;\!
$X\!=X_{n\,},$\; and\;\! hence\, $X$ is closed in\, $G$.}
\end{cor}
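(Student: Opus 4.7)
The proof is essentially a one-line contradiction argument using Lemma~\ref{set orbit}. The plan is to assume the conclusion fails and feed the hypotheses into the equivalence of conditions (1) and (6) to contradict $K\cap\Omega(f)\ne\emptyset$.

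More precisely, I would argue by contradiction: suppose there is no $n\in\N$ with $X=X_n$. Since $X_0\subset X_1\subset X_2\subset\cdots\subset X$, this is equivalent to saying that $X-X_n\ne\emptyset$ for every $n\in\N$. We are in the exact setup of Lemma~\ref{set orbit}, which under this hypothesis asserts the equivalence of conditions (1)--(8). By hypothesis $K\cap EP(f)=\emptyset$, i.e., condition (1) of Lemma~\ref{set orbit} holds. Therefore condition (6) also holds, namely $X\cap\Omega(f)=\emptyset$. But $K\subset X$, and by hypothesis $K\cap\Omega(f)\ne\emptyset$, which is a contradiction.

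Hence there exists $n\in\N$ with $X=X_n$. To conclude that $X$ is closed in $G$, note that each $f^{\,i}(K)$ is the continuous image of the compact set $K$ and so is compact, hence closed in $G$. Therefore $X=X_n=\bigcup_{i=0}^{\,n}f^{\,i}(K)$ is a finite union of closed subsets of $G$ and is closed. There is no real obstacle here: all the content of the corollary is packaged inside Lemma~\ref{set orbit}, and the corollary is just the contrapositive of the implication (1)$\Rightarrow$(6) combined with the trivial observation that a finite orbit segment is closed.
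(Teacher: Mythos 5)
Your proof is correct and is exactly the argument the paper intends: the text introduces the corollary with ``From Lemma~\ref{set orbit} we obtain the following corollary at once,'' i.e.\ the contrapositive of the implication $(1)\Rightarrow(6)$ under the standing hypothesis $X-X_n\ne\emptyset$ for all $n$, plus the observation that a finite orbit segment $X_n=\bigcup_{i=0}^{n}f^{\,i}(K)$ is a finite union of compact sets and hence closed. Nothing to add.
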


\vspace{3mm}The following lemma is given in \cite{MS2}.

\begin{lem}\label{arc intersection} $($\cite[Lemma 2.3]{MS2}$)$\ \
{\it Let $f:G\to G$ be a graph map\;\!, and $A=[w,z]$ be an arc
in\, $G$. If\, $(w,z)\cap\,\big(V(G)\cup P(f)\big)=\,\emptyset$\,
and \,$(w,z)\cap\,R(f)\ne\,\emptyset$, then $O(w,f)\cap
(w,z)\ne\,\emptyset$. }
\end{lem}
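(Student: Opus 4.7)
I plan to argue by contradiction. Suppose $f^{\,n}(w) \notin (w, z)$ for every $n \geq 0$. Fix a recurrent point $x \in (w, z) \cap R(f)$. Since $(w, z) \cap P(f) = \emptyset$, $x$ is not periodic, and since $x \in \omega(x, f)$ there is a strictly increasing sequence $n_1 < n_2 < \cdots$ with $f^{\,n_k}(x) \to x$; in particular $f^{\,n_k}(x) \in (w, z)$ for all large $k$.

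The first step is a boundary-crossing argument. Since $(w, z) \cap V(G) = \emptyset$, the open arc $(w, z)$ lies inside a single edge of $G$, so its topological boundary in $G$ is exactly $\{w, z\}$. For large $k$ the connected image $f^{\,n_k}([w, x])$ meets $(w, z)$ at $f^{\,n_k}(x)$ and meets its complement at $f^{\,n_k}(w)$, so it must meet $\{w, z\}$. I then introduce the last-exit point
\[
a_k \;=\; \inf\bigl\{\, s \in [w, x] : f^{\,n_k}([s, x]) \subseteq (w, z)\,\bigr\};
\]
then $a_k \in [w, x)$, the restriction $f^{\,n_k}|_{(a_k, x]}$ maps into $(w, z)$, and continuity forces $f^{\,n_k}(a_k) \in \{w, z\}$. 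Passing to a subsequence (and, if needed, running the symmetric analysis on the arc $[x, z]$ to dispose of the case $f^{\,n_k}(a_k) = z$), I assume $f^{\,n_k}(a_k) = w$ for every $k$ and $a_k \to a^* \in [w, x]$.

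The main obstacle is extracting the contradiction. The subarc $f^{\,n_k}((a_k, x]) \subseteq (w, z)$ has left-hand limit $w$ and right endpoint $f^{\,n_k}(x) \to x$, so for every compact subarc $I \subset (w, x)$ and every large $k$, each point of $I$ has an $f^{\,n_k}$-preimage in $(a_k, x] \subset (w, z)$. The challenge is to convert this preimage abundance into an actual periodic point of $f$ inside $(w, z)$, contradicting $(w, z) \cap P(f) = \emptyset$. My proposed route is to apply Proposition~\ref{empty period} to a judiciously chosen connected closed set $K \subseteq [w, x]$ with $f^{\,m}(K) \cap K \neq \emptyset$ for an appropriate return iterate $m$ and with $K \cap EP(f) = \emptyset$; arranging $K$ so that the periodic point produced by Proposition~\ref{empty period} is forced to sit inside $(w, z)$—rather than at $\{w, z\}$ or off the arc $A$ entirely—is the delicate step on which the proof hinges.
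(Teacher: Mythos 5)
The paper does not prove this lemma at all: it is imported verbatim from \cite[Lemma 2.3]{MS2}, so there is no internal proof to compare yours against. Judged on its own terms, your write-up sets the stage correctly (the recurrent point $x$, the return times $n_k$, the fact that $(w,z)$ lies in a single edge so $\partial_G(w,z)=\{w,z\}$, and the last-exit point $a_k$ with $f^{\,n_k}(a_k)\in\{w,z\}$ and $f^{\,n_k}\bigl((a_k,x]\bigr)\subseteq(w,z)$), but it stops exactly where the proof has to happen, and you say so yourself. That is a genuine gap, not a routine verification left to the reader.

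Two concrete problems. First, Proposition~\ref{empty period} is the wrong tool for the final step: it only yields $O(K,f)\cap P(f)\ne\emptyset$, i.e.\ a periodic point somewhere in the forward orbit of $K$, which may sweep through all of $G$; since the hypothesis forbids periodic points only in $(w,z)$ and $P(f)$ can be large elsewhere, this is no contradiction unless the periodic point is localized inside $(w,z)$ --- and nothing in Proposition~\ref{empty period} (which moreover needs $\overline{X}-X$ to contain at least two points, an extra condition you have not arranged) does that localization. The natural way to localize is an intermediate-value / covering argument for the return map on the ordered arc $[w,z]$: from $f^{\,n_k}(a_k)=w\le a_k$ and $f^{\,n_k}\bigl((a_k,x]\bigr)\subseteq(w,z)$ one compares $f^{\,n_k}(x)$ with $x$ and extracts a fixed point of $f^{\,n_k}$ in $(a_k,x]\subseteq(w,z)$ when $f^{\,n_k}(x)$ lies on the far side of $x$; the remaining sign pattern requires a further (nontrivial) case analysis. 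Second, the case $f^{\,n_k}(a_k)=z$ cannot be ``disposed of by symmetry'': the hypothesis of your contradiction argument ($f^{\,n}(w)\notin(w,z)$ for all $n$) and the desired conclusion both concern the orbit of $w$ only, and you have no corresponding control on the orbit of $z$, so the analysis on $[x,z]$ is not the mirror image of the one on $[w,x]$. As it stands the proposal is an honest plan with the decisive step missing.
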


\begin{cor}\label{empty eventual} {\it Let $f:G\to G$ be a
graph map\;\!, \,and $A=[x,y]$ be an arc in $G$. \,If\,
$(x,y)\,\cap\,\big(V(G)\cup P(f)\big)=\,\emptyset\,$ and
\;$(x,y)\,\cap\,R(f)\ne\;\emptyset$\,, \,then\, $A\cap
EP(f)=\,\emptyset\,.$ }
\end{cor}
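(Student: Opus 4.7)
The plan is to argue by contradiction, iterating Lemma \ref{arc intersection} against the finiteness of an eventually periodic orbit. Suppose some $z^{\ast}\in A\cap EP(f)$ exists, so $O(z^{\ast},f)$ is finite. Fix a recurrent point $r\in(x,y)\cap R(f)$, which exists by hypothesis. The preliminary observation I rely on throughout is that any point $q$ which is simultaneously recurrent and eventually periodic must be periodic: if $|O(q,f)|<\infty$ then $\omega(q,f)$ is the terminal periodic cycle of the orbit, so $q\in\omega(q,f)$ forces $q\in P(f)$. Since $(x,y)\cap P(f)=\emptyset$, this rules out $r=z^{\ast}$ and — more importantly — will prevent any of the new points produced by the induction from coinciding with $r$.

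After possibly relabelling the endpoints of $A$ (the statement of Lemma \ref{arc intersection} is symmetric in $w$ and $z$), I may assume $z^{\ast}$ lies on the $x$-side of $r$ along $A$, i.e., $r\in(z^{\ast},y)$, where $[z^{\ast},y]$ denotes the sub-arc of $A$. This sub-arc inherits the hypotheses of Lemma \ref{arc intersection}: its interior avoids $V(G)\cup P(f)$ and contains $r\in R(f)$. Applying Lemma \ref{arc intersection} with $w=z^{\ast}$ furnishes a point in $O(z^{\ast},f)\cap(z^{\ast},y)$, and since $O(z^{\ast},f)$ is finite I may choose $z_1\in O(z^{\ast},f)\cap(z^{\ast},y)$ closest to $z^{\ast}$ along $A$.

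The inductive step is the heart of the argument: the same setup now holds for the shorter arc $[z_1,y]$. The only nontrivial point is that $r\in(z_1,y)$. Indeed, $r\neq z_1$ by the preliminary observation ($z_1\in O(z^{\ast},f)\subset EP(f)$), so the only alternative would be $r\in(z^{\ast},z_1)$; but then Lemma \ref{arc intersection} applied to $[z^{\ast},z_1]$ would produce an element of $O(z^{\ast},f)\cap(z^{\ast},z_1)$, contradicting the minimal choice of $z_1$. With $r\in(z_1,y)$ secured, the same reasoning applied to $[z_1,y]$ yields $z_2\in O(z_1,f)\cap(z_1,y)\subset O(z^{\ast},f)$ closest to $z_1$, then $r\in(z_2,y)$, and so on. This generates an infinite sequence $z_1,z_2,z_3,\dots$ of strictly monotone — hence pairwise distinct — points along $A$ all lying in the finite set $O(z^{\ast},f)$, which is the desired contradiction.

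The main potential obstacle — the position of $z^{\ast}$ — is mild. If $z^{\ast}\in\{x,y\}$ the argument still applies, with the sub-arc $[z^{\ast},y]$ being $A$ itself (or its reverse); and if $z^{\ast}$ lies on the $y$-side of $r$, the symmetric argument on $[x,z^{\ast}]$, with Lemma \ref{arc intersection} invoked using $z^{\ast}$ as the anchor endpoint, produces the contradiction in the same way. So the only real work is the inductive step, which the above minimal-choice trick handles cleanly.
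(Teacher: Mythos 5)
Your proof is correct, but it takes a genuinely different route from the paper's. The paper picks $w\in A\cap EP(f)$, chooses $n$ with $f^{\,n}(w)\in P(f)$ (so that $f^{\,i}(w)\in P(f)\subset G-(x,y)$ for all $i\ge n$), and then applies Lemma \ref{arc intersection} \emph{once}, to the iterate $f^{\,n}$ on a sub-arc $[w,z]$ containing a recurrent point; this uses the Erd\H{o}s--Stone facts $R(f)=R(f^{\,n})$ and $P(f)=P(f^{\,n})$ quoted in the preliminaries, and the resulting point of $O(w,f^{\,n})\cap(w,z)$ is an iterate $f^{\,jn}(w)$ with $jn\ge n$, giving an immediate contradiction. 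You instead stay with $f$ itself and iterate Lemma \ref{arc intersection}, using the finiteness of $O(z^{\ast},f)$ to produce an infinite strictly monotone sequence of orbit points; the ``closest point'' choice and the observation that $R(f)\cap EP(f)\subset P(f)$ correctly keep the recurrent point $r$ on the far side of each new point, so the induction goes through. What the paper's approach buys is brevity (one application of the lemma, no bookkeeping); what yours buys is that it avoids invoking $R(f)=R(f^{\,n})$ entirely, at the cost of a longer induction. Both arguments are valid.
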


\begin{proof} If $A\cap\;\!EP(f)\ne\,\emptyset$,
then there exist $w\in A$ and $n\in\N$ such that $f^{\;\!n}(w)\in
P(f)$\;\!, which with $(x,y)\cap P(f)=\,\emptyset\,$ implies that
$f^{\,i}(w)\notin(x,y)$ \;\!for any $i\ge n$\;\!. On the other
hand, take a point $z\in[x,y]-\{w\}$ such that $(w,z)\cap
R(f)\ne\,\emptyset$\;\!. Then $(w,z)\cap
R(f^{\;\!n})\ne\,\emptyset$\;\!, \,and by Lemma \ref{arc intersection} we get
$O(w,f^{\;\!n})\cap (w,z)\ne\,\emptyset$\;\!. But this contradicts
that $f^{\,i}(w)\notin(x,y)$ \;\!for any $i\ge n$\;\!. Thus we
must have\, $A\cap EP(f)=\,\emptyset$\;\!. \end{proof}

%%%%%%%%%%%%%%%%%%%%%%%%%%%%%%%%%%%%%%%%%%%%%%%%%%%%%%%%%%%%%%%%%%
\section{Structures of $R(f)-\overline{P(f)}$
for graph maps $f$}

Now we list several results coming from \cite{MS1} in the following theorem,
 which is a key ingredient in the proof of the main theorem.

\begin{thm}\label{eventual invariant} {\it Let $f:G\to G$ be a
graph map\;\! without a periodic point. Then there exist an
\,$n\in\N$\,, \,a subgraph $X$ \!of \;$G$ and a circle \,$Q\subset
X$ such that the following items hold:

$(1)$  \,$G\,\supsetneq\,f(G)\,\supsetneq\,\cdots\,\supsetneq
\,f^{\,n-2}(G)\,\supsetneq
\,f^{\,n-1}(G)\,=X=f(X)$\;\!;

$(2)$  $f|X$ has a unique minimal set $M$ which is totally minimal;

$(3)$  $\Omega(f|X)=AP(f|X)=M\subset Q$};

$(4)$ for each $x\in X$, we have $\lim_{m\rightarrow\infty}d(f^{m}(x), M)=0$;

$(5)$ $f$ is topologically semi-conjugate to an
irrational rotation of \;\!the unit circle $S^1$.
\end{thm}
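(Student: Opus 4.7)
The plan is to establish the five items essentially in the order given, combining monotonicity of iterate subgraphs with classical facts about circle homeomorphisms. First, for item (1), I would observe that $G\supset f(G)\supset f^{\,2}(G)\supset\cdots$ is a decreasing nested family of compact connected subsets of $G$, and each $f^{\,k}(G)$ is itself a subgraph of $G$ after adjoining finitely many new vertices. The combinatorial complexity --- say, the pair consisting of the number of edges and the total branching number $\xi$ of $(2.4)$ --- is non-increasing under $f$ and takes values in a well-ordered set, so the chain must stabilize. Let $n$ be the least index with $f^{\,n}(G)=f^{\,n-1}(G)$ and set $X=f^{\,n-1}(G)$, so $f(X)=X$. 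Moreover, $X$ must contain a circle, for otherwise $X$ would be a tree and the classical tree fixed point theorem would force a fixed point in $X$, contradicting $P(f)=\emptyset$.

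For items (2)--(4), I would produce a minimal set $M\subset X$ by Zorn's lemma applied to non-empty closed $f$-invariant subsets of $X$. Since $P(f)=\emptyset$, $M$ is infinite and perfect. The delicate claim is that $M$ lies inside a single circle $Q\subset X$ and is totally minimal. For this, I would apply Corollary \ref{orbit closed} and Lemma \ref{open arc} to suitable connected closed pieces $K\subset X$ meeting $M$: the absence of periodic points, together with the finiteness of the branching set of $X$ (via Lemma \ref{boundary number}), forces any such orbit to be closed and to avoid branch points of $X$ in the limit, ruling out any ``branching'' behavior of $M$ at a vertex. Total minimality of $M$ then follows by observing that if $M$ decomposed under $f^{\,k}$ into cyclically permuted proper minimal pieces, the cyclic permutation combined with the circle structure would produce a periodic orbit, contradicting $P(f)=\emptyset$. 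Uniqueness of $M$ in $X$ likewise rules out two disjoint minimal sets. Items (3) and (4) are then consequences: for any $x\in X$, $\omega(x,f)$ is a non-empty closed invariant subset of $X$, hence contains a minimal set, which by uniqueness equals $M$; consequently $d(f^{\,m}(x),M)\to 0$, and $\Omega(f|X)=AP(f|X)=M$.

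Finally, for item (5), I would construct the semiconjugacy by collapsing each connected component of $Q\setminus M$ (together with any tree-like pieces of $X\setminus Q$) to a point, obtaining a quotient $\pi:X\to S^1$ that pushes $f|X$ down to a homeomorphism $\tilde f:S^1\to S^1$ with no periodic points. Poincar\'e's classical theorem on circle homeomorphisms then identifies $\tilde f$ as semiconjugate to an irrational rotation, and composing $\pi$ with a retraction from $G$ onto $X$ afforded by the stabilization chain of item (1) yields the required semiconjugacy on all of $G$. The main obstacle throughout is item (2), namely showing that $M$ is confined to a single circle and is totally minimal; this is where the absorption machinery developed in Section 2 and the no-periodic-point hypothesis must be combined most delicately, and it is the technical heart of the theorem as proved in \cite{MS1}.
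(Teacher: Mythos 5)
This theorem is not proved in the present paper at all: as the remark following its statement explains, it is assembled by citation from \cite{MS1} (Theorem 4.2, Claim 24, Corollary 4.4 and Theorem 4.3 there). So the question is whether your sketch is a self-contained substitute, and it is not: its two hardest steps are respectively incorrect and missing. For item (1), you claim the chain $G\supset f(G)\supset f^{\,2}(G)\supset\cdots$ stabilizes because a discrete complexity invariant (edge count, $\xi$) is non-increasing and well-ordered. That inference is false: a strictly decreasing nested chain of compact connected subsets of a graph can have constant combinatorial complexity, e.g.\ $[0,1]\supsetneq[0,1/2]\supsetneq[0,1/4]\supsetneq\cdots$ inside a single edge, so monotonicity of such an invariant forces nothing. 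Note also that your stabilization argument nowhere uses $P(f)=\emptyset$, yet without that hypothesis the conclusion fails (an interval map with an attracting endpoint has $f^{\,n}(G)$ strictly decreasing forever). The finite-time stabilization $f^{\,n}(G)=f^{\,n-1}(G)$ is essentially the main theorem of \cite{MS1} and rests on the entire structure theory built there; it cannot be obtained by counting edges.

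For items (2)--(3), you correctly identify the confinement of $M$ to a single circle, its uniqueness in $X$, and its total minimality as the technical heart, but you then supply no argument: ``apply Corollary \ref{orbit closed} and Lemma \ref{open arc} to suitable connected closed pieces'' does not specify the pieces, and neither result produces a circle containing $M$ or excludes a second minimal set --- indeed part (2) of Lemma \ref{open arc} can never be invoked here, since its conclusion manufactures a fixed point $v$, which is impossible when $P(f)=\emptyset$; what these lemmas give in this setting is only that orbits of connected closed sets meeting $\Omega(f)$ are closed. The sentence ``uniqueness of $M$ likewise rules out two disjoint minimal sets'' is circular. A smaller but genuine flaw occurs in (5): there is no retraction of $G$ onto $X$ that commutes with $f$, so one cannot extend the semiconjugacy that way; the correct extension is $\varphi(x)=h^{-(n-1)}\varphi_X\big(f^{\,n-1}(x)\big)$, exactly as in the proof of Theorem \ref{irrational rotation} of this paper. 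In sum, the proposal is an outline of what must be proved, with item (1) argued by an invalid stabilization principle and the circle-confinement and uniqueness in item (2) deferred back to \cite{MS1}.
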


\begin{rem}
The conclusion $(1)$ of Theorem \ref{eventual invariant} follows from Theorem \cite[Theorem 4.2]{MS1}
and Definitions 3.3 and 3.4 in \cite{MS1}; the conclusions (2) and (3) follow from Claim 24 and Corollary 4.4 in \cite{MS1};
the conclusion $(5)$ is implied by \cite[Theorem 4.3]{MS1}.
Though the conclusion $(4)$ of Theorem \ref{eventual invariant} is not explicitly stated
in \cite{MS1}, it can be seen easily from \cite[Theorem 4.2]{MS1} and the constructions in Section $3$ of \cite{MS1}.
\end{rem}

\begin{defn}\label{component cyclic} Let $f:G\to G$ be a
graph map\;\!. A subset $X$ of $G$ is called a
{\it{component-cyclic\! $f$-invariant set}} \;if $X$ has
only finitely many connected components $X_1\,,\,\cdots,\,X_k$
\;and $f(X_{i})\subset X_{i+1\,({\rm mod}\;k)}$ \,for every\,
$i\in\N_k$\;\!.\, A component-cyclic $f$-invariant set $X$ is
called a {\it{component-cyclic strongly $f$-invariant set}}
\,if $f(X)=X$.
\end{defn}

\begin{lem}\label{union component} {\it Let $f:G\to G$ be a
graph map\;\!, \,and \,$X$,\,$Y$\! be component-cyclic
$f$-invariant sets. \,If\:\! $X\!\cap Y\!\ne\emptyset\;\!$, \,then
\:\!$X\!\cup Y\!$ is also a \;\!component-cyclic $f$-invariant
set}\;\!.
\end{lem}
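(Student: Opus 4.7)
The plan is to describe the components of $X\cup Y$ through an auxiliary bipartite intersection graph and then verify that $f$ cyclically permutes them; the main point turns out to be showing this permutation is a single cycle.

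Let $X_1,\dots,X_k$ and $Y_1,\dots,Y_\ell$ be the components of $X$ and $Y$, with $f(X_i)\subset X_{i+1\,({\rm mod}\,k)}$ and $f(Y_j)\subset Y_{j+1\,({\rm mod}\,\ell)}$. I would form a bipartite graph $\Gamma$ on the vertex set $\{X_1,\dots,X_k\}\cup\{Y_1,\dots,Y_\ell\}$, joining $X_i$ to $Y_j$ by an edge whenever $X_i\cap Y_j\ne\emptyset$. Because distinct $X_i$'s (resp.\ distinct $Y_j$'s) are disjoint connected components of the same set, a routine connectedness argument identifies the components of $X\cup Y$ with the connected components of $\Gamma$: each component of $X\cup Y$ is precisely the union of the atoms belonging to a single $\Gamma$-component. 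This immediately gives that $X\cup Y$ has at most $k+\ell$ connected components, and $f(X\cup Y)=f(X)\cup f(Y)\subset X\cup Y$ is clear.

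Next, I would introduce the bijection $\phi$ on the vertex set of $\Gamma$ defined by $\phi(X_i)=X_{i+1\,({\rm mod}\,k)}$ and $\phi(Y_j)=Y_{j+1\,({\rm mod}\,\ell)}$. The key edge-preservation observation is that whenever $X_i\cap Y_j\ne\emptyset$, the identity $f(X_i\cap Y_j)\subset f(X_i)\cap f(Y_j)\subset\phi(X_i)\cap\phi(Y_j)$ forces $\phi(X_i)\cap\phi(Y_j)\ne\emptyset$, so $\phi$ is a graph automorphism of $\Gamma$ and therefore permutes the connected components of $\Gamma$, equivalently the components of $X\cup Y$. For each component $C$ of $X\cup Y$, the image $f(C)$ is connected, and each atom $A\subset C$ satisfies $f(A)\subset\phi(A)\subset\phi(C)$, so $f(C)\subset\phi(C)$; the right-hand side is again a component.

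The hard part is to show that $\phi$ acts on the components of $X\cup Y$ as a single cycle rather than as a product of several disjoint cycles. This is where the hypothesis $X\cap Y\ne\emptyset$ is essential: pick $(i_0,j_0)$ with $X_{i_0}\cap Y_{j_0}\ne\emptyset$ and iterate the edge-preservation calculation to obtain $X_{i_0+n\,({\rm mod}\,k)}\cap Y_{j_0+n\,({\rm mod}\,\ell)}\ne\emptyset$ for every $n\in\Z_+$. Since the first coordinate sweeps through all of $\N_k$ as $n$ varies, every $X_i$ meets some $Y_j$; in particular every $\Gamma$-component contains at least one $X_i$. Letting $C_0$ denote the component of $X\cup Y$ that contains $X_{i_0}$, we have $X_{i_0+n\,({\rm mod}\,k)}\in\phi^n(C_0)$ for all $n$, so the $\phi$-orbit of $C_0$ already absorbs every $X_i$ and hence every component of $X\cup Y$. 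Thus $\phi$ is transitive on components, and relabelling them along this orbit as $Z_1,\dots,Z_r$ yields $f(Z_m)\subset Z_{m+1\,({\rm mod}\,r)}$, which is exactly the component-cyclic structure required.
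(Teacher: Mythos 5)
Your argument runs on exactly the same engine as the paper's: from one nonempty intersection $X_{i_0}\cap Y_{j_0}\ne\emptyset$ you push forward under $f$ to get $X_{i_0+n}\cap Y_{j_0+n}\ne\emptyset$ for all $n$, and this is what forces the pieces of $X\cup Y$ into a single cycle. The paper does the same thing with less scaffolding: after re-indexing so that $X_i\cap Y_i\ne\emptyset$ for every $i$ (indices extended periodically), it lets $Z_i$ be the topological connected component of $X\cup Y$ containing the connected set $X_i\cup Y_i$, observes $f(Z_i)\subset Z_{i+1}$, and notes that $Z_{i+k}=Z_i$ for a common divisor $k$ of the two periods. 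So in substance the two proofs coincide; your bipartite graph $\Gamma$ and the shift $\phi$ are a formalization of that argument.

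There is, however, one load-bearing step you dismiss as routine that is not: the identification of the connected components of $X\cup Y$ with the components of $\Gamma$. The definition of a component-cyclic $f$-invariant set does not require the set to be closed, and for non-closed atoms two disjoint connected sets can have a connected union (e.g.\ $[0,1/2)$ and $[1/2,1]$), so the union of the atoms in one $\Gamma$-component need not be relatively clopen in $X\cup Y$, and the topological components can be strictly coarser than the $\Gamma$-components. Since your finiteness, automorphism, and transitivity arguments all take place at the level of $\Gamma$-components, the conclusion about the actual components of $X\cup Y$ needs an extra step (either show that the topological components form a $\phi$-compatible coarsening of a single $\phi$-cycle, hence are themselves cyclically permuted, or work with topological components from the outset as the paper does). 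If $X$ and $Y$ are closed --- which is the case in every application in the paper --- the atoms are compact, disjoint compact sets lie at positive distance, and your identification becomes genuinely routine. A smaller slip: ``every $\Gamma$-component contains at least one $X_i$'' does not follow from ``every $X_i$ meets some $Y_j$''; you need the symmetric fact that every $Y_j$ meets some $X_i$, which comes from the second coordinate $j_0+n$ sweeping all of $\N_\ell$, not the first coordinate sweeping $\N_k$.
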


\begin{proof} Suppose that $X$ has \,$n$
\,connected components $X_1\,,\,\cdots,\,X_n$\;\! with
$f(X_{i})\subset X_{i+1\,({\rm mod}\;n)}$ \,for\, $i\in\N_{n\,}$,
\;and\, $Y$ has \,$m$\;\! connected components
\,$Y_1\,,\,\cdots,\,Y_m$\;\! with $f(Y_{i})\subset Y_{i+1\,({\rm
mod}\;m)}$ \,for\, $i\in\N_m$\;\!.\, Since
$X\cap\,Y\ne\,\emptyset\;\!$,\, we may assume that
$X_1\cap\,Y_1\ne\,\emptyset\;\!$.\, For any $i\in\N$\;\!,\, write
$X_{i+n}=X_i$\, and\, $Y_{i+m}=Y_{i\,}$. Then
$X_i\cap\,Y_i\ne\,\emptyset$\;\!. \,Let $Z=X\cup\,Y$ and let $Z_i$
be the connected component of $Z$ containing $X_i\cup\,Y_i$\;\!.
Then\, $f(Z_{i})\subset Z_{i+1\,}$, \,and there is a common
factor\;\! $k$\, of\;\! $n$\;\! and\;\! $m$\;\! such that
$Z_{i+k}=Z_{i\,}$ for all\;\! $i\in\N$\;\!. Thus $Z=X\cup\,Y$ is
also a component-cyclic $f$-invariant set.
\end{proof}

\begin{lem}\label{component exist} {\it Let $f:G\to G$ be a
graph map\;\!. \,Then for any \,$x\in R(f)-\overline{P(f)}$
\,there is a component-cyclic $f$-invariant closed set \,$Y$\!
such that \,$x\in Y\!\subset G-EP(f)$}\;\!.
\end{lem}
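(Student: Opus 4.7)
The plan is to construct $Y$ by iterating $f$ on a small closed arc around a non-vertex point of the orbit of $x$, and then extracting the cyclic subset of the components of the resulting closed $f$-invariant set. First I would reduce to a non-vertex case. If $x\in EP(f)$ then $\omega(x,f)$ is contained in a finite periodic orbit, forcing $x\in\omega(x,f)\subset\overline{P(f)}$, contrary to hypothesis; so $O(x,f)$ is infinite and, $V(G)$ being finite, some iterate $x_0:=f^N(x)$ lies outside $V(G)$. Since $R(f)$ is forward $f$-invariant and $\overline{P(f)}$ is closed and $f$-invariant, a short check gives $x_0\in R(f)-\overline{P(f)}$; moreover, any $f$-invariant closed set $Y$ containing $x_0$ automatically contains $\omega(x_0,f)=\omega(x,f)\ni x$. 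So it suffices to produce a component-cyclic $f$-invariant closed subset of $G-EP(f)$ containing $x_0$.

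Since $x_0\notin V(G)\cup\overline{P(f)}$, I would pick a small closed arc $A=[p,q]$ with $x_0\in(p,q)$ and $A\cap(V(G)\cup\overline{P(f)})=\emptyset$. Because $(p,q)\cap R(f)\ni x_0$ and $(p,q)\cap(V(G)\cup P(f))=\emptyset$, Corollary \ref{empty eventual} yields $A\cap EP(f)=\emptyset$. Using the recurrence of $x_0$ together with $R(f)=R(f^M)$, choose $M\in\N$ with $f^M(x_0)\in(p,q)$, so $f^M(A)\cap A\ne\emptyset$. Noting that $EP(f^M)=EP(f)$ and $x_0\in R(f^M)\subset\Omega(f^M)$, Corollary \ref{orbit closed} applied to $f^M$ with $K=A$ produces $n\in\N$ such that $X:=O(A,f^M)=\bigcup_{i=0}^{n}f^{Mi}(A)$ is closed and $f^M$-invariant; and $X\cap EP(f)=\emptyset$ follows from the $f^M$-preimage-invariance of $EP(f)$.

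Finally, put $Z:=\bigcup_{j=0}^{M-1}f^j(X)$. This is closed, $f$-invariant (using $f^M(X)\subset X$), a finite union of connected closed sets, and disjoint from $EP(f)$. Let $C_j$ denote the component of $Z$ containing $f^{N+j}(x)$; since $f(C_j)$ is connected, contained in $Z$, and meets $C_{j+1}$, we have $f(C_j)\subset C_{j+1}$. Because $Z$ has only finitely many components, $\{C_j\}$ is eventually periodic: there exist $s\ge0$ and a minimal $k\in\N$ with $C_{s+k}=C_s$, and then $C_s,C_{s+1},\dots,C_{s+k-1}$ are distinct and cyclically permuted by $f$. The set $Y:=C_s\cup C_{s+1}\cup\cdots\cup C_{s+k-1}$ is therefore component-cyclic $f$-invariant closed, satisfies $Y\subset Z\subset G-EP(f)$, and contains $x$ by the $\omega$-limit argument of the first paragraph.

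The main obstacle is the careful verification of the hypotheses of Corollary \ref{orbit closed} together with the identities $EP(f^M)=EP(f)$ and $R(f^M)=R(f)$; these are precisely what let ``disjointness from $EP$'' propagate from the small arc $A$ through the $f^M$-orbit $X$ to the whole set $Z$. Once that is in place the extraction of the cyclic family $C_s,\dots,C_{s+k-1}$ is essentially forced, and the $\omega$-limit argument neatly handles the possibility that $x$ itself is a vertex of $G$.
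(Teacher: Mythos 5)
Your proof is correct and follows essentially the same route as the paper's: take an arc through a recurrent point of the orbit, apply Corollary \ref{empty eventual} to exclude $EP(f)$, apply Corollary \ref{orbit closed} to a power of $f$ to make the orbit of the arc closed, and then read off the cyclic permutation of the connected components. The only cosmetic differences are that the paper takes the arc $A=[x,y]$ with $x$ itself as an endpoint (so it needs neither the reduction to a non-vertex iterate $f^N(x)$ nor the $\omega$-limit argument to recover $x\in Y$) and observes directly that $Y=O(A,f)$ is already component-cyclic, whereas you extract the periodic part of the component sequence.
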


\begin{proof} Since \,$x\in
R(f)-\overline{P(f)}$\;\!, there is an arc \,$A=\,[x,y]\subset
G-P(f)$\, such that\, $(x,y]\cap V(G)=\,\emptyset\;$ and\,
$(x,y)\cap R(f)\supset(x,y)\cap\;\!O(x,f)\ne\,\emptyset\,$.
\,Hence, there is an\;\! $n\in\N$ such that $f^{\;\!n}(A)\cap
A\ne\,\emptyset\;\!$. \,Let $Y\!=\,O(A,f)$\, and\,
$Z=\,O(A,f^{\;\!n})$ \,be the orbits of $A$ under $f$ and
$f^{\;\!n}$, respectively. Then $f(Y)\subset
Y\!=\,\bigcup_{\:i=0}^{\,n-1}f^{\,i}(Z)$\;\!,
\,$f^{\;\!n}(Z)\subset Z$, \;and $Z$\;\! is connected. \,By
Corollary \ref{empty eventual}, \,we have $A\cap EP(f)=\,\emptyset$\;\!. Hence
\;$Y\!\subset G-EP(f)$\;\!. \,By Corollary \ref{orbit closed}, \,$Z\;\!$ is
closed in \;\!$G$.\, So\;\! $Y\!$ \,is also closed. \,Let $k$\;\!
be the number of connected components of \;\!$Y$\!. Then $k\;\!$
is a factor of $\;\!n$\;\!. \,Let \;\!$Y_1$\;\! be the connected
component of\, $Y$ containing \,$Z$\;\!. Then\, $x\in Y_1$ \,and\,
$f^{\;\!k}(Y_1)\subset Y_1$\,. \,Write\, $Y_i=f^{\,i-1}(Y_1)$
\,for \,$i=2\,,\,\cdots,\,k$\;\!. \,Then\,
$Y_1\,,\,Y_2\,,\,\cdots,\,Y_k$ \,are just the \,$k$ connected
components of\, $Y$\!, \,and $f(Y_{i})\subset Y_{i+1\,({\rm
mod}\;k)}$ \,for every\, $i\in\N_k$\;\!. Thus \,$Y$ is a
component-cyclic $f$-invariant set.
\end{proof}

\vspace{3mm}For any graph map $f:G\to G$ \,and any\,
$n\in\N$\;\!,\, write \vspace{-2mm}
$$
EP_n(f)=\;\!\{x\in G:\ {\rm the\ orbit}\;\,O(x,f)\;\,{\rm
contains\ at\ most}\;\,n\;\,{\rm points}\}.    \vspace{-2mm}
$$
Then \;\!$EP_n(f)$\;\! is a closed subset of \,$G$,\, and
\;\!$EP_n(f)\subset EP(f)$\;\!. For any connected open set \,$U$
in\, $G$, \,let\, $\xi(U)$\, be defined as in (2.4)\;\!. The
following proposition describes the structures of component-cyclic
$f$-invariant closed sets without periodic points.

\begin{prop}\label{sub main} {\it Let $f:G\to G$ be a
graph map\;\!, \,and \;$Y\!\subset G-P(f)$ \;\!be a
component-cyclic\;\!\! $f$-invariant closed set with \,$k$
connected components\, $Y_1\,,\,\cdots,\,Y_k$ \,such that\:\!
$f(Y_{i})\subset Y_{i+1\,({\rm mod}\;k)}$ \,for each\,
$i\in\N_k$\;\!.\, Then

\vspace{1mm}$(1)$\ \ There exists a component-cyclic \,strongly\,
$f$-invariant closed set \,$X$ with \,$k$\;\! connected
components\, $X_1\,,\,\cdots,\,X_k$ \,such that \,$X_{i}\subset
Y_{i}$ \,for each\; $i\in\N_k$\;$;$

\vspace{1mm}$(2)$\ \ For any \,$i\in\N_k$\;\!, \,let \;$U_i={\rm
Ab}(X_{i\,},f^{\,k})$ \,be the main absorbed set by\;\! $X_i$
under $f^{\,k}$\!, \,and let\;
$U\!=\,\bigcup_{\;\!i=1}^{\,k}U_i$\;\!. Then \,$Y_i\subset
U_i$\;\!, \,$f(U_i)\subset U_{i+1\,({\rm mod}\;k)}$\;\!,
\,$Y\!\subset U\!={\rm{Ab}}(X,f)$\;\!, \,and \;$U$ is a
component-cyclic $f$-invariant open set with \,$k$\;\! connected
components\, $U_1\,,\,\cdots,\,U_k$\;$;$

\vspace{1mm}$(3)$\ \ For any \,$i\in\N_k$\;\!,\, $X_i$\;\!
contains at\;\! least one circle\, $C_i$\;\!,\, and
$f^{\,k}|X_i$\;\! has a unique minimal set \;\!$M_i$\;\!,\, which
satisfy\; $\omega(f)\cap\;\!U_i=AP(f^{\,k}|X_i)=\,M_i\subset C_i$
\,and\; $\Omega(f)\cap\;\!U_i=\,\Omega(f^{\,k})\cap X_i\subset
M_i\cup\;\!\partial_GX_i$\;\!. This $M_i$ is also a unique minimal
set of\;\! $f^{\,k}|Y_i$ \;\!and of\;\! $f^{\,k}|U_i\;;$

\vspace{1mm}$(4)$\ \ Write\,
$M=\,\bigcup_{\;\!i=1}^{\,k}\;\!M_i$\;\!. Then \
$\omega(f)\cap\;\!U=AP(f|X)=AP(f^{\,k}|X)=M$,\;
$\Omega(f)\cap\;\!U\!=\,\Omega(f)\cap X=\,\Omega(f^{\,k})\cap
X\subset\,M\cup\;\!\partial_GX$\;\!, \,and\, $M$ is\;\! a unique
minimal \;\!set of any of \;\!the three maps \;\!$f|X$,\;\! $f|Y$
and \;\!$f|\:\!U\;;$

\vspace{1mm}$(5)$\ \ For any \,$i\in\N_k$\;\!, \,the main absorbed
set\, $U_i={\rm{Ab}}(X_{i\,},f^{\,k})$\;\! is a connected
component of any of \;\!the three sets\; $G-EP(f)$\;\!,\;
$G-\overline{EP(f)}$ \,and\; $G-EP_{\xi(G)}(f)$}\;\!.
\end{prop}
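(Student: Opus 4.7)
The plan is to apply Theorem \ref{eventual invariant} to each restriction $f^k|Y_i: Y_i \to Y_i$, which is legitimate since $f^k(Y_i) \subset Y_i$ and $P(f^k|Y_i) \subset P(f) \cap Y = \emptyset$, and then to use the absorbed-set machinery from Section 2 to construct the $U_i$'s and analyze their dynamics. For part (1), Theorem \ref{eventual invariant} applied to $(Y_i, f^k|Y_i)$ yields an integer $n_i$ with $X_i := (f^k)^{n_i - 1}(Y_i) = \bigcap_{m \geq 0}(f^k)^m(Y_i)$ connected, closed, and $f^k(X_i) = X_i$. Then $f(X_i) \subset X_{i+1\,({\rm mod}\,k)}$ follows by commuting $f$ through the nested intersection; iterating $k$ times around the cycle and using $f^k(X_i) = X_i$ forces equality $f(X_i) = X_{i+1\,({\rm mod}\,k)}$. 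Thus $X = \bigcup_{i=1}^k X_i$ is the required component-cyclic strongly $f$-invariant closed set with $X_i \subset Y_i$.

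For (2), I set $U_i = {\rm Ab}(X_i, f^k)$. Since $X_i \subset Y \subset G - P(f) = G - P(f^k)$, in particular $\partial_G X_i \cap P(f^k) = \emptyset$, so Lemma \ref{connect closed} gives $U_i$ open and Lemma \ref{absorb boundary}(1) gives $f^k$-invariance. The inclusion $Y_i \subset U_i$ follows from $(f^k)^{n_i-1}(Y_i) = X_i$ together with connectedness of $Y_i$. For $f(U_i) \subset U_{i+1\,({\rm mod}\,k)}$: the image $f(U_i)$ is connected, contains $f(X_i) = X_{i+1}$, and any $y \in U_i$ has some iterate $(f^k)^m(y) \in X_i$, hence $(f^k)^m(f(y)) \in X_{i+1}$, placing $f(y)$ in $O_-(X_{i+1}, f^k)$. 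Disjointness of the $U_i$'s comes from the fact that eventual $f^k$-orbits of points in $U_i$ lie in $X_i$, forcing $X_i \cap X_j \neq \emptyset$ for any putative common point. The identification $U = {\rm Ab}(X, f)$ is then obtained by matching connected components of $O_-(X, f)$ with those of the $O_-(X_i, f^k)$.

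For (3) and (4), I apply Theorem \ref{eventual invariant} to $f^k|X_i : X_i \to X_i$ (in the trivial case where the descending sequence has length one, since $f^k(X_i) = X_i$), obtaining a circle $C_i \subset X_i$ and a unique totally minimal set $M_i \subset C_i$ of $f^k|X_i$ with $\Omega(f^k|X_i) = AP(f^k|X_i) = M_i$ and every $f^k$-orbit in $X_i$ converging to $M_i$. Lemma \ref{connect closed}(4)--(5) applied to $f^k$ and $X_i$ then transfer this inside $U_i$: $\omega(f^k) \cap U_i = M_i$ and $\Omega(f^k) \cap U_i \subset M_i \cup \partial_G X_i$; using $\omega(f) = \omega(f^k)$ from the preliminaries and the fact that any minimal set of $f^k|Y_i$ or $f^k|U_i$ must meet $X_i$ and hence coincide with $M_i$ completes (3). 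For (4), set $M = \bigcup_i M_i$; then $f(M_i)$ is a nonempty closed $f^k$-invariant subset of $X_{i+1}$, forcing $f(M_i) = M_{i+1}$, so $f(M) = M$ and $M$ is $f$-minimal. Any $f$-minimal $N \subset U$ satisfies $N \cap X_i$ nonempty and $f^k$-invariant, hence $N \cap X_i = M_i$ for all $i$, giving $N = M$; the $\omega$, $AP$, and $\Omega$ identifications then follow from their $f^k$-counterparts.

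For (5), first $U_i \cap EP(f) = \emptyset$: any $y \in EP(f) \cap U_i$ has an iterate under $f^k$ in $X_i$ whose further orbit is finite, producing a periodic point inside $X \subset G - P(f)$, a contradiction; openness of $U_i$ upgrades this to $U_i \cap \overline{EP(f)} = \emptyset$, and a fortiori $U_i \cap EP_{\xi(G)}(f) = \emptyset$. The crux is $\partial_G U_i \subset EP_{\xi(G)}(f)$: by Lemma \ref{connect closed}(1) applied to $f^k$, one has $U_i = {\rm Ab}(L, f^k)$ for some open $f^k$-invariant $L \supset X_i$, so Lemma \ref{absorb boundary}(2) gives $f^k(\partial_G U_i) \subset \partial_G U_i$. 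Since $U_i$ contains the circle $C_i$, Corollary \ref{boundary number1} yields $|\partial_G U_i| \leq \xi(U_i)$; summing cyclically, any $z \in \partial_G U_i$ has $f$-orbit contained in $\bigcup_j \partial_G U_j$ of cardinality at most $\sum_j \xi(U_j) \leq \xi(G)$, so $z \in EP_{\xi(G)}(f)$. A connectedness argument (any connected subset of $G - EP_{\xi(G)}(f)$ properly containing the open set $U_i$ must cross $\partial_G U_i$) then identifies $U_i$ as the full connected component of each of the three sets. The main obstacle I anticipate is the careful bookkeeping required to move invariance, recurrence, and non-wandering assertions between $f$ and $f^k$, and particularly the sharpening of the boundary estimate from the generic bound $\xi(G)+2$ of Lemma \ref{boundary number} down to $\xi(G)$ by exploiting the circle $C_i$ via Corollary \ref{boundary number1} and distributing the boundary budget across the $k$ cyclic components.
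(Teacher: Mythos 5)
Your proposal is correct and follows essentially the same route as the paper: Theorem \ref{eventual invariant} produces the strongly invariant cores $X_i$ (you apply it to each $Y_i$ separately where the paper applies it only to $Y_1$ and sets $X_i=f^{\,i-1}(X_1)$, an immaterial difference), Lemmas \ref{absorb boundary} and \ref{connect closed} handle the absorbed sets $U_i$ and the transfer of $\omega$ and $\Omega$ into them, and Corollary \ref{boundary number1} together with the cyclic invariance of $\bigcup_j\partial_GU_j$ gives the $EP_{\xi(G)}(f)$ bound in (5). The few compressed steps (e.g.\ $f(\partial_GU_i)\subset\partial_GU_{i+1}$ and $\Omega(f)\cap U_i=\Omega(f^{\,k})\cap U_i$) are exactly the ones the paper also fills in, and are recoverable from what you have established.
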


\begin{proof} Since \,$Y$ is $f$-invariant,\,
from \,$Y\!\subset G-P(f)$ \,we get \,$Y\!\subset G-EP(f)$\;\!.

\vspace{1mm}(1)\ \ Note that\;\! $Y_1$\;\! itself is a graph,
\,and $f^{\,k}|Y_1:Y_1\to Y_1$\;\! is a graph map without periodic
point. By $(1)$ of Theorem \ref{eventual invariant}, \,there exist an \,$n\in\N$ \,and a
connected closed set $X_1$ $\subset Y_1$\;\! such that
$f^{\,k(n-1)}(Y_1)=X_1=f^{\,k}(X_1)$\;\!.\, Let\,
$X_i=f^{\,i-1}(X_1)$ \,for \,$i=2\,,\,\cdots,\,k$\;\!.\, Put\;
$X=\,\bigcup_{\;\!i=1}^{\,k}\;\!X_i$\;\!. Then $X$ satisfies the
conditions mentioned in (1) of this theorem.

\vspace{2mm}(2)\ \ \,For any \,$i\in\N_k$\;\!, \,it follows from
(2) of\;\! Lemma \ref{connect closed} that \,$U_i={\rm{Ab}}(X_{i\,},f^{\,k})$ \,is
open in $G$. \,Since\,
$f^{\,k\:\!n}(Y_i)=f^{\,i-1}f^{\,k(n-1)}f^{\,k+1-i\,}(Y_i)\subset
f^{\,i-1}f^{\,k(n-1)}(Y_1)=f^{\,i-1}(X_1)=X_i$\;\!, \,we have
\,$Y_i\subset U_i$\,, \;and hence \,$Y\!\subset U$. \,Write
$X_{k+1}=X_1$\, and \,$U_{k+1}=U_1$\;\!. \,Since $f(U_i)$ \,is a
connected set containing \,$X_{i+1}$\, and $f(U_i)\subset
f\big(O_-(X_{i\,},f^{\,k})\big)\subset
O_-(X_{i+1\,},f^{\,k})$\;\!, \,we have $f(U_i)\subset
U_{i+1}$\;\!.\, Since\, $X_1\,,\,\cdots,\,X_{k}$ \,are pairwise
disjoint, \,the main absorbed sets\, $U_1\,,\,\cdots,\,U_{k}$
\,are also pairwise disjoint. Thus \,$U$ is a component-cyclic
$f$-invariant open set with \,$k$\;\! connected components\,
$U_1\,,\,\cdots,\,U_k$\;\!. \,From (2.3) and (2.2) it is easy to
check that\, $U\!=\,\bigcup_{\;\!i=1}^{\,k}U_i$ \,is just the main
absorbed set \;${\rm{Ab}}(X,f)$\;\!.

\vspace{2mm}(3) \ For any \,$i\in\N_k$\;\!, \,from Theorem \ref{eventual invariant} we
see that $X_i$ \;\!contains at least one circle \,$C_i$\;\!,\, and
$f^{\,k}|X_i$\, has a unique minimal set $M_i$\;\!,\, which
satisfy\;
$\Omega(f^{\,k}|X_i)=\omega(f^{\,k}|X_i)=AP(f^{\,k}|X_i)=M_i\subset
C_i$\,. \,It is well known that \,$\omega(f)=\,\omega(f^{\,k})$.
\;By (2) of this theorem we have\;
$\Omega(f)\cap_{\;\!}U_i=\,\Omega(f^{\,k})\cap_{\;\!}U_i$\;\!.
\,Hence, \,from (4) and (5) of Lemma \ref{connect closed} we get $\,\omega(f)\cap
U_i=\,\omega(f^{\,k})\cap U_i=\,\omega(f^{\,k}|X_i)=M_i$ \;and\;
$\Omega(f)\cap\;\!U_i=\Omega(f^{\,k})\cap\;\!U_i=\,\Omega(f^{\,k})\cap
X_i\subset\,\Omega(f^{\,k}|X_i)\cup\;\!\partial_GX_i=M_i\cup\;\!\partial_GX_i$\;\!,
\,and from $\;\omega(f^{\,k})\cap U_i=M_i$ \;\!we see that
$M_i$\;\! is also a unique minimal set of\:\! $f^{\,k}|Y_i$ \;and
of\:\! $f^{\,k}|U_i$\,.

\vspace{2mm}(4) \,follows from (2) and (3) of this
theorem and the fact that $AP(\varphi)=AP(\varphi^{\,n})$ for any
\,$n\in\N$ \,and any continuous map\, $\varphi$ \,from a
topological space to itself (\cite{ES}).

\vspace{2mm}(5) \ For any \,$i\in\N_k$\;\!, \,from \,$X_i\subset
G-EP(f)$\, we get\, $U_{i}={\rm Ab}(X_{i\;\!},f^{\,k})\subset
G-EP(f)$\;\!. \,By (1) of Lemma \ref{connect closed} and (2) of Lemma \ref{absorb boundary},\, we
have $\,\partial_GU_i\subset EP(f)$\;\!. Thus $U_i$ is just a
connected component of\, $G-EP(f)$\;\!. \,Since $\,U_i$ is open,
\,it is also a connected component of\; $G-\overline{EP(f)}$\;\!.
\,By Corollary \ref{boundary number1},\, we have $\,|\partial_GU_i|\le\xi(U_i)$\;\!.
\,So
$\;\!|\bigcup_{j\;\!=1}^{\;k\,}\partial_GU_{\!j\,}|\le\sum_{j\;\!=1}^{\;k\,}\xi(U_{\!j})$
$\le\xi(G)$\;\!. \,From $f(U_i)\subset U_{i+1\;}$ and
$f(\partial_GU_i)\subset f\big(EP(f)\big)\subset EP(f)\subset
G-U_{i+1}$ \,we get $f(\partial_GU_i)\subset
\partial_GU_{i+1}$\;\!. Thus\,
$\bigcup_{j\;\!=1}^{\;k\,}\partial_GU_{\!j}$\, is $f$-invariant,
\,and \,$\partial_GU_i\subset
\bigcup_{j\;\!=1}^{\;k\,}\partial_GU_{\!j}\subset
EP_{\,\xi(G)}(f)$\;\!. \,Hence \,$U_i$ is also a connected
component of\;\! $G-EP_{\,\xi(G)}(f)$\;\!.
\medskip

All together, we complete the proof.
\end{proof}

\begin{cor}\label{omega equal} {\it Let $f:G\to G$ be a
graph map\;\!, \,and \,$Y$\! and \,$Y'$\! be component-cyclic
$f$-invariant closed sets contained in\, $G-P(f)$\;\!. \,If
\;$Y\cap\,Y'\ne\,\emptyset\;\!$, \,then
\;$\omega(f|Y)=\omega(f|Y')$}\;\!.
\end{cor}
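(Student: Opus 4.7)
The plan is to apply Proposition \ref{sub main} not only to $Y$ and $Y'$, but also to their union $Y\cup Y'$, and then exploit the uniqueness of the minimal set to identify the three resulting minimal sets.

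First, I would verify that $Y\cup Y'$ satisfies the hypotheses of Proposition \ref{sub main}: it is closed as the union of two closed sets, it lies in $G-P(f)$ since both $Y$ and $Y'$ do, and, by Lemma \ref{union component} applied to the assumption $Y\cap Y'\ne\emptyset$, it is a component-cyclic $f$-invariant set. Applying Proposition \ref{sub main}(4) to each of $Y$, $Y'$, and $Y\cup Y'$ yields unique minimal sets $M$, $M'$, and $M''$ of $f|Y$, $f|Y'$, and $f|(Y\cup Y')$ respectively.

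Next, I would observe that minimality of a non-empty closed $f$-invariant set is an intrinsic property that does not depend on the ambient space in which one views it. Thus $M\subset Y\subset Y\cup Y'$ is automatically a minimal set of $f|(Y\cup Y')$ as well, and the uniqueness clause in Proposition \ref{sub main}(4) forces $M=M''$. By the same argument applied to $Y'$, $M'=M''$, and hence $M=M'$.

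Finally, I would identify $\omega(f|Y)$ with $M$ (and likewise $\omega(f|Y')$ with $M'$). For any $y\in Y$, the forward orbit of $y$ stays in $Y$, so $\omega(y,f)\subset Y$; combining $Y\subset U$ with Proposition \ref{sub main}(3), which gives $\omega(f)\cap U=M\subset Y$, we get $\omega(y,f)\subset Y\cap\omega(f)=M$. The reverse inclusion $M\subset \omega(f|Y)$ is immediate since $M$ is a minimal set of $f|Y$ and every point of a minimal set lies in its own $\omega$-limit set. Hence $\omega(f|Y)=M=M'=\omega(f|Y')$, completing the argument. The only mildly subtle point is the intrinsic character of minimality used to identify $M$ with $M''$; everything else is a direct invocation of the structural results already established.
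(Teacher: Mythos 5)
Your proposal is correct and follows essentially the same route as the paper: invoke Lemma \ref{union component} to make $Y\cup Y'$ a component-cyclic $f$-invariant closed set, then use the uniqueness of the minimal set from Proposition \ref{sub main}(4) applied to $Y$, $Y'$, and $Y\cup Y'$ to force the three minimal sets to coincide. The extra details you supply (the intrinsic character of minimality and the identification $\omega(f|Y)=M$) are points the paper treats as immediate consequences of Proposition \ref{sub main}(4), so there is no substantive difference.
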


\begin{proof} By (4) of Proposition \ref{sub main}, $f|Y$ and
$f|Y'$ have unique minimal sets\, $M$ and $M\:\!'$,
respectively,\, which satisfy \,$\omega(f|Y)=M$ \,and\,
$\omega(f|Y')=M\:\!'$. \,Obviously,\, both $M$ and $M\:\!'$ are
minimal sets of $f|(\:\!Y\cup\,Y')$\;\!.\, Since
$Y\cap\,Y'\!\ne\,\emptyset$\;\!, \,by Lemma \ref{union component}\;\!, \,$Y\cup\,Y'$
is also a component-cyclic $f$-invariant closed set. \,Hence, by
(4) of Proposition \ref{sub main}\;\!, \;$f|(\:\!Y\cup\,Y')$ \;\!has only one
minimal set, \,which must be $M=M\:\!'$\;\!. Thus
\;$\omega(f|Y)=\omega(f|Y')$\;\!.
\end{proof}

\vspace{3mm}The following theorem is a main result of this paper,
which describes the dynamical behavior of $f$ on the intersection of each connected
component of $G-\overline{EP(f)}$ with
$R(f)$\;\!.

\begin{thm}\label{main theorem} {\it Let  $G$  be a connected graph,  and  $f:G\rightarrow G$ be a continuous map such that $P(f)\not=\emptyset$ and $R(f)-\overline{P(f)}\not=\emptyset$.  Then there exist pairwise disjoint nonempty open subsets  $U_1, \cdots,  U_n$  of  $G$  with  $n\in \N$  such that

\vspace{0.7mm}$(1)$\ \ $f(U_i)\subset U_i,$ for each $i\in\N_n$.

\vspace{1mm}$(2)$\ \ Write $U=\bigcup_{i=1}^{\;n}\,U_i$ and $U_0=G-U$. Then $\overline{P(f)}\subset U_0$, $\overline{U}-U\subset EP(f)$,
and $\Omega(f)-U_0=R(f)-U_0=R(f)-\overline{P(f)}\subset U$.

\vspace{1.2mm}$(3)$\ \ For each \,$i\in\N_n$\;\!, $U_i$ has $k_i$ connected components $U_{i1}, \cdots, U_{ik_i}$
with $k_i\in \N$, which satisfy $f(U_{ik_i})\subset U_{i1}$ and $f(U_{ij})\subset U_{i\,,\, j+1}$ for $1\leq j<k_i$.

\vspace{1.2mm}$(4)$\ \ For each \,$i\in\N_n$\;\!, write $W_i=R(f)\cap U_i$. Then $W_i$ is a unique minimal set of $f$
contained in $U_i$.

\vspace{1.2mm}$(5)$\ \ For each \,$i\in\N_n$\;\! and $j\in N_{k_i}$, write $W_{ij}=W_i\cap U_{ij}$. Then $W_{ij}$
is a unique minimal set of $f^{k_i}$ contained in $U_{ij}$, and there is a connected closed subset $G_{ij}$ of
$G$ and a circle $C_{ij}$ such that $W_{ij}\subset C_{ij}\subset G_{ij}\subset U_{ij}$, $f(W_{ik_i})=W_{i1}$,
$f(G_{ik_i})=G_{i1}$, and $f(W_{ij})=W_{i\,,\,j+1}$ and $f(G_{ij})=G_{i\,,\,j+1}$ for $1\leq j<k_i$.

\vspace{1.2mm}$(6)$\ \ For each $i\in\N_n$, $j\in \N_{k_i}$, and for each $x\in U_{ij}$, one has
$\lim_{m\rightarrow\infty}d(f^m(x), W_i)=0$, and $\lim_{m\rightarrow\infty}d(f^{mk_i}(x), W_{ij})=0$.}
\end{thm}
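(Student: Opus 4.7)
For each $x\in R(f)-\overline{P(f)}$, Lemma \ref{component exist} supplies a component-cyclic $f$-invariant closed set $Y$ with $x\in Y\subset G-EP(f)$. Proposition \ref{sub main} then yields a component-cyclic strongly $f$-invariant closed set $X\subset Y$ together with its main absorbed set ${\rm Ab}(X,f)$, which is open, $f$-invariant, component-cyclic, contains $Y$, and whose connected components are, by Proposition \ref{sub main}(5), exactly connected components of $G-EP_{\xi(G)}(f)$.

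\textbf{Finiteness and assembly.} The set $EP_{\xi(G)}(f)$ is closed, so $G-EP_{\xi(G)}(f)$ is open in the graph $G$ and therefore has only finitely many connected components. Hence as $x$ varies through $R(f)-\overline{P(f)}$, only finitely many distinct components ever appear, and any two of them are either equal or disjoint. Group them into $f$-cycles: if $U_{i1},\ldots,U_{ik_i}$ is one cyclic $f$-orbit (so $f(U_{ij})\subset U_{i\,,\,j+1\,({\rm mod}\,k_i)}$), set $U_i=\bigcup_{j}U_{ij}$. The resulting finite family $U_1,\ldots,U_n$ consists of pairwise disjoint open $f$-invariant sets whose components are cyclically permuted by $f$; this gives (1) and (3), while every $x\in R(f)-\overline{P(f)}$ lies in some $U_i$ by construction.

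\textbf{Verifying (2), (4)--(6).} For (2): Proposition \ref{sub main}(5) gives $U\cap EP(f)=\emptyset$, hence $\overline{P(f)}\subset U_0$ and $R(f)\cap U\subset R(f)-\overline{P(f)}$; combined with the preceding paragraph we get $R(f)\cap U=R(f)-\overline{P(f)}$. The inclusion $\overline{U}-U\subset EP(f)$ follows from Lemma \ref{absorb boundary}(2) applied to each component $U_{ij}$. For (4)--(6), fix $i$ and apply Proposition \ref{sub main} to a component-cyclic $Y\subset G-EP(f)$ whose components lie in the $U_{ij}$'s (Corollary \ref{omega equal} guarantees that different admissible choices share the same minimal set, so the construction is canonical). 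This produces a strongly $f$-invariant closed $X_i\subset U_i$ with components $X_{ij}\subset U_{ij}$, circles $C_{ij}\subset X_{ij}$, and unique minimal sets $M_{ij}$ of $f^{k_i}|X_{ij}$; setting $G_{ij}=X_{ij}$, $W_{ij}=M_{ij}$, and $W_i=\bigcup_j W_{ij}$ gives (5) from the strong invariance of $X_i$. For (4), $W_i\subset AP(f)\subset R(f)$, while by Proposition \ref{sub main}(4), $R(f)\cap U_i\subset\omega(f)\cap U_i=W_i$, and uniqueness of $W_i$ as a minimal set of $f|U_i$ is part of Proposition \ref{sub main}(4). For (6), any $x\in U_{ij}={\rm Ab}(X_{ij},f^{k_i})$ is eventually trapped in $X_{ij}$ by Lemma \ref{connect closed}(3), and Theorem \ref{eventual invariant}(4) applied to $f^{k_i}|X_{ij}$ then yields $\lim_{m}d(f^{mk_i}(x),W_{ij})=0$, whence $\lim_m d(f^m(x),W_i)=0$ by cyclicity.

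\textbf{Main obstacle.} The delicate step in (2) is the equality $\Omega(f)-U_0=R(f)-U_0$. Proposition \ref{sub main}(3) only provides $\Omega(f)\cap U_i\subset M_i\cup\partial_G X_i$, so one must eliminate the possibility of non-wandering points in $\partial_G X_i\setminus M_i$. Since $\partial_G X_i\subset X_i\subset U_i$ is a finite set disjoint from $EP(f)$, my plan is as follows: given $z\in\partial_G X_i$ with $z\notin M_i$, Lemma \ref{connect closed}(3) allows us to push a small neighborhood $Z$ of $z$ into ${\rm Int}_G(X_i)$ after finitely many iterates; any recurrence of $z$ would then force a recurrence of some interior point of $X_i$, which, combined with the fact that $M_i$ is the unique minimal set and $\omega(f^{k_i}|X_i)=M_i$ (Theorem \ref{eventual invariant}(3)), forces $z\in M_i$, a contradiction. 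This localisation argument is where most of the technical work sits; once it is in place, (2) follows and the remainder of the theorem is bookkeeping.
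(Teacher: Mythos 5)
Your route is essentially the paper's: Lemma \ref{component exist} produces component-cyclic invariant closed sets through each point of $R(f)-\overline{P(f)}$, Proposition \ref{sub main} upgrades each to a strongly invariant $X$ with main absorbed sets $U_{ij}={\rm Ab}(X_{ij},f^{k_i})$, and Corollary \ref{omega equal} plus Theorem \ref{eventual invariant} supply (4)--(6). But your finiteness step rests on a false claim: you assert that $G-EP_{\xi(G)}(f)$, being open in a graph, ``therefore has only finitely many connected components.'' Open subsets of a graph can have infinitely many components (take $[0,1]$ minus a convergent sequence together with its limit), and nothing prevents $EP_{\xi(G)}(f)$ from containing such a configuration --- for instance $f^{-1}({\rm Fix}(f))$ already sits inside $EP_2(f)$ and can accumulate. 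The correct finiteness argument, and the one the paper uses, is that each of the sets in play contains a circle (Proposition \ref{sub main}(3): $M_i\subset C_i\subset X_i\subset U_i$), distinct components are pairwise disjoint, and $G$ admits at most $m$ pairwise disjoint circles; the paper implements this by choosing a maximal family of pairwise disjoint component-cyclic invariant closed sets $Y^{(1)},\dots,Y^{(n)}$ with $\sum_i k_i\le m$ and then using Corollary \ref{omega equal} to show that no point of $R(f)-\overline{P(f)}$ escapes it. Since you already cite Proposition \ref{sub main}(3) elsewhere, the repair is available to you, but as written the step fails.

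Second, the step you yourself flag as the main obstacle, $\Omega(f)-U_0=R(f)-U_0$, is not closed by your sketch, and the sketch substitutes the wrong notion: you propose to show that ``any recurrence of $z$ would force a recurrence of some interior point of $X_i$,'' but recurrence is not the issue --- a recurrent point of $U_i$ already lies in $M_i$ by Proposition \ref{sub main}(3) and (4). What must be excluded are non-wandering, non-recurrent points of $\partial_G X_i\setminus M_i$, and for those one has to play the non-wandering condition (returns of a whole neighbourhood $Z$ of $z$, at arbitrarily large times since $U\cap P(f)=\emptyset$) against the facts that $f^{k_i m}(Z)\subset X_{ij}$ for all large $m$ (Lemma \ref{connect closed}(3)) and that orbits in $X_{ij}$ converge to $M_{ij}$ (Theorem \ref{eventual invariant}(4)). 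The paper is admittedly terse here as well --- it simply asserts the equality follows from Theorem \ref{eventual invariant}(4) --- but your version reduces the problem to a statement (recurrent points land in $M_i$) that is already known and does not address the actual difficulty.
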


\begin{proof} Since
\,$R(f)-\overline{P(f)}\ne\,\emptyset$\;\!,\,\vspace{0.3mm} by
Lemma \ref{component exist}, \,there exists at least one component-cyclic
$f$-invariant closed set \,$Y^{(1)}$\;\! in \,$G-{EP(f)}$\;\!.
\,By Proposition \ref{sub main}, \;$Y^{(1)}\subset G-\overline{EP(f)}$\;\!,
\;and\, if \;$Y^{(1)}$\;\! has \,$k_1$\, connected components then
\,$Y^{(1)}$\;\! contains at least \,$k_1$\, pairwise disjoint
circles.\,\vspace{0.3mm} Thus we can assume that there exist
\,$n$\, pairwise disjoint \;\!component-cyclic $f$-invariant
closed sets \,$Y^{(1)},\,\cdots,\,Y^{(n)}$ \;\!in
\,$G-\overline{EP(f)}$ \;with \,$n\in\N_m$\; but
\,$G-\overline{EP(f)}$ \,cannot admit \,$n+1$ \,pairwise disjoint
\;\!component-cyclic $f$-invariant closed sets, where $m$ is the maximal number of pairwise disjoint
circles in $G$. \,Suppose that
\,$Y^{(i)}$\, has \,$k_i$\, connected components.\, Then we have
\,$n\le\sum_{\;\!i=1}^{\;n}k_i\le m$\;\!.

\vspace{2mm}If
\,$R(f)-\overline{P(f)}\,\not\subset\,\bigcup_{\;\!i=1}^{\:n}Y^{(i)}$\;\!,
\,then there is a point \,$x\in
R(f)-\overline{P(f)}-\bigcup_{\;\!i=1}^{\;n}Y^{(i)}$\;\!. \,By
Lemma \ref{component exist} and Proposition \ref{sub main},\vspace{0.3mm} \,there exists a
component-cyclic $f$-invariant closed set \,$Y$ such that\, $x\in
Y\subset G-\overline{EP(f)}$\;\!. \,For each $i\in\N_n$\;\!,
\,since \;$\omega(f|Y)-\omega(f|Y^{(i)})\supset R(f|Y)-Y^{(i)}$
$\supset\{x\}\ne\,\emptyset\;\!$, \,by Corollary \ref{omega equal} \,we have
$\,Y\cap Y^{(i)}=\,\emptyset\,$.\vspace{0.3mm} So
\,$Y\:\!,\,Y^{(1)},\,\cdots,\,Y^{(n)}$ are \,$n+1$\, pairwise
disjoint component-cyclic strongly $f$-invariant closed sets in
\,$G-\overline{EP(f)}$\;\!.\vspace{0.3mm} But this will leads to a
contradiction. Hence we must have
\,$R(f)-\overline{P(f)}\,\subset\,\bigcup_{\;\!i=1}^{\:n}Y^{(i)}$\;\!.

\vspace{2mm} For each $i\in \N_n$, let $X^{(i)}$ be the component-cyclic \,strongly\,
$f$-invariant closed set contained in $Y_i$ and let
$G_{i1},\cdots, G_{ik_i}$ be the connected components of $X^{(i)}$ (see $(1)$ of Proposition \ref{sub main}).
Set $U_{ij}={\rm Ab}(G_{ij},f^{\,k_i})$, $U_i=\bigcup_{j=1}^{k_i} U_{ij}$, and $U=\bigcup_{i=1}^nU_i$.
Then, by $(2)$ of Proposition \ref{sub main} and by Lemma \ref{absorb boundary}, each $U_i$ is an $f$-invariant open set,
$f(U_{ik_i})\subset U_{i1}$ and $f(U_{ij})\subset U_{i\,,\, j+1}$ for $1\leq j<k_i$, and
${\overline U_i}-U_i\subset\bigcup_{j=1}^{k_i}({\overline U_{ij}}-U_{ij})\subset EP(f^{k_i})=EP(f)$.
So, ${\overline U}-U\subset \bigcup_{i=1}^n({\overline U_i}-U_i)\subset EP(f)$. From the definition of
$U_{ij}$, we see that $U_{ij}\cap P(f)=\emptyset$, which means that $P(f)\cap U=\emptyset$. Thus
$\overline {P(f)}\subset G-U$. Let $U_0=G-U$. Then $R(f)-U_0\subset R(f)-\overline{P(f)}$. Since
\,$R(f)-\overline{P(f)}\,\subset\,\bigcup_{\;\!i=1}^{\:n}Y^{(i)}\subset U=G-U_0$\;\!, we have
$R(f)-U_0\supset R(f)-\overline{P(f)}$. So, $R(f)-U_0=R(f)-\overline{P(f)}$. Thus $(1)$, $(2)$, and $(3)$
are proved except for the relation $\Omega(f)-U_0=R(f)-U_0$.
\medskip

As in the statement of the theorem, for each \,$i\in\N_n$\;\! and $j\in N_{k_i}$, let $W_i=R(f)\cap U_i$ and $W_{ij}=W_i\cap U_{ij}$.
From $(3)$ and $(4)$ of Proposition \ref{sub main}, we have $W_i$ and $W_{ij}$ are the unique minimal sets of $f|U_i$ and $f^{k_i}|U_{ij}$ respectively,
and there exist circles $C_{ij}$ with $W_{ij}\subset C_{ij}\subset G_{ij}\subset U_{ij}$. This together with $(2)$ and the strong
invariance of $X^{(i)}$ implies that
$f(W_{ik_i})=W_{i1}$, $f(G_{ik_i})=G_{i1}$, and $f(W_{ij})=W_{i\,,\,j+1}$ and $f(G_{ij})=G_{i\,,\,j+1}$ for $1\leq j<k_i$.
Thus $(4)$ and $(5)$ are proved. The conclusions of $(6)$ follow from $(4)$ of Theorem \ref{eventual invariant}, which clearly implies the
equation $\Omega(f)-U_0=R(f)-U_0=\bigcup_{i=1}^n W_i$. Thus the proof of $(2)$ is complete.
\end{proof}

%%%%%%%%%%%%%%%%%%%%%%%%%%%%%%%%%%%%%%%%%%%%%%%%%%%%%%%%%%%%%%%%%%
\section{Applications of the main theorem}

As applications of Theorem \ref{main theorem}, we will prove several propositions part of which improve or reprove
some known results.

\medskip
The following theorem  is also implied by \cite[Theorem 4]{Bl86}.

\begin{thm}\label{r structure} {\it Let $f:G\to G$ be a
graph map\;\!, \,and let \,$m$ be the greatest number of\;\!
pairwise disjoint circles in \;\!$G$. \vspace{0.5mm} \,Then there
exist minimal sets $M_1\:\!,\,\cdots,\,M_n$ of $f$ in
\,$G-\overline{EP(f)}$ \,with \;$0\le n\le m$ \,such that
\;$\overline{R(f)}=\,\overline{P(f)}\cup\big(\bigcup_{\,i=1}^{\;n}M_i\big)$}\;\!.
\end{thm}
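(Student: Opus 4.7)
The strategy is to reduce to \emph{Theorem~\ref{main theorem}} and combine it with the known identity $\overline{R(f)}=R(f)\cup\overline{P(f)}$ cited in the introduction (from \cite{Haw, MS2}). Once we have that identity, it suffices to describe the set $R(f)-\overline{P(f)}$, which is exactly what the main theorem does. I would split into three cases depending on whether $P(f)$ and $R(f)-\overline{P(f)}$ are empty, and let the main case do almost all of the work.

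In the principal case $P(f)\ne\emptyset$ and $R(f)-\overline{P(f)}\ne\emptyset$, apply Theorem~\ref{main theorem} and set $M_i:=W_i=R(f)\cap U_i$ for $i\in\N_n$. By part~(4) of the main theorem, each $W_i$ is a minimal set of $f$. By parts (2) and (4), $R(f)-\overline{P(f)}=R(f)\cap U=\bigcup_{i=1}^n W_i$, and therefore
\[
\overline{R(f)}=R(f)\cup\overline{P(f)}=\overline{P(f)}\cup\bigcup_{i=1}^{n}M_i.
\]
To see that each $M_i$ lies in $G-\overline{EP(f)}$ (not merely $G-EP(f)$), I will argue: $U_i$ is open and, being $f$-invariant and contained in the set where the main theorem places $W_i$, has $U_i\cap P(f)=\emptyset$; since $U_i$ is $f$-invariant, this promotes to $U_i\cap EP(f)=\emptyset$; and an open set disjoint from $EP(f)$ is disjoint from $\overline{EP(f)}$. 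Thus $M_i\subset U_i\subset G-\overline{EP(f)}$.

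The two degenerate cases are easy. If $R(f)-\overline{P(f)}=\emptyset$, then $\overline{R(f)}=\overline{P(f)}$ and we take $n=0$. If $P(f)=\emptyset$, then $f$ has no periodic point, $EP(f)=\emptyset$, and Theorem~\ref{eventual invariant} supplies a single totally minimal set $M\subset Q\subset X$ with $\omega(f|X)=M$; combining with $\overline{R(f)}=R(f)\cup\overline{P(f)}=R(f)$ and the absorption statement (5) in Theorem~\ref{eventual invariant} (and on each of the finitely many pre-images $f^{-j}(X)$) gives $\overline{R(f)}=M$, so $n=1$ works. Finally, for the bound $n\le m$: by part~(5) of the main theorem each component $U_{ij}$ of $U_i$ contains a circle $C_{ij}$, and the sets $U_{ij}$ (hence the $C_{ij}$) are pairwise disjoint; thus $n\le\sum_{i=1}^n k_i\le m$.

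The only genuinely delicate point is the verification $M_i\subset G-\overline{EP(f)}$ (as opposed to the weaker $\subset G-EP(f)$), which is the whole reason Theorem~\ref{main theorem} was stated with the closed set $\overline{EP(f)}$ rather than $EP(f)$; it rests on the openness and $f$-invariance of $U_i$ together with $U_i\cap P(f)=\emptyset$. Everything else is bookkeeping on top of Theorems~\ref{eventual invariant} and~\ref{main theorem}.
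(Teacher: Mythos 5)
Your proof is correct and takes essentially the same route as the paper: the identity $\overline{R(f)}=R(f)\cup\overline{P(f)}$ from \cite{MS2} combined with Theorem \ref{main theorem}, taking $M_i=W_i=R(f)\cap U_i$ and bounding $n$ by the disjoint circles $C_{ij}\subset U_{ij}$. Your explicit handling of the case $P(f)=\emptyset$, $R(f)\ne\emptyset$ via Theorem \ref{eventual invariant} and your direct check that the open $f$-invariant set $U_i$ misses $\overline{EP(f)}$ are both sound (and fill in details the paper leaves implicit); the only slip is that the absorption statement you invoke is item $(4)$, not $(5)$, of Theorem \ref{eventual invariant}.
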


\begin{proof} \,By \cite[Theorem 2.1]{MS2},\, we
get
\,$\overline{R(f)}=R(f)\cup\overline{P(f)}=\overline{P(f)}\cup\big(R(f)-\overline{P(f)}_{\,}\big)$\;\!.
If\, $R(f)-\overline{P(f)}=\,\emptyset$\, then we can put
\,$n=0$\;\!.\, Otherwise,
$R(f)-\overline{P(f)}\ne\,\emptyset$\;\!. \,Let \,$n$\,, $U_i$ and the minimal sets \,$W_i$ \,be the same as in
Theorem \ref{main theorem}. Then, by Theorem \ref{main theorem}, we have $1\le n\le m$\;\! and
$$
\mbox{$R(f)-\overline{P(f)}=\;\!\bigcup_{\,i=1}^{\;n}\big(R(f)\cap
U_i\big)=\;\!\bigcup_{\,i=1}^{\;n}W_i\subset
G-\overline{EP(f)}\,.$} \vspace{-2mm}
$$
Hence, \,writing $M_{i\,}$ for $W_i$, \,we obtain
\;$\overline{R(f)}=\,\overline{P(f)}\cup\big(\bigcup_{\,i=1}^{\;n}M_{i\,}\big)$\;\!.
 \end{proof}

%\begin{rem} Recall that the closure
%$\overline{R(f)}$ of the set of all recurrent points of a
%continuous map $f: X\to X$ is called the {\it{center}}
%\;\!of $f$\,(cf. \cite[p.\;77 ]{BC}), which is written\,
%$C(f)$\;\!. \,In \cite[Theorem 4]{Bl86}, \,Blokh showed that, for
%any graph map $f: G\to G$,  \vspace{-2mm}
%$$
%\overline{P(f)}={\mbox{$\bigcup_{\,i=1}^{\;b}$}}B(M_i)\,\cup\,(\,\mbox{$\bigcup_{\,\alpha\in
%A}$}\,\Omega\;\!'_{\alpha})\,\cup\,X_f\,,  \vspace{-2mm}
%$$
%$$
%C(f)={\mbox{$\bigcup_{\,i=1}^{\;b}$}}B(M_i)\,\cup\,(\,\mbox{$\bigcup_{\,\alpha\in
%A}$}\,\Omega\;\!'_{\alpha})\,\cup\,(\,\mbox{$\bigcup_{\,i=1}^{\;s}$}S(\mbox{\it\~{M}}_i))\,\cup\,X_f\,.
%\vspace{-0mm}
%$$
%Thus the above Theorem \ref{r structure} \;\!can be also derived from
%\cite[Theorem 4]{Bl86}\;\!.
%\end{rem}
The following proposition indicates that, for a graph map $f:G\to G$ and
\,$x\in G$, \,if every neighborhood of \,$x$ contains both a
recurrent point and an eventually periodic point of $f$, then every neighborhood of \,$x$ must contain a periodic point.

\begin{prop} \,{\it Let $f$ be a graph map. Then
$\overline{R(f)}\;\!\cap\,\overline{EP(f)}\,=\,\overline{P(f)}$.}
\end{prop}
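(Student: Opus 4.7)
The plan is to prove the two inclusions separately, with the non-trivial one being $\overline{R(f)}\cap\overline{EP(f)}\subset\overline{P(f)}$. The inclusion $\overline{P(f)}\subset\overline{R(f)}\cap\overline{EP(f)}$ is immediate from $P(f)\subset R(f)\cap EP(f)$, so I would dispatch it in one line.

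For the reverse inclusion, I would argue by contradiction: suppose $x\in\overline{R(f)}\cap\overline{EP(f)}$ but $x\notin\overline{P(f)}$. Invoking the already-cited identity $\overline{R(f)}=R(f)\cup\overline{P(f)}$ from \cite{MS2}, such an $x$ must lie in $R(f)-\overline{P(f)}$. In particular $R(f)-\overline{P(f)}\neq\emptyset$ (also $P(f)\neq\emptyset$ on a graph, or else handle that degenerate case trivially), so Theorem \ref{main theorem} applies and produces the open $f$-invariant set $U=\bigcup_{i=1}^n U_i$ with $R(f)-\overline{P(f)}\subset U$ and $\overline{P(f)}\subset U_0=G-U$. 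Thus $x\in U$.

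The key step is to upgrade $P(f)\cap U=\emptyset$ to $\overline{EP(f)}\cap U=\emptyset$. First, since each $U_i$ satisfies $f(U_i)\subset U_i$, if $y\in U\cap EP(f)$ then some iterate $f^m(y)\in P(f)\cap U=\emptyset$, a contradiction; so $U\cap EP(f)=\emptyset$. Second, because $U$ is open, this immediately gives $U\cap\overline{EP(f)}=\emptyset$ (any point of $U$ has $U$ itself as a neighborhood meeting $EP(f)$ trivially). This contradicts $x\in U\cap\overline{EP(f)}$ and completes the argument.

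The main obstacle, conceptually, is the passage from the pointwise statement $U\cap P(f)=\emptyset$ supplied by Theorem \ref{main theorem} to the closure statement $U\cap\overline{EP(f)}=\emptyset$; but in fact this is painless once one observes the $f$-invariance of $U$ plus its openness. So the proof should be very short, essentially a direct corollary of Theorem \ref{main theorem} together with $\overline{R(f)}=R(f)\cup\overline{P(f)}$.
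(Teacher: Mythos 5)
Your proof is correct and follows essentially the same route as the paper: both reduce the nontrivial inclusion to the identity $\overline{R(f)}=R(f)\cup\overline{P(f)}$ from \cite{MS2} together with the fact that $R(f)-\overline{P(f)}$ is disjoint from $\overline{EP(f)}$, the latter being a consequence of Theorem \ref{main theorem}. The only (harmless) difference is that the paper quotes the containment $R(f)-\overline{P(f)}\subset G-\overline{EP(f)}$ directly from the construction behind Theorem \ref{main theorem}, whereas you re-derive the needed disjointness from the stated properties of $U$ (invariance of each $U_i$, $\overline{P(f)}\subset U_0$, and openness of $U$), which is a clean and self-contained way to get the same fact; note only that your parenthetical claim that $P(f)\ne\emptyset$ for every graph map is false (e.g.\ an irrational rotation), but your fallback is fine since $P(f)=\emptyset$ forces $EP(f)=\emptyset$ and both sides of the identity are then empty.
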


\begin{proof} \,By \cite[Theorem
2.1]{MS2}\,\vspace{0.5mm} we get
\,$\overline{R(f)}=R(f)\cup\overline{P(f)}=\overline{P(f)}\cup\big(R(f)-\overline{P(f)}_{\,}\big)$\;\!,
\,and by Theorem \ref{main theorem} \,we get
\,$R(f)-\overline{P(f)}\,\subset G-\overline{EP(f)}$\;\!. \,Hence
\vspace{-1mm}\,
$$
\big(\;\!\overline{R(f)}\;\!\cap\;\overline{EP(f)}\;\big)\,-\,\overline{P(f)}
=\big(\;\!\overline{R(f)}\,-\,\overline{P(f)}\;\big)\;\!\cap\;\overline{EP(f)}
=\big({R(f)}\,-\,\overline{P(f)}\;\big)\;\!\cap\;\overline{EP(f)}\,=\;\emptyset\,.
\vspace{-1mm}
$$
In addition, \,$\overline{R(f)}\cap\;\!\overline{EP(f)}\,\supset
\,\overline{P(f)}$\,follows from
${R(f)}\,\supset{P(f)}$ \,and\, ${EP(f)}\,\supset{P(f)}$.\, Thus we have\,
$\overline{R(f)}\;\!\cap\,\overline{EP(f)}\,=\,\overline{P(f)}$\;\!.
\end{proof}

\begin{example}\label{special map} For special graph maps,
\,by Theorem \ref{main theorem} or Theorem \ref{r structure} we can obtain some further
detailed information. For example, let $S^{\:\!1}$ be the unit
circle in the complex plane $\C$. For $n\in\N$\;\!, \,let
$T_n=\{z\in\C:\,z^{\:\!n}\in[\:\!0,2^{\:\!n}]\:\!\}$ \,and let\,
$G_n=S^{\:\!1}\!\cup T_n$\;\!. Then any two circles in $G_n$
intersect. Hence,\vspace{0.3mm} \,from Theorem \ref{main theorem} we see that,
\,for any $f\in C^{\;\!0}(G_n)$\;\!, \;if
\,$\overline{R(f)}\ne\overline{P(f)}$\, then there exist a unique
connected component \,$U$ of \,$G-\overline{EP(f)}$\;\!, \,a
unique strongly $f$-invariant connected closed set $X$,\, a
circle\, $C$ in \,$G$ and a unique minimal set $M$\, of $f$ such
that \,$U\cap\,R(f)\ne\,\emptyset$\, and\, $M\subset C\subset
X\subset U$. \,By Theorem \ref{r structure}, \,we have
\,$\overline{R(f)}=\overline{P(f)}\cup M$. \,By Theorem \ref{eventual invariant}, \,this
minimal set $M$\, is totally minimal.
\end{example}

Noting that
$AP(f)=\bigcup\:\{M:M$ is a minimal set of $f{\,}\}$ and
$P(f)\subset AP(f)\subset R(f)_{\;\!}$, by Theorem \ref{r structure}, we
get \,$\overline{R(f)}\subset\!
AP(f)_{\:\!}\cup_{\:\!}\overline{P(f)}\subset\overline{R(f)}$\;\!. Then we have

\begin{thm}\label{R closure}{\it Let $f:G\to G$ be a
graph map\;\!. Then
\,$\overline{R(f)}=AP(f)\cup\overline{P(f)}$}\;\!.
\end{thm}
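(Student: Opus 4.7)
The plan is to obtain this as a direct corollary of Theorem \ref{r structure}, leveraging the standard characterization of almost periodic points in terms of minimal sets. The forward inclusion $AP(f)\cup\overline{P(f)}\subset\overline{R(f)}$ is immediate from the chain of inclusions $P(f)\subset AP(f)\subset R(f)\subset\overline{R(f)}$ noted in the preliminaries, together with the fact that $\overline{R(f)}$ is closed. So the real content is the reverse inclusion.

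For $\overline{R(f)}\subset AP(f)\cup\overline{P(f)}$, I would invoke Theorem \ref{r structure} to write
\[
\overline{R(f)}=\overline{P(f)}\cup\Bigl(\bigcup_{i=1}^{n}M_i\Bigr),
\]
where each $M_i$ is a minimal set of $f$ (sitting inside $G-\overline{EP(f)}$, though we will not need that location here). Then I would recall the fact mentioned just before the statement, namely $AP(f)=\bigcup\{M:M\text{ is a minimal set of }f\}$, which is the standard equivalence (cited earlier from \cite{BC}) between $x\in AP(f)$ and $\overline{O(x,f)}$ being a minimal set. Consequently each $M_i\subset AP(f)$, and hence $\bigcup_{i=1}^{n}M_i\subset AP(f)$, which yields the desired inclusion $\overline{R(f)}\subset AP(f)\cup\overline{P(f)}$.

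There is no real obstacle here; the argument is essentially a one-line combination of Theorem \ref{r structure} with the standard identification of $AP(f)$ as the union of minimal sets. The only point worth double-checking, and the one I would state explicitly in the proof, is the case $n=0$ in Theorem \ref{r structure}, i.e.\ when $R(f)-\overline{P(f)}=\emptyset$; in that case $\overline{R(f)}=\overline{P(f)}$ and the identity is trivial. With that caveat handled, both inclusions combine to give $\overline{R(f)}=AP(f)\cup\overline{P(f)}$.
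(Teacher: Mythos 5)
Your proposal is correct and follows essentially the same route as the paper: the displayed text immediately preceding Theorem \ref{R closure} derives it exactly by combining Theorem \ref{r structure} with the identification $AP(f)=\bigcup\{M:M\text{ is a minimal set of }f\}$ and the chain $P(f)\subset AP(f)\subset R(f)$. Your explicit handling of the $n=0$ case is a minor extra care the paper leaves implicit, but the argument is the same.
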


For any graph map $f$, Hawete showed that
\,$\overline{R(f)}=\overline{AP(f)}$ (see \cite[Lemma 3.1]{Haw})\;\!.
\,In general, if $X$ is topological space and  $f\in C^{\;\!0}(X)$\;\!, \, then from the relation
\,$P(f)\subset AP(f)\subset R(f)$ \,we can easily get that the condition
\,$\overline{R(f)}=AP(f)\cup\overline{P(f)}$ \,implies
\,$\overline{R(f)}=\overline{AP(f)}$ \,and
\,$\overline{R(f)}=R(f)\cup\overline{P(f)}$\;\!. The following examples show that neither
\,$\overline{R(f)}=\overline{AP(f)}$ \,nor
\,$\overline{R(f)}=R(f)\cup\overline{P(f)}$ \,implies
\,$\overline{R(f)}=AP(f)\cup\overline{P(f)}$\;\!. Thus Theorem
\ref{R closure}\;\! is an essential improvement of\;\! \cite[Lemma 3.1]{Haw}
and \cite[Theorem 2.1]{MS2}.

\begin{example} (1)\ \ In \cite[Example
3.3]{MS3} the authors constructed an isometric homeomorphism $f$
from a complex Hilbert space $X$ to itself, \,which satisfies
$R(f)=X$ and \;\!$AP(f)=\,\emptyset$\;\!. \,For this $f$, \,we
have \,$\overline{R(f)}=R(f)\cup\overline{P(f)}$ \,but have
neither \,$\overline{R(f)}=AP(f)\cup\overline{P(f)}$ \,nor
\,$\overline{R(f)}=\overline{AP(f)}$\;\!.

\vspace{2mm}(2)\ \ Let \,$g:[\;\!0,1]\to[\;\!0,1]$ \,be the {\it
tent map}\, defined by \,$g(x)=\min\{2\:\!x,2-2\:\!x\}$ \,for all
\,$x\in[\;\!0,1]$\;\!, \,and let \,$h:S^1\to S^1$ \,be an
irrational rotation. Put $X=[\;\!0,1]\times S^1$. Then $X$ is a
cylinder.\, Define $f:X\to X$ by $f(x,y)=\big(g(x),h(y)\big)$
\,for any \,$(x,y)\in X$. Then \,$P(f)=\,\emptyset$\;\!, \,and
both \,${AP(f)}$ \,and \,${X-R(f)}$ \,are dense subsets of $X$.
\vspace{0.3mm} \,For this $f$, \,we have
\,$\overline{R(f)}=\overline{AP(f)}$ \,but have neither
\,$\overline{R(f)}=AP(f)\cup\overline{P(f)}$ \,nor
\,$\overline{R(f)}=R(f)\cup\overline{P(f)}$\;\!.
\end{example}

From \cite[Theorem 2.1]{MS2}\,, \vspace{0.6mm}we know that
\,$\overline{R(f)}-\overline{P(f)}=\big(R(f)\cup\overline{P(f)}_{\;}\big)-\overline{P(f)}=R(f)-\overline{P(f)}$\;\!.
\,By Theorem \ref{r structure}, \
$\overline{R(f)}-\overline{P(f)}\,=\,\bigcup_{\,i\,=\,1}^{\;n}M_i$ which is closed.
Hence we have

\begin{prop}\label{positive distance} \,\ {\it Let $f:G\to G$
be a graph map\;\!. \vspace{0.3mm}If \;${P(f)}\ne\,\emptyset$\,
and\, \,$R(f)-\overline{P(f)}\ne\,\emptyset$\,,\,\, then the
distance \;$d\big(\,{R(f)}-\overline{P(f)}\ ,\
\overline{P(f)\;\!\!}\;\big)\,=\,d\big(\,\overline{R(f)}-\overline{P(f)}\
,\ \overline{P(f)\;\!\!}\;\big)>0$}\;\!.
\end{prop}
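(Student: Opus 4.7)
The plan is to reduce the two distance claims to one using the equality $\overline{R(f)}-\overline{P(f)}=R(f)-\overline{P(f)}$ from \cite[Theorem 2.1]{MS2}, and then extract positive distance from the structural description already given in Theorem \ref{r structure}. Everything hinges on the observation that Theorem \ref{r structure} does not merely write $\overline{R(f)}-\overline{P(f)}$ as a union of minimal sets, but locates those minimal sets inside $G-\overline{EP(f)}$, which is the complement of a closed set containing $\overline{P(f)}$.

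First I would record the reduction. By \cite[Theorem 2.1]{MS2} we have $\overline{R(f)}=R(f)\cup\overline{P(f)}$, hence $\overline{R(f)}-\overline{P(f)}=R(f)-\overline{P(f)}$, so the two distances in the statement are equal and it suffices to prove either is strictly positive. Next I would invoke Theorem \ref{r structure} to produce minimal sets $M_1,\dots,M_n$ with $n\ge 1$ (nonempty because $R(f)-\overline{P(f)}\neq\emptyset$) such that
\[
\overline{R(f)}-\overline{P(f)} \;=\; \bigcup_{i=1}^{n} M_i \;\subset\; G-\overline{EP(f)}.
\]

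Now I would exploit compactness and disjointness. Since $G$ is a compact metric space and each $M_i$ is a minimal set of $f$, each $M_i$ is closed, hence compact. The inclusion $P(f)\subset EP(f)$ gives $\overline{P(f)}\subset\overline{EP(f)}$, so for each $i$ we have $M_i\cap\overline{P(f)}=\emptyset$. Thus $M_i$ and $\overline{P(f)}$ are disjoint compact subsets of $G$, which forces $d(M_i,\overline{P(f)})>0$. Because the union is finite,
\[
d\bigl(\overline{R(f)}-\overline{P(f)}\,,\,\overline{P(f)}\bigr)
\;=\;\min_{1\le i\le n} d(M_i,\overline{P(f)})\;>\;0,
\]
and the equality with $d(R(f)-\overline{P(f)},\overline{P(f)})$ established in the first step finishes the argument.

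There is no real obstacle here; the work has already been done in Theorem \ref{r structure}. The only point to handle with care is making sure one cites the right inclusion $\overline{R(f)}-\overline{P(f)}\subset G-\overline{EP(f)}$ (rather than just $G-EP(f)$), since it is the closedness of $\overline{EP(f)}$, combined with the finiteness of the collection $\{M_i\}$, that turns pointwise disjointness into a uniform positive gap.
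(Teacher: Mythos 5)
Your proposal is correct and follows essentially the same route as the paper: reduce via $\overline{R(f)}-\overline{P(f)}=R(f)-\overline{P(f)}$ from \cite[Theorem 2.1]{MS2}, then use Theorem \ref{r structure} to write this set as a finite union of minimal sets in $G-\overline{EP(f)}$, which is closed (compact) and disjoint from $\overline{P(f)}$, forcing a positive distance. The paper phrases the last step as ``the union is closed'' while you take a minimum over the finitely many compact pieces, but this is the same argument.
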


The following theorem is first given by Zhang, Liu and
Qin in \cite{ZLQ}, \,which can be also obtained as a corollary
of\;\! Proposition \ref{positive distance}\;\!.

\begin{thm}\label{dense periodic}(\cite[Theorem 2.8]{ZLQ})\ \,\ {\it Let $f:G\to G$ be a graph map\;\!. If
\;$\overline{R(f)}=\,G$ and \;$P(f)\ne\,\emptyset$\;\!, \;then\,
$\overline{P(f)}=\,G$}\;\!.
\end{thm}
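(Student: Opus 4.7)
The plan is to prove Theorem~\ref{dense periodic} by contradiction, reducing it directly to Proposition~\ref{positive distance} combined with the connectedness of the graph $G$ (recall that graphs are connected by definition in this paper).

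Assume for contradiction that $\overline{R(f)} = G$ and $P(f) \ne \emptyset$, but $\overline{P(f)} \ne G$. First I will verify that $R(f) - \overline{P(f)}$ is nonempty. Since $\overline{R(f)} = G$, the set $R(f)$ is dense in $G$; meanwhile $G - \overline{P(f)}$ is a nonempty open subset of $G$, so it must meet the dense set $R(f)$, giving $R(f) - \overline{P(f)} = R(f) \cap (G - \overline{P(f)}) \ne \emptyset$.

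With $P(f) \ne \emptyset$ and $R(f) - \overline{P(f)} \ne \emptyset$ both in hand, Proposition~\ref{positive distance} applies, so I obtain $\delta := d\bigl(\overline{R(f)} - \overline{P(f)},\,\overline{P(f)}\bigr) > 0$. Using $\overline{R(f)} = G$, this reads $d\bigl(G - \overline{P(f)},\,\overline{P(f)}\bigr) \ge \delta > 0$.

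The closing step will invoke connectedness of $G$. The positive separation forces the open set $G - \overline{P(f)}$ to be closed as well: any $y \in \overline{G - \overline{P(f)}}$ is a limit of points whose distance from $\overline{P(f)}$ is at least $\delta$, hence $y \notin \overline{P(f)}$, i.e., $y \in G - \overline{P(f)}$. Thus $G$ splits as a disjoint union of two nonempty closed sets $\overline{P(f)}$ and $G - \overline{P(f)}$, contradicting the connectedness of $G$, and therefore $\overline{P(f)} = G$. I do not anticipate any genuine obstacle: the deep content has already been absorbed into Proposition~\ref{positive distance} (which in turn relies on the structure Theorem~\ref{main theorem}), and what remains is a soft topological deduction with no combinatorial subtlety.
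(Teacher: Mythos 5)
Your proof is correct and follows essentially the same route as the paper: both verify $R(f)-\overline{P(f)}\ne\emptyset$ from density, invoke Proposition~\ref{positive distance} to get a positive separation, and then derive a contradiction in one soft topological step. The only (cosmetic) difference is in that last step: you contradict the connectedness of $G$ by exhibiting $G-\overline{P(f)}$ as a nonempty proper clopen set, whereas the paper contradicts the density of $R(f)$ directly by producing a nonempty open set disjoint from $\overline{R(f)}$; both closings are valid.
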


\begin{proof} Assume to the contrary that\, $\overline{P(f)}\ne G$.
\,Then,\, by Proposition \ref{positive distance}\;\!,\, \vspace{0.5mm}there will be a
neighborhood \,$U$ of \,$\overline{P(f)}$ \;in $G$ \,such that
\,$U-\;\!\overline{P(f)}\ne\,\emptyset$ \,and
\,$U\cap\,\big(\;\!{R(f)}-
\;\!\overline{P(f)}\;\big)=\,\emptyset$\;\!.\vspace{0.5mm} \,But
these will imply that \,$\overline{R(f)}\ne G$. This is a contradiction.
\end{proof}

By \;\!Theorem \ref{main theorem}, \,for any \;\!$x\in
\overline{R(f)}-\overline{P(f)}={R(f)}-\overline{P(f)}$\;\!,
\vspace{0.3mm} \,there exist a component-cyclic strongly
$f$-invariant closed set \,$X_i\subset
G-\overline{EP(f)}\subset G-\overline{P(f)}$ \,and a circle
\,$C_{ij}\subset X_i$ \,such that
\,$x\in\;\!C_{ij}$. \;\!Then,  we
 obtain the following

\begin{prop}\label{r=p} {\it Let $f:G\to G$ be
a graph map \vspace{0.3mm} \,and \,$W$ be a subset of
\,$G-\overline{P(f)}$\;\!. \,If\; $W\cap\;\!C=\,\emptyset$ \,for
any circle \;$C\subset G-\overline{P(f)}$\;\!,\, then\;
$W\cap\,\overline{R(f)}=\,\emptyset$\;\!. \vspace{0.3mm}
\;Specially, \;if \;$W$ is a connected component of
\;$G-\overline{P(f)}$\, which contains no circle,\, \vspace{0.4mm}
then \;$W\!\cap\,\overline{R(f)}=\,\emptyset\;;$ \;if
\;$G-\overline{P(f)}$\, contains no circle,\; then
\,$\overline{R(f)}=\overline{P(f)}$}\,.
\end{prop}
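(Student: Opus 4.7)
My plan is to reduce the first assertion to the structural content of Theorem \ref{main theorem}, via the identity $\overline{R(f)}=R(f)\cup\overline{P(f)}$ from \cite[Theorem 2.1]{MS2}. Since the hypothesis gives $W\subset G-\overline{P(f)}$, I would first rewrite $W\cap\overline{R(f)}=W\cap\bigl(R(f)\cup\overline{P(f)}\bigr)=W\cap\bigl(R(f)-\overline{P(f)}\bigr)$, so that everything reduces to showing $W\cap\bigl(R(f)-\overline{P(f)}\bigr)=\emptyset$.

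If $R(f)-\overline{P(f)}=\emptyset$ this is trivial. Otherwise, I would invoke the circle-containment observation made in the paragraph immediately preceding the statement: by Theorem \ref{main theorem}, for each $x\in R(f)-\overline{P(f)}$ there exists a circle $C_{ij}\subset G_{ij}\subset U_{ij}\subset G-\overline{EP(f)}\subset G-\overline{P(f)}$ with $x\in C_{ij}$. The standing assumption that $W$ meets no circle lying in $G-\overline{P(f)}$ then rules out $x\in W$, so $W\cap\bigl(R(f)-\overline{P(f)}\bigr)=\emptyset$, as desired.

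For the ``specially'' clauses, suppose first that $W$ is a connected component of $G-\overline{P(f)}$ containing no circle. Since any circle $C\subset G-\overline{P(f)}$ is connected, the connected-component property of $W$ inside the open set $G-\overline{P(f)}$ forces either $C\cap W=\emptyset$ or $C\subset W$; the latter is impossible since $W$ contains no circle. Thus $W\cap C=\emptyset$ for every such $C$ and the first part of the proposition applies. If $G-\overline{P(f)}$ contains no circle at all, I would take $W=G-\overline{P(f)}$ itself: the first part then forces $W\cap\overline{R(f)}=\emptyset$, i.e.~$\overline{R(f)}\subset\overline{P(f)}$, which combined with the trivial reverse inclusion yields $\overline{R(f)}=\overline{P(f)}$.

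The only mild obstacle I anticipate is the edge case $P(f)=\emptyset$, where Theorem \ref{main theorem} is not literally applicable; this is handled by a parallel appeal to Theorem \ref{eventual invariant}, which places every recurrent point inside the unique minimal set $M$, itself contained in a circle $Q$ of the eventual image $X$, so that the circle $Q\subset G=G-\overline{P(f)}$ plays the role of $C_{ij}$ in the argument above.
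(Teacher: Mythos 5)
Your proposal is correct and follows essentially the same route as the paper: the paper's entire argument is the observation (stated in the paragraph preceding the proposition) that Theorem \ref{main theorem} places every point of $R(f)-\overline{P(f)}$ on a circle $C_{ij}\subset G-\overline{EP(f)}\subset G-\overline{P(f)}$, combined with $\overline{R(f)}=R(f)\cup\overline{P(f)}$. Your explicit treatment of the edge case $P(f)=\emptyset$ via Theorem \ref{eventual invariant} is a small but welcome addition, since the paper invokes Theorem \ref{main theorem} without noting its hypothesis $P(f)\neq\emptyset$.
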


From\;\! Proposition \ref{r=p}\;\! we can directly derive
the following theorem, \,which is first given by Ye\;\! in\;\!
\cite{Ye}\;\!.

\begin{thm}\;(\cite[Theorem
2.6]{Ye})\ \ {\it Let $f:T\to T$ be a tree map. \,Then
\,$\overline{R(f)}=\overline{P(f)}$}\;\!.
\end{thm}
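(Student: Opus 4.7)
The plan is to derive this theorem as an immediate consequence of Proposition \ref{r=p}, which has been set up precisely for this purpose. First, I would recall the definition of tree given earlier in the paper: a tree is a graph containing no circle. Consequently $T$ itself, and a fortiori every subset of $T$, contains no circle; in particular $T-\overline{P(f)}$ contains no circle.

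Second, I would apply the last clause of Proposition \ref{r=p} with the ambient graph $G$ taken to be $T$. That clause asserts that if $G-\overline{P(f)}$ contains no circle, then $\overline{R(f)}=\overline{P(f)}$. Since the hypothesis is satisfied in our situation by the previous paragraph, the conclusion $\overline{R(f)}=\overline{P(f)}$ follows directly. For completeness one may note that the reverse inclusion $\overline{P(f)}\subset\overline{R(f)}$ is trivial from $P(f)\subset R(f)$, but this inclusion is not actually needed, as Proposition \ref{r=p} already delivers the full equality.

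There is no real obstacle at this stage: all of the work has been front-loaded into Theorem \ref{main theorem} and its corollary Proposition \ref{r=p}, which together imply that any recurrent point lying outside $\overline{P(f)}$ must be contained in one of the circles $C_{ij}\subset G-\overline{EP(f)}\subset G-\overline{P(f)}$. The only thing to recognize is that the tree hypothesis eliminates the possibility of any such circle, after which the theorem follows in one line. Thus the only step that requires any thought at all is matching the definition of \emph{tree} to the hypothesis ``$G-\overline{P(f)}$ contains no circle'' of Proposition \ref{r=p}.
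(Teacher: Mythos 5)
Your proposal is correct and matches the paper's own derivation exactly: the paper states that Ye's theorem follows ``directly'' from Proposition \ref{r=p}, and your argument---a tree contains no circle by definition, hence $T-\overline{P(f)}$ contains no circle, so the last clause of Proposition \ref{r=p} gives $\overline{R(f)}=\overline{P(f)}$---is precisely that one-line deduction.
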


\begin{prop}\label{intersect boundary} {\it Let $f:G\to G$ be
a graph map\;\!,\, and \,$A=[x,y]$ be an arc in\; $G$.

\vspace{0.5mm}$(1)$\ \ If \,$(x,y)_A\;\!\cap\,{\rm
Br}(G)=\,\emptyset$\, and\,
$A\;\!\cap\,\overline{P(f)}\;\!\ne\,\emptyset$\,,\, then
\,$(x,y)_A\;\!\cap\;\!\big(\;\!\overline{R(f)}-\overline{P(f)}\,\big)=\,\emptyset$\;\!.

\vspace{0.5mm}$(2)$\ \ If\; $x\in\overline{P(f)}$\, and\;
$y\in\overline{R(f)}-\overline{P(f)}$\,, \,then\; $(x,y]_A\cap
\;\!{\rm Br}(G)\ne\,\emptyset$\;\!.}
\end{prop}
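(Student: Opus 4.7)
The plan is to argue both parts by contradiction, using Theorem \ref{main theorem} to place any hypothetical point of $\overline{R(f)} - \overline{P(f)}$ inside a circle $C_{ij} \subset U_{ij}$, and then using the branching-free hypothesis on the arc $A$ to force $A$ itself to lie inside $C_{ij}$, which will contradict $A$ meeting $\overline{P(f)}$.

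For part $(1)$, I would suppose for contradiction that $z \in (x,y)_A \cap (\overline{R(f)} - \overline{P(f)})$. Since $\overline{R(f)} - \overline{P(f)} = R(f) - \overline{P(f)}$, Theorem \ref{main theorem} supplies indices $i, j$ with $z \in W_{ij} \subset C_{ij} \subset U_{ij}$, where $U_{ij}$ is open in $G$ and disjoint from $\overline{P(f)}$. Parameterizing $A:[0,1] \to G$ with $A(0) = x$, $A(1) = y$, I set $S = A^{-1}(C_{ij})$. Then $S$ is closed in $[0,1]$ by continuity of $A$ and closedness of $C_{ij}$. The key geometric observation is that $S \cap (0,1)$ is open in $(0,1)$: for any $t \in (0,1)$ with $A(t) \in C_{ij}$, the point $A(t)$ lies in $(x,y)_A$ and therefore satisfies ${\rm val}_G(A(t)) = 2$ (no branching in $(x,y)_A$, and an interior point of an arc has valence at least $2$), so a small neighborhood of $A(t)$ in $G$ is a $1$-manifold; because $C_{ij}$ is a circle through $A(t)$ and uses both local directions there, this neighborhood lies entirely in $C_{ij}$, giving an open neighborhood of $t$ inside $S$. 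Since $S \cap (0,1)$ is nonempty (containing the parameter of $z$), open and closed in the connected set $(0,1)$, I conclude $(x,y)_A \subset C_{ij}$. Taking closure gives $A \subset C_{ij} \subset U_{ij}$, contradicting $A \cap \overline{P(f)} \neq \emptyset$.

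For part $(2)$, I would apply the same argument at the endpoint $y$. Assume for contradiction that $(x,y]_A \cap {\rm Br}(G) = \emptyset$. Then $y$ is non-branching, and $y \in R(f) - \overline{P(f)}$ places $y$ in some $C_{ij} \subset U_{ij}$. The openness of $S \cap (0,1]$ in $(0,1]$ now holds at $t = 1$ as well (since $y$ has valence $2$ by the same reasoning), so $S \cap (0,1]$ is nonempty, open and closed in the connected set $(0,1]$, hence equal to $(0,1]$. Taking closure gives $A \subset C_{ij} \subset U_{ij}$, so $x \in U_{ij}$, contradicting $x \in \overline{P(f)}$. The only step requiring care is the local coincidence of $G$ with $C_{ij}$ at valence-$2$ points of $C_{ij}$, which is immediate from the $1$-manifold structure of $G$ at non-branching points combined with the fact that a circle uses both local directions at each of its points; in particular, no delicate analysis of the minimal set $W_{ij}$ itself is needed.
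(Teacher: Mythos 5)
Your proof is correct and follows essentially the same route as the paper: both arguments reduce to the fact (from Theorem \ref{main theorem}) that every point of $\overline{R(f)}-\overline{P(f)}=R(f)-\overline{P(f)}$ lies on a circle contained in $G-\overline{P(f)}$, and then use the absence of branch points on the arc to force the whole arc into that circle, contradicting its meeting $\overline{P(f)}$. The only difference is that the paper packages the first step as Proposition \ref{r=p} and leaves the local one-manifold/connectedness argument implicit, whereas you apply the structure theorem directly and spell that argument out.
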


\begin{proof} \ (1)\ \ Under the given
conditions,\, for any circle \,$C\!\subset
G-\overline{P(f)}$\;\!,\, we have
$(x,y)_A\;\!\cap\;C=\,\emptyset$\;\!. \,Let
$W=(x,y)_A-\overline{P(f)}$\;\!. \,By Proposition \ref{r=p}\,,\, we
have $W\cap\,\overline{R(f)}=\,\emptyset$\;\!. \,This means that
\,$(x,y)_A\;\!\cap\;\!\big(\;\!\overline{R(f)}-\overline{P(f)}\,\big)=\,\emptyset$\;\!.

\vspace{2mm}(2)\ \ If\, $x\in\overline{P(f)}$\; and\;\!
$(x,y]_A\;\!\cap\:{\rm Br}(G)=\,\emptyset$\;\!, \,then\, $y\notin
C$ for any circle \;$C\subset G-\overline{P(f)}$\;\!,\, and it
follows from Proposition \ref{r=p} that\,
$y\notin\overline{R(f)}-\overline{P(f)}$\;\!. Therefore, \,if\,
$x\in\overline{P(f)}$\; and\,
$y\in\overline{R(f)}-\overline{P(f)}$\; then we must have
\,$(x,y]_A\cap\;\!{\rm Br}(G)\ne\,\emptyset$\;\!.
 \end{proof}

\begin{example} \ Let\,
$G_n=\;\!S^{\:\!1}\!\cup\;\!T_n\subset\;\!\Bbb C$ \,be the same as
in Example \ref{special map}\;\!, \,and let $f\in C^{\;\!0}(G_n)$\;\!. \,If the
origin \,$0\,\in\,\overline{P(f)}$\;\!, \,and
\,$S^{\:\!1}\;\!\cap\;\overline{P(f)}\;\!\ne\,\emptyset$\;\!,
\,then \;$G_n-\overline{P(f)}$\, contains no circle, \,and from
Proposition \ref{r=p} \;\!we get
\,$\overline{R(f)}=\overline{P(f)}$\;\!.
\end{example}

\begin{thm}\label{irrational rotation}\,{\it Let $f:G\to G$ be a
graph map\;\!,\, and \;$U$ be a connected component of
\;$G-\overline{EP(f)}$\, with \;$U\cap\,R(f)\ne\,\emptyset$\,.
Then there exists \,$k\in\N$ \,such that\, $f^{\;\!k}(U)\subset
\;\!U$, \,and\;\! $f^{\;\!k\;\!}|\;\!U$ is topologically
semi-conjugate to an irrational rotation of\;\! the unit circle
$S^1$}\;\!.
\end{thm}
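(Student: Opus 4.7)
The plan is to identify the component $U$ with one of the components $U_{ij}$ produced by Theorem \ref{main theorem}, invoke Theorem \ref{eventual invariant}(5) on the subgraph $G_{ij}\subset U_{ij}$ to get a rotation semi-conjugacy on $G_{ij}$, and then extend that semi-conjugacy to all of $U_{ij}$ using the absorbing description of $U_{ij}$.

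First I would handle the degenerate case $P(f)=\emptyset$: then $EP(f)=\emptyset$, so $U=G$ and Theorem \ref{eventual invariant}(5) directly supplies the semi-conjugacy with $k=1$. Otherwise $P(f)\ne\emptyset$; since $\overline{P(f)}\subset\overline{EP(f)}$, the hypothesis $U\cap R(f)\ne\emptyset$ gives $U\cap\bigl(R(f)-\overline{P(f)}\bigr)\ne\emptyset$, so in particular $R(f)-\overline{P(f)}\ne\emptyset$ and Theorem \ref{main theorem} applies. By part (5) of Proposition \ref{sub main} each of the components $U_{ij}$ from the main theorem is itself a connected component of $G-\overline{EP(f)}$, and since $U$ is such a component that meets $\bigcup_{i,j}U_{ij}\supset R(f)-\overline{P(f)}$, I conclude $U=U_{ij}$ for some $i,j$. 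Setting $k=k_i$ and iterating the cyclic relation in Theorem \ref{main theorem}(3) gives $f^k(U)\subset U$.

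For the semi-conjugacy, the set $G_{ij}\subset U_{ij}$ from Theorem \ref{main theorem}(5) is a connected closed subgraph of $G$ and $f^{k_i}|G_{ij}$ is a graph map without periodic points, since $G_{ij}\subset G-P(f)$ and $P(f^{k_i})=P(f)$. Theorem \ref{eventual invariant}(5) then supplies an irrational rotation $R_\alpha$ of $S^1$ and a continuous surjection $\pi:G_{ij}\to S^1$ with $\pi\circ f^{k_i}=R_\alpha\circ\pi$ on $G_{ij}$. By Lemma \ref{connect closed}(3), for each $y\in U_{ij}=\mathrm{Ab}(G_{ij},f^{k_i})$ there exist $m(y)\in\Z_+$ and a neighborhood $Z$ of $y$ with $f^{jk_i}(Z)\subset G_{ij}$ for all $j\ge m(y)$; I would then define
\[
\tilde\pi(y)=R_\alpha^{-m(y)}\bigl(\pi(f^{m(y)k_i}(y))\bigr).
\]
The cocycle identity on $G_{ij}$ makes $\tilde\pi(y)$ independent of the admissible choice of $m(y)$, continuity of $\pi$ and $f^{m(y)k_i}$ on $Z$ makes $\tilde\pi$ continuous, the intertwining $\tilde\pi\circ f^{k_i}=R_\alpha\circ\tilde\pi$ follows by decrementing $m(y)$, and surjectivity is inherited from $\pi$ because $G_{ij}\subset U_{ij}$. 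The main technical obstacle is verifying that $\tilde\pi$ is globally consistent across points using different admissible values of $m(y)$; this resolves cleanly from iterating the intertwining identity on $G_{ij}$ between any two admissible exponents.
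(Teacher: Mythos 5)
Your proposal is correct and follows essentially the same route as the paper: identify $U$ with one of the components $U_{ij}={\rm Ab}(G_{ij},f^{k_i})$, apply Theorem \ref{eventual invariant} to the strongly invariant subgraph to get $\varphi$, and extend it to all of $U$ by the formula $h^{-n}\varphi f^{kn}$, with well-definedness from the cocycle identity and continuity from Lemma \ref{connect closed}(3). Your explicit treatment of the degenerate case $P(f)=\emptyset$ is a small (and harmless) addition the paper leaves implicit.
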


\begin{proof} Use the all notations in Theorem
\ref{main theorem}. From Theorem \ref{main theorem} we see that there exist \,$i\in\N_n$ \,and
\;\!$j\in\N_{k_i}$ \,such that\vspace{0.3mm}
\,$U=\,U_{ij}\!={\rm Ab}(X_{ij}\!,f^{\;\!k_i})$\;\!.
\,Let\;\! $k=k_i$\;\! and write\;\! $X\!=X_{ij}$.\vspace{0.5mm}
Then $f^{\;\!k}(X)=X$ and $f^{\;\!k}(U)\subset U$.\, By Theorem
\ref{eventual invariant}\;\!,\, $f^{\;\!k\;\!}|X$ is topologically semi-conjugate to some
irrational rotation\;\! $h:S^1\to S^1$, \,that is, \,there is a
continuous surjection\, $\varphi:X\to S^1$ such that\,
$h_{\,}{\varphi}={\varphi}_{\;\!}f^{\;\!k\;\!}|X$\;\!. \,By means
of\;\! ${\varphi}$ we define a map\, $\psi:U\to S^1$ as follows.
\,For any \,$x\in U$, \,taking an \,$n\in\Z_+$ \,such that
$f^{\;\!kn}(x)\in X$\;\!, \,and then we put\,
$\psi(x)=h^{-n}\;\!{\varphi}\;\!f^{\;\!kn}(x)$\;\!. \,Write
\,$x_n=f^{\;\!kn}(x)$\;\!. \,For any \,$i\in\N$\;\!, \,we have \
$h^{-n-i}\;\!{\varphi}\;\!f^{\;\!kn+ki}(x)\;\!=\,h^{-n-i}\;\!{\varphi}\;\!f^{\;\!ki}(x_n)
\;\!=\,h^{-n-i}\;\!h^{\,i}\;\!{\varphi}(x_n)\;\!=\,h^{-n}\;\!{\varphi}\;\!f^{\;\!kn}(x)$\;\!.
This means that the definition of\, $\psi(x)$ \,is independent of
the choice of \;\!$n$\;\!. \,So we obtain a map\, $\psi:U\to S^1$.
This\;\! $\psi$ \;\!is surjective, \,since\,
${\varphi}\!=\psi_{\;\!}|_{\;\!}X$ \;\!is surjective. \,By (3) of
\;\!Lemma \ref{connect closed}, \,for any\, $x\in U$, \,there exist an \,$m\in\N$\,
and an open neighborhood\, $Z$ \;\!of\, $x$ \;\!in $U$ such that
$f^{\;\!km}(Z)\subset X$\;\!, \,which implies that\,
$\psi_{\;\!}|Z=h^{-m}\;\!{\varphi}\;\!f^{\;\!km\;\!}|Z$ \,is
continuous. Thus\;\! $\psi$ \;\!is continuous. \,From\,
$h_{\;\!}\psi(x)=h\;\!h^{-n}\;\!{\varphi}\;\!f^{\;\!kn}(x)=
h^{-n+1}\;\!{\varphi}\;\!f^{\;\!kn-k}(f^k(x))=\,\psi f^{\;\!k}(x)$
we get\, $h_{\;\!}\psi=\,\psi f^{\;\!k\;\!}|\;\!U$. \,Hence
$f^{\;\!k\;\!}|\;\!U$ \,is topologically semi-conjugate to the
irrational rotation\;\! $h:S^1\to S^1$. Thus the theorem \;\!is
proven. \end{proof}

\begin{rem} \ Theorem \ref{irrational rotation}\, and $(5)$ of Theorem
\ref{eventual invariant} \,seem  similarly, \,but there are some differences between them. \,In $(5)$ of Theorem \ref{eventual invariant},
\,since the graph map $f:G\to G$ has no periodic point, \,there
exist a unique strongly $f$-invariant compact set $X$ and an\,
$n\in\Bbb N\cup\{0\}$\, such that $f^{\,n}(G)\subset X$.
\,However, \,in Theorem \ref{irrational rotation}, \,if $f$\, has periodic points,
\,then the $f^{\;\!k}$-invariant set $U$ is open in $G$ and is not
compact, \,and there is no\;\! $n\in\Bbb N$ \;\!such that
$f^{\,kn}(U)$ is contained in the strongly $f^{\;\!k}$-invariant
compact set $X\!=X_{ij}$.
\end{rem}

{\noindent\bf Acknowledgement.} {Jiehua Mai, Kesong Yan, and Fanping Zeng  are supported by NNSF of China (Grant No.
12261006) and  NSF of Guangxi Province (Grant No.
2018GXNSFFA281008);  Kesong Yan is also supported by NNSF of China
(Grant No. 12171175) and Project of Guangxi First Class Disciplines
of Statistics and Guangxi Key Laboratory of Quantity Economics; Enhui Shi is supported by NNSF of China (Grant No. 12271388).}

\end{document}